\documentclass[11pt]{amsart}

\usepackage[T1]{fontenc}
\usepackage{lmodern}
\usepackage{amsmath,amssymb,amsthm}
\usepackage{mathtools}
\usepackage{enumitem}
\usepackage{aliascnt}
\usepackage[protrusion=true,expansion=false]{microtype}
\usepackage[margin=1in,footskip=24pt]{geometry}
\usepackage[hidelinks]{hyperref}
\usepackage[nameinlink,noabbrev,capitalize]{cleveref}
\setlist[enumerate]{leftmargin=2.2em,itemsep=2pt,topsep=4pt}
\setlist[itemize]{leftmargin=2em,itemsep=2pt,topsep=4pt}
\allowdisplaybreaks
\newtheorem{theorem}{Theorem}[section]
\newaliascnt{proposition}{theorem}
\newtheorem{proposition}[proposition]{Proposition}
\aliascntresetthe{proposition}
\newaliascnt{lemma}{theorem}
\newtheorem{lemma}[lemma]{Lemma}
\aliascntresetthe{lemma}
\newaliascnt{corollary}{theorem}
\newtheorem{corollary}[corollary]{Corollary}
\aliascntresetthe{corollary}
\theoremstyle{definition}
\newaliascnt{definition}{theorem}
\newtheorem{definition}[definition]{Definition}
\aliascntresetthe{definition}
\newaliascnt{convention}{theorem}

\aliascntresetthe{convention}
\theoremstyle{remark}
\newaliascnt{remark}{theorem}
\newtheorem{remark}[remark]{Remark}
\aliascntresetthe{remark}
\newaliascnt{example}{theorem}
\newtheorem{example}[example]{Example}
\aliascntresetthe{example}

\newcommand{\NN}{\mathbb{N}}
\newcommand{\FF}{\mathcal{F}}
\newcommand{\HH}{\mathbb{H}}
\newcommand{\PhiPlus}{\Phi^{+}}
\newcommand{\rank}{\operatorname{rk}}
\newcommand{\height}{\operatorname{ht}}
\newcommand{\bottom}{\widehat{0}}
\newcommand{\wideP}{\widehat{P}}
\newcommand{\wideL}{\widehat{L}}
\newcommand{\qsum}[1]{\sigma(#1)}
\newcommand{\qbinom}[2]{\genfrac{[}{]}{0pt}{}{#1}{#2}_q}

\title[Positive \(q\)-Zeta formulas for Ferrers-cell posets]
  {Positive formulas for \(q\)-Zeta numerators of Ferrers-cell posets}
\author{Qihang Wang\textsuperscript{*}}
\address{School of Mathematical Sciences, Peking University, Beijing 100871,
  China}
\email{2501110049@stu.pku.edu.cn}
\author{Weiye Li\textsuperscript{*}}
\address{Department of Automation, Tsinghua University, Beijing 100084,
  China}
\email{liwy25@mails.tsinghua.edu.cn}
\thanks{Qihang Wang and Weiye Li contributed equally to this work.}
\date{}
\subjclass[2020]{Primary 06A07; Secondary 05A15, 17B22}
\keywords{root poset, $q$-Zeta numerator, flag $h$-vector,
  EL-labelling, ballot word, lower-semimodular lattice}
\hypersetup{
  pdftitle={Positive formulas for q-Zeta numerators of Ferrers-cell posets},
  pdfauthor={Qihang Wang and Weiye Li},
  pdfsubject={Algebraic combinatorics and finite posets},
  pdfkeywords={root poset, q-Zeta numerator, flag h-vector, EL-labelling, ballot word}
}
\begin{document}

\begin{abstract}
We give explicit positive formulas for Chapoton's \(q\)-Zeta numerators of the
Ferrers-cell posets
\(F_{\mathbf b}=\{(i,c):1\leq i\leq r,\ i\leq c\leq b_i\}\), where
\(b_1\geq\cdots\geq b_r\geq r\), and for every interval of their
minimum-augmented lattices.  A constructive signed EL-labelling expresses
the numerator as a descent enumerator over boundary-admissible path words.
A finite transfer-matrix recursion recovers the full multivariate
descent-set polynomial.

For trapezoidal boundaries, Gaussian-binomial formulas describe every
interval and every \(t\)-slice.  At \(q=1\), a Jacobi-polynomial transform
gives simple negative zeros, strict fixed-offset interlacing, and an explicit
arcsine push-forward limit.  We also obtain algebraic fixed-offset generating
functions and the growth rate \((1+\sqrt t)^2\) for \(t\geq0\).

The standard positive-root posets of types \(A_r\), \(B_r\), and \(C_r\)
are specializations, graded by root height minus one with Chapoton's fixed
denominator.  For types \(B_r\) and \(C_r\), this yields all-rank
coefficientwise positivity, the reversed-ballot formula, the specialization
\([t^k]\HH_{P_r,\rank}(1,t)=\binom{r-1}{k}^2\), and sharp slice degrees with
unique leading monomials.

The main results of this paper were obtained through a generative-AI workflow
using OpenAI GPT-5.6 Sol, Anthropic Claude Fable 5, and Grok 4.6.
OpenAI GPT-6 Astra was used for subsequent proof and citation review and manuscript
revision.  Further details appear in the disclosure at the end of the paper.
\end{abstract}

\maketitle
\enlargethispage{10pt}

\section{Introduction}
\label{sec:introduction}

Positive-root posets connect the combinatorics of finite posets with the
structure of crystallographic root systems.  Their antichains and order
ideals occur naturally in the theory of generalized Catalan numbers and
Coxeter arrangements; see, for example,
\cite{athanasiadis2004,panyushev2006}.  This makes root posets a natural test
class for rank-sensitive refinements of classical order invariants.

Chapoton's \(q\)-Zeta invariant refines the ordinary Zeta polynomial of a finite
poset by retaining rank information in weak-chain enumeration
\cite{chapoton2024}.  The associated rational generating series has a
bivariate numerator, but this numerator need not be coefficientwise
nonnegative for an arbitrary ranked poset.  For the positive-root poset of
type \(B\), Chapoton observed that the coefficients of the powers of \(t\)
appear to be nonnegative \(q\)-analogues of type-\(B\) Narayana numbers.
With the Lie-rank convention used here, the specialization proved below is
\([t^k]\HH_{P_r,\rank}(1,t)=\binom{r-1}{k}^2\); equivalently, its Narayana
parameter is \(r-1\).  The purpose of this paper is to prove an explicit
monomial formula that makes this pattern exact, uniformly in the Lie rank.
Thus, under the fixed normalization stated below, the paper gives an all-rank
proof of the type-\(B\) positivity statement highlighted in Chapoton's
discussion and identifies the larger Ferrers-cell mechanism behind it.
The polynomial \(\HH_{P_r,\rank}(1,t)\) is exactly the type-\(B\) Narayana
polynomial \(W_{r-1}(t)\) of \cite[Equation~(1.1)]{chen2010qlog}; the
variable denoted by \(q\) there corresponds to our \(t\).  Our additional
parameter \(q\) records chain-height weights in Chapoton's invariant.
Our contribution is an explicit positive
expansion for the \(q\)-Zeta numerator itself, together with the EL/ballot
mechanism that produces it.

This paper treats the full positive-root posets of standard Lie types \(B_r\)
and \(C_r\),
where \(r\) denotes the Lie rank.  The rank convention is part of the
statement: throughout, the posets are graded by root height minus one and
the \(q\)-Zeta numerator uses Chapoton's fixed denominator normalization.  The
fact that the ungraded root posets of types \(B\) and \(C\) are isomorphic is
known in root-poset combinatorics \cite{defant2021}; here we record an explicit
rank-preserving coordinate realisation adapted to this grading in
\cref{lem:cell-model-C}.

Let \(P_r=\PhiPlus(B_r)\) be ordered by nonnegative simple-root difference
and graded by
\[
  \rank(\alpha)=\height(\alpha)-1.
\]
The \(q\)-Zeta numerator \(\HH_{P_r,\rank}(q,t)\) is defined precisely in
\cref{sec:preliminaries}.  Put \(m=2r-2\).  Call a word in
\(\{L,R\}\) a ballot word if every prefix contains at least as many
\(R\)'s as \(L\)'s, and define
\[
  \mathcal B_r
  =
  \{w\in\{L,R\}^{m}:\text{the reversal of \(w\) is a ballot word}\}.
\]
For \(w=w_1\cdots w_m\), let
\[
  D(w)
  =
  \{j:w_j=L\text{ and }(j=1\text{ or }w_{j-1}=R)\};
\]
thus \(D(w)\) records the initial positions of the \(L\)-runs.  For
\(r=1\), these definitions use the empty word.  The main statement is the
following explicit coefficient formula.

\begin{theorem}
\label{thm:main}
For every integer \(r\geq 1\),
\[
  \HH_{P_r,\rank}(q,t)
  =
  \sum_{w\in\mathcal B_r}
  q^{\sum_{j\in D(w)}(j-1)}t^{|D(w)|}.
\]
In particular, every coefficient of the \(q\)-Zeta numerator of the standard
type-\(B_r\) positive-root poset is a nonnegative integer.
\end{theorem}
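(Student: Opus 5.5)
The plan is to follow the route announced in the abstract: realise \(P_r\) as a Ferrers-cell poset, put a signed EL-labelling on its minimum-augmented lattice, and read the numerator off as a descent enumerator over path words. Those words will then be identified with \(\mathcal B_r\).

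\textbf{Step 1: cell model and lattice.} Using the rank-preserving coordinate realisation of \cref{lem:cell-model-C}, I identify \(P_r\) with a Ferrers-cell poset \(F_{\mathbf b}\), with a boundary \(\mathbf b\) chosen so that \(\rank\) becomes a linear function of the coordinates \((i,c)\). Order ideals then correspond to lattice paths, and the augmented poset \(\wideP=P_r\cup\{\bottom\}\) is a lattice. Adjoining a maximum if necessary, I aim to show it is lower-semimodular, or at least admits a chain-lexicographic structure, with maximal chains encoded by words in \(\{L,R\}\). Each step moves either left or right along the boundary staircase, and the constraint \(i\le c\le b_i\) translates into a prefix condition.

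\textbf{Step 2: \(q\)-Zeta numerator as flag \(h\)-data.} Chapoton's numerator with the fixed denominator should expand as
\[
\HH(q,t)=\sum_{S} \beta(S)\, q^{\text{(rank weight of }S)}\, t^{|S|},
\]
where \(\beta\) is the flag \(h\)-vector indexed by rank sets of multichains. This step is the standard conversion from weak-chain counts to strict chains through a product of \(1/(1-q^{\bullet}x)\) factors. By EL-shellability, \(\beta(S)\) counts maximal chains whose label sequence has descent set \(S\). I will then verify that the labelling is EL, with each interval having a unique increasing chain, and that a descent occurs exactly at the start of an \(L\)-run. Since the rank along a maximal chain starting at \(\bottom\) equals the position index, a descent at position \(j\) contributes weight \(q^{j-1}\).

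\textbf{Step 3: identification with \(\mathcal B_r\).} The maximal chains of the type-\(B\) boundary (\(b_i\) trapezoidal, length \(m=2r-2\)) should be exactly the words whose reversal is a ballot word, because the cells must remain above the diagonal \(c\ge i\). A direct bijection check on the boundary admissibility condition should give this.

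\textbf{Main obstacle.} I expect the hardest part to be Step 2: reconciling Chapoton's fixed denominator and the height-minus-one grading with the descent weights, so that the exponent is exactly \(\sum_{j\in D(w)}(j-1)\) with no global shift and no signed corrections. The \enquote{signed} EL-labelling suggests that some cancellations must be shown to vanish for this boundary. If this fails, I would first handle small \(r\) directly and then prove the formula through the transfer-matrix recursion in \(r\), comparing it with the first-return decomposition of ballot words.
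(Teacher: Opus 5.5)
Your skeleton (cell model, adjoin \(\bottom\), EL-labelling, flag expansion, descents at \(L\)-run starts, bijection with \(\mathcal B_r\)) is the paper's, but the two steps that carry the proof are left open.

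First, you never specify a labelling, so \emph{verify that the labelling is EL} and \emph{a descent occurs exactly at the start of an \(L\)-run} have nothing to act on. Lower semimodularity does not substitute for it: through the upper-semimodular dual it could at best give \(\beta\geq0\), not the descent sets \(D(w)\). The missing idea is the explicit labelling \(\lambda(\bottom,(i,i))=i\), \(\lambda((i,c),(i-1,c))=-(i-1)\), \(\lambda((i,c),(i,c+1))=c+1\). \emph{Signed} refers to the signs of these labels, not to cancellations. Left labels are negative and increase along a left run, and right labels are positive and increase. Hence in each nonbottom interval the unique increasing, lexicographically first chain is all left moves followed by all right moves. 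From \(\bottom\) to \((j,d)\) it is the atom \((j,j)\) followed by right moves. Along a maximal chain the only descents are atom-then-\(L\) and \(R\)-then-\(L\), which is exactly \(D(w)\) in rank indexing.

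Second, the exponent \(j-1\). Chapoton's flag expansion applies to the bounded poset \(\wideP\), where a descent at rank \(j\) is weighted by \(q^{j}\). So \(q^{j-1}\) needs a justification; your outline names this as the main obstacle but does not supply it. The justification is the minimum-adjunction identity. A weak chain in \(\wideP\) is a block of copies of \(\bottom\) followed by a weak chain of \(P_r\), whence \(\FF_{\wideP}(q,t)=\FF_{P_r}(q,qt)/(1-t)\). With \(H=2r-2\), the product \((1-t)\prod_{a=0}^{H}(1-q^{a+1}t)=\prod_{a=0}^{H+1}(1-q^{a}t)\) is exactly the fixed denominator for \(\wideP\). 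This gives \(\HH_{\wideP}(q,t)=\HH_{P_r}(q,qt)\), and replacing \(t\) by \(t/q\) turns \(q^{\qsum{S}}\) into \(q^{\qsum{S}-|S|}\). Every term is a positive monomial, so there are no cancellations to control.

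Step 3 also puts the ballot condition in the wrong place. From the atom \((i,i)\) the cell after a prefix is \((i-L_k,i+R_k)\), so \(c\geq i\) and \(1\leq i-L_k\leq r\) hold automatically. The only binding constraint is the upper boundary \(c\leq 2r-i\), i.e.\ \(R_k-L_k\leq 2(r-i)\). Every maximal chain ends at \((1,2r-1)\), so this holds with equality for the whole word. The prefix inequalities therefore say exactly that every suffix has nonnegative \(R\)-minus-\(L\) balance, which is the ballot condition on the reversal. For the inverse map you must also recover the starting atom, which the word does not record: \(i=r-b/2\), where the final balance \(b\) is even and lies in \([0,2r-2]\).

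Your fallback is not an independent route either. The paper's transfer matrix is a recursion in path length that presupposes the word model; it is not a recursion in \(r\) for the weak-chain numerator.

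Smaller points: for \(P_r=\PhiPlus(B_r)\) the realisation to use is \cref{lem:cell-model}, not the type-\(C\) \cref{lem:cell-model-C}. \(P_r\) already has the unique maximum \((1,2r-1)\), so nothing needs adjoining at the top. The lattice paths here are maximal chains of the cell lattice, not order ideals of \(P_r\).
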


The type-\(A_r\) companion is the rectangular specialization of the Ferrers
theorem (\cref{cor:type-A}), while the identical formula for type \(C_r\)
follows from the rank-preserving isomorphism in
\cref{lem:cell-model-C}; see \cref{cor:type-C}.
At \(q=1\), the type-\(A\) sequence itself has a rational rank generating
function, a second-order recurrence, and strictly interlacing negative zeros
(\cref{cor:type-A-spectrum}); its normalized zero measures converge to the
same density as the type-\(B/C\) sequence.

\begin{example}
The first three numerators are
\[
  \HH_{P_1,\rank}(q,t)=1,\qquad
  \HH_{P_2,\rank}(q,t)=1+t,
\]
and
\[
  \HH_{P_3,\rank}(q,t)
  =1+(2+q+q^2)t+q^2t^2.
\]
Thus the formula refines the positivity statement even in small rank: the
power of \(q\) records the positions at which new left runs begin.
\end{example}

The proof has three structural steps.  First, we identify \(P_r\) with a
trapezoidal cell poset whose order and rank are coordinatewise
(\cref{sec:cell-model}).  Second, we show that adjoining a new minimum
changes the \(q\)-Zeta numerator by the exact substitution \(t\mapsto qt\)
(\cref{sec:augmentation}).  Third, we equip the augmented cell poset with an
EL-labelling by signed endpoint labels.  Its maximal chains are precisely the
words in \(\mathcal B_r\), and their descents are the sets \(D(w)\)
(\cref{sec:lattice,sec:main-proof}).  We also prove that the augmented poset
is a lower-semimodular lattice.  For the descent formula we use the explicit
EL-labelling directly; no labelling-existence consequence of
semimodularity is invoked.  Before taking the rank-weighted specialization,
the same argument gives a multivariate descent-set polynomial for every
Ferrers shape and every interval (\cref{prop:ferrers-multivariate-refinement}).
The latter polynomial is generated by an explicit finite transfer matrix,
so the theorem is also algorithmically effective for arbitrary boundaries
(\cref{prop:ferrers-transfer-matrix}).
The short criterion in \cref{cor:r-label-positivity} records the general
reason that this construction implies \(q\)-Zeta positivity; the Ferrers theorem
then supplies a concrete, boundary-parametrized family of such labellings.

The formula also yields a coefficientwise shape-monotonicity principle: with
top width fixed, relaxing the lower Ferrers boundary enlarges both the global
and every common-endpoint local word set.
The first \(t\)-slice admits a closed \(q\)-integer formula, so the initial
descent layer reads the Ferrers boundary row by row.
At the opposite extreme, the \(t^{r-1}\)-slice is a \(q\)-generating function
of strict row-position sequences and becomes a Gaussian binomial for
trapezoidal boundaries.
The local interval formula likewise expresses the coefficient at the
upper bound on descent number as a flagged strict sequence using only the
rows visited by that interval.  These upper-degree slices can vanish for
general boundaries; the formulas below also give their nonvanishing criteria.
At fixed top width, every intermediate slice also admits a closed
coefficientwise upper envelope from the rectangular Ferrers boundary
(\cref{cor:ferrers-envelope}).

For the two-parameter trapezoidal family we additionally give a finite
Gaussian-binomial endpoint decomposition for the complete numerator, using
the classical one-boundary descent--major-index formula
(\cref{thm:trapezoid-endpoint}).
More strongly, every interval with nonbottom endpoints admits a closed
Gaussian-binomial formula depending only on its displacement and boundary
slack (\cref{thm:trapezoid-interval-endpoint}); at \((q,t)=(1,1)\) this becomes an
explicit ballot-number difference.
At \(q=1\) the whole local \(t\)-profile is an explicit
ballot--Narayana difference, and it reduces to a binomial product when the
boundary is inactive (\cref{cor:trapezoid-interval-qone-profile}).
Principal intervals from the adjoined minimum are handled by a finite sum of
the same endpoint contributions, so the trapezoidal lattice has a closed
formula on every interval (\cref{thm:trapezoid-principal-endpoint}).
At \(t=1\) this yields an explicit alternating \(q\)-binomial specialization
(\cref{cor:trapezoid-q-total}).
The first slice gives an exact boundary-deficit formula and shows that this
envelope is rigid: equality already forces the boundary to be rectangular
(\cref{cor:ferrers-envelope-rigidity}).

Finally, the same initial-descent analysis determines the characteristic
polynomial of every augmented Ferrers lattice: it depends only on the number
of rows and the top width, not on the intermediate boundary
(\cref{cor:ferrers-characteristic}).

This combinatorial argument proves \cref{thm:main} for every rank.
Independent finite checks are described in \cref{sec:computational-checks}.

Besides positivity, the formula determines the total weight at \((1,1)\),
the sharp \(t\)-degree, the leading \(t\)-coefficient, and the specialization
\([t^k]\HH_{P_r,\rank}(1,t)=\binom{r-1}{k}^2\)
(\cref{cor:sharp-degree}).  It also gives a sharp degree and unique leading
monomial in every fixed \(t^k\)-slice (\cref{cor:slice-degree}).  A further flag interpretation gives a three-rank
Euler-characteristic refinement
(\cref{cor:three-rank}).
The same \(q=1\) specialization has a stronger spectral consequence: its
negative zeros strictly interlace from one Lie rank to the next
(\cref{cor:rank-interlacing}).
The entire rank sequence is governed by the algebraic generating function
and three-term recurrence in \cref{cor:rank-generating}.
As the Lie rank tends to infinity, the associated zero measures converge to
an explicit density on the negative real axis
(\cref{cor:zero-distribution}).

\section{Preliminaries and conventions}
\label{sec:preliminaries}

\subsection{Weak chains and the \texorpdfstring{\(q\)}{q}-Zeta numerator}

Let \(P\) be a finite graded poset with a unique maximum and rank function
\(\rho\colon P\to\{0,\ldots,H\}\).  For \(m\geq 0\), set
\[
  A_m(P,\rho;q)
  =
  \sum_{x_1\leq\cdots\leq x_m}
  q^{\rho(x_1)+\cdots+\rho(x_m)},
\]
where the sum for \(m=0\) consists of the empty chain and equals \(1\).
Chapoton's Lemma~1.1 identifies
\(A_m(P,\rho;q)=\mathsf{Z}_{P,\rho}([m+1]_q)\) for every \(m\geq 1\).
For the empty-chain term, Chapoton's Lemma~2.2 gives
\(\mathsf{Z}_{P,\rho}([1]_q)=\chi(P)=1\) whenever \(P\) has a unique
maximum, so the same indexing also holds for \(m=0\).

\begin{definition}
\label{def:qzeta-series}
The weak-chain \(q\)-Zeta series is
\[
  \FF_{P,\rho}(q,t)
  =
  \sum_{m\geq 0} A_m(P,\rho;q)t^m.
\]
In the normalization of \cite{chapoton2024}, its numerator is the polynomial
\(\HH_{P,\rho}(q,t)\) determined by
\begin{equation}
\label{eq:numerator-definition}
  \FF_{P,\rho}(q,t)
  =
  \frac{\HH_{P,\rho}(q,t)}
       {\prod_{a=0}^{H}(1-q^a t)}.
\end{equation}
\end{definition}

We use \(\NN=\{0,1,2,\ldots\}\).  Thus
\(\HH_{P,\rho}\in\NN[q,t]\) means coefficientwise nonnegativity, not merely
nonnegativity after specializing \(q\) or \(t\).

\subsection{Flag numbers and
  \texorpdfstring{\(R\)}{R}-labellings}

Let \(Q\) be a bounded graded poset of ranks \(0,\ldots,H+1\).
For \(S\subseteq[H]=\{1,\ldots,H\}\), let
\(\alpha_Q(S)\) be the number of chains containing both endpoints whose
internal set of ranks is exactly \(S\).  Its flag \(h\)-number is
\begin{equation}
\label{eq:flag-h-definition}
  \beta_Q(S)
  =
  \sum_{T\subseteq S}(-1)^{|S|-|T|}\alpha_Q(T).
\end{equation}
Write
\[
  \qsum{S}=\sum_{s\in S}s.
\]

In this paper, an \(R\)-labelling uses a totally ordered label set and has
a unique strictly increasing maximal chain in every interval.  It is an
\emph{EL-labelling} if that chain is also lexicographically first among the
maximal chains of the interval
\cite[Definition~2.1 and Proposition~2.5]{bjorner1980}.  For a chain with label
sequence \(\lambda_1,\ldots,\lambda_{H+1}\), its descent set is
\(\{p\in[H]:\lambda_p\geq\lambda_{p+1}\}\), the complement of its strict
ascents.  Along each chain of the explicit signed labellings below the labels
are distinct, so these descents are precisely the strict decreases.

The following two standard facts are the only non-elementary poset inputs.

\begin{theorem}[Chapoton's flag expansion]
\label{thm:flag-expansion}For a bounded graded poset \(Q\) of ranks \(0,\ldots,H+1\), equipped with
its rank function,
\begin{equation}
\label{eq:bounded-flag-expansion}
  \HH_{Q,\rank}(q,t)
  =
  \sum_{S\subseteq[H]}
  \beta_Q(S)q^{\qsum{S}}t^{|S|}.
\end{equation}
\end{theorem}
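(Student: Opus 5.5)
We need to prove Chapoton's flag expansion: for bounded graded \(Q\) with ranks \(0,\dots,H+1\), \(\HH_{Q,\rank}=\sum_S\beta_Q(S)q^{\qsum{S}}t^{|S|}\). Since \(Q\) has a unique maximum, the normalization in \eqref{eq:numerator-definition} applies with denominator \(\prod_{a=0}^{H+1}(1-q^at)\).

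The approach is to compute \(\FF_{Q,\rank}\) directly by grouping weak chains according to their underlying strict chains, then convert to the \(\beta\)-basis. A weak chain \(x_1\le\cdots\le x_m\) is a multichain. Its set of distinct elements is a strict chain \(C\), and each element of \(C\) occurs at least once. Every strict chain can be written as \(C\subseteq\{\bottom\}\cup C'\cup\{\widehat 1\}\), where \(C'\) is a chain of internal elements with rank set \(T\subseteq[H]\); the bottom and top are either used or not. Summing over all multiplicities, each element of rank \(a\) contributes a factor \(1/(1-q^at)\) if it may be absent, or \(q^at/(1-q^at)\) if it must appear. Hence
\[
  \FF_{Q,\rank}(q,t)=\sum_{T\subseteq[H]}\alpha_Q(T)\,\frac{1}{1-t}\cdot\frac{1}{1-q^{H+1}t}\prod_{s\in T}\frac{q^st}{1-q^st}.
\]
Here \(\alpha_Q(T)\) counts the internal chains with rank set \(T\).

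Next, multiply by the full denominator \(\prod_{a=0}^{H+1}(1-q^at)\). The term for \(T\) becomes
\[
  \alpha_Q(T)\prod_{s\in T}q^st\prod_{s\in[H]\setminus T}(1-q^st).
\]
Expanding the second product over subsets \(U\subseteq[H]\setminus T\) and setting \(S=T\cup U\) gives the contribution
\[
  \alpha_Q(T)(-1)^{|S|-|T|}q^{\qsum{S}}t^{|S|}.
\]
Summing over \(T\subseteq S\) yields exactly \(\beta_Q(S)\) by \eqref{eq:flag-h-definition}, which proves \eqref{eq:bounded-flag-expansion}. The empty multichain, with \(m=0\), is included because every factor \(1/(1-x)\) counts multiplicity zero, consistent with \(A_0=1\).

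The only delicate step is the bookkeeping in the first display. Each weak chain must be counted exactly once, so internal elements must be forced to occur (giving factors \(q^st/(1-q^st)\)), while the two bounds are optional (giving \(1/(1-t)\) and \(1/(1-q^{H+1}t)\)). One must also check that the rank weight of a weak chain is the sum of \(q^{\rank}\) over its entries with multiplicity, which is exactly what the geometric series encode. Everything else is inclusion–exclusion.
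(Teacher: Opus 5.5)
Your proof is correct. The paper gives no argument of its own for this statement. It imports the identity as Chapoton's Theorem~A.2, translating his index set via \(H'=H+1\), and relies on his Lemmas~1.1 and~2.2 to identify his \(\mathsf{Z}_{P,\rho}([m+1]_q)\) with the weak-chain counts \(A_m\), including the case \(m=0\).

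Your route is a self-contained derivation straight from the paper's weak-chain definition:
\begin{itemize}
\item group multichains by their support chain and sum the multiplicities as geometric series;
\item use boundedness to split off independent optional factors \(1/(1-t)\) and \(1/(1-q^{H+1}t)\) for \(\bottom\) and \(\widehat 1\);
\item clear the fixed denominator and expand \(\prod_{s\in[H]\setminus T}(1-q^st)\), which is exactly the Boolean inversion defining \(\beta_Q\).
\end{itemize}
This avoids the \(\mathsf{Z}\)-polynomial identifications entirely. It also shows plainly why the fixed, not necessarily reduced, denominator is the right normalization, with \(A_0=1\) appearing as the empty support chain. It makes visible that gradedness is used only through two facts: ranks strictly increase along chains, and internal elements have ranks in \([H]\). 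The citation, in exchange, is shorter and keeps the paper aligned with Chapoton's framework.

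One hypothesis should be stated explicitly. Splitting off the two optional endpoint factors requires \(\bottom\neq\widehat 1\), that is, \(H\geq 0\). For a one-point \(Q\) the two factors coincide, and your formula would give \(1/(1-t)^2\) instead of the correct \(1/(1-t)\). The indexing by \([H]\) already presupposes \(H\geq0\), and the paper treats singleton intervals separately, so this only amounts to stating the range.
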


\begin{theorem}[\(R\)-labellings and descents]
\label{thm:r-labelling}
If a bounded graded poset \(Q\) has an \(R\)-labelling, then
\begin{equation}
\label{eq:descent-count}
  \beta_Q(S)
  =
  \#\{\text{maximal chains of \(Q\) with descent set \(S\)}\}
\end{equation}
for every set \(S\) of internal ranks.  In particular,
\(\beta_Q(S)\geq 0\).
\end{theorem}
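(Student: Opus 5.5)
This is the classical result of Björner and Stanley on $R$-labellings, and I would reprove it by the standard inclusion--exclusion argument through rank-selected chains.

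\emph{Step 1: chains through a fixed rank set.} Fix $T=\{t_1<\cdots<t_k\}\subseteq[H]$. I claim that $\alpha_Q(T)$ equals the number of maximal chains of $Q$ whose descent set is contained in $T$. Take a chain $\bottom=y_0<y_1<\cdots<y_k<y_{k+1}=\widehat 1$ with $\rank(y_i)=t_i$. Each interval $[y_i,y_{i+1}]$ has a unique strictly increasing maximal chain, by the definition of an $R$-labelling. Concatenating these increasing chains gives a maximal chain of $Q$. Its labels increase strictly within each segment, so every descent occurs at a junction rank, that is, in $T$.

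Conversely, let $\mathfrak m$ be a maximal chain with descent set contained in $T$. Restricting $\mathfrak m$ to the ranks in $T$ gives a chain of type $T$. On each segment between consecutive elements of this restriction, $\mathfrak m$ has no descents, so that segment is strictly increasing. By uniqueness it is the increasing chain of that interval.

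The two constructions are therefore mutually inverse. This proves
\[
  \alpha_Q(T)=\sum_{U\subseteq T}\#\{\text{maximal chains with descent set }U\}.
\]

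\emph{Step 2: Möbius inversion on the Boolean lattice.} Applying inclusion--exclusion to the identity of Step 1 inverts it. Substituting into the definition \eqref{eq:flag-h-definition} of $\beta_Q(S)$ gives exactly \eqref{eq:descent-count}. Nonnegativity of $\beta_Q(S)$ is then immediate, since it is a count.

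\emph{Main point of care.} The delicate step is the convention on descents, $\lambda_p\geq\lambda_{p+1}$. Under this convention, "no descent" on a segment means the labels are strictly increasing there, which is exactly the condition in the uniqueness hypothesis. I would also check the edge cases $T=\emptyset$ and empty segments:
\begin{itemize}
\item when $T=\emptyset$ there is one chain of type $T$, namely $\bottom<\widehat 1$;
\item segments of length one are handled trivially, since a single cover relation is automatically an increasing chain of its interval.
\end{itemize}
Alternatively, one could simply cite \cite{bjorner1980}, as the paper does.
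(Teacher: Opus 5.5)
Your argument is correct and is the classical Stanley--Bj\"orner proof: concatenating the unique strictly increasing chains of the segments gives a bijection between chains of type $T$ and maximal chains with descent set contained in $T$, and Boolean inversion of $\alpha_Q(T)=\sum_{U\subseteq T}\#\{\text{maximal chains with descent set }U\}$ then yields $\beta_Q(S)$. The paper gives no proof of its own and instead cites Chapoton's Theorem~A.1 together with Stanley and Bj\"orner, which rest on exactly this argument; your handling of the strict-increase versus $\lambda_p\geq\lambda_{p+1}$ descent convention matches the paper's definitions.
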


The assertions in \cref{thm:flag-expansion,thm:r-labelling} are
Chapoton's Theorems~A.2 and~A.1, written in the rank indexing of the
present paper: a bounded poset of ranks \(0,\ldots,H+1\) has internal
ranks \([H]\), which is Chapoton's index set \(\{1,\ldots,H'-1\}\) with
\(H'=H+1\) \cite[Section~5 and Appendix~A]{chapoton2024}.
The descent machinery originates in
Stanley \cite[Theorem~3.1 and Corollary~3.2]{stanley1974};
the EL formulation is due to Bj\"orner
\cite[Definition~2.1, Proposition~2.5, and Theorem~2.3]{bjorner1980}.

\subsection{Type-\texorpdfstring{\(B\)}{B} roots}

In \(\mathbb{R}^r\), with standard basis \(e_1,\ldots,e_r\), we use the
simple roots
\[
  \alpha_i=e_i-e_{i+1}\quad(1\leq i<r),
  \qquad
  \alpha_r=e_r.
\]
The positive roots are
\[
  e_i-e_j\ (i<j),\qquad e_i,\qquad e_i+e_j\ (i<j).
\]
For positive roots \(\alpha,\beta\), the root-poset order is
\[
  \alpha\leq\beta
  \quad\Longleftrightarrow\quad
  \beta-\alpha\in\sum_{k=1}^{r}\NN\alpha_k.
\]

\section{A common cell model for the type-\texorpdfstring{\(B\)}{B} and
  \texorpdfstring{\(C\)}{C} root posets}
\label{sec:cell-model}

Define the trapezoidal set
\[
  C_r=\{(i,c):1\leq i\leq r,\ i\leq c\leq 2r-i\}.
\]
Here \(C_r\) denotes a cell poset; the phrase ``type \(C_r\)'' refers to
the root system, and its positive-root poset is written \(\Phi^+(C_r)\).
Give it the order
\begin{equation}
\label{eq:cell-order}
  (i,c)\leq(j,d)
  \quad\Longleftrightarrow\quad
  i\geq j\ \text{and}\ c\leq d.
\end{equation}

\begin{lemma}
\label{lem:cell-model}
The map
\begin{align}
  e_i-e_j&\longmapsto(i,j-1),\nonumber\\
  e_i&\longmapsto(i,r),\label{eq:root-cell-map}\\
  e_i+e_j&\longmapsto(i,2r-j+1)\nonumber
\end{align}
is a rank-preserving isomorphism from the positive-root poset
\(P_r=\PhiPlus(B_r)\) to \(C_r\).  Under this map,
\[
  \rank(i,c)=c-i.
\]
\end{lemma}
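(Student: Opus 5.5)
We prove \cref{lem:cell-model} by direct verification in three steps: the map is a bijection onto \(C_r\), the rank formula holds, and the map is an order isomorphism.

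\emph{Bijectivity.} For \(e_i-e_j\) with \(i<j\leq r\), the image \((i,j-1)\) has \(i\leq j-1\leq r-1\). For \(e_i\), the image \((i,r)\) has \(c=r\). For \(e_i+e_j\) with \(i<j\leq r\), the image \((i,2r-j+1)\) has \(r+1\leq 2r-j+1\leq 2r-i\). Hence row \(i\) of \(C_r\), namely \(\{(i,c): i\leq c\leq 2r-i\}\), is partitioned by these three families into \(c<r\), \(c=r\), and \(c>r\). The map is a bijection \(\PhiPlus(B_r)\to C_r\), and both sets have \(r^2\) elements.

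\emph{Rank formula.} We expand each root in simple roots. We have \(e_i-e_j=\alpha_i+\cdots+\alpha_{j-1}\) and \(e_i=\alpha_i+\cdots+\alpha_r\). Also \(e_i+e_j=(\alpha_i+\cdots+\alpha_r)+(\alpha_j+\cdots+\alpha_r)\), which has coefficient \(1\) on \(\alpha_i,\dots,\alpha_{j-1}\) and coefficient \(2\) on \(\alpha_j,\dots,\alpha_r\). The heights are therefore \(j-i\), \(r-i+1\), and \(2r-i-j+2\), so the ranks are \(j-i-1\), \(r-i\), and \(2r-i-j+1\). In each case this equals \(c-i\) for the image \((i,c)\).

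\emph{Order.} Since both posets are graded, it suffices to show that cover relations correspond; then the order relations, which are transitive closures of covers, correspond too. In \(C_r\) with order \eqref{eq:cell-order}, the covers of \((i,c)\) are \((i,c+1)\) and \((i-1,c)\), whenever these lie in \(C_r\). Each raises \(c-i\) by one. In \(\PhiPlus(B_r)\), \(\alpha\lessdot\beta\) exactly when \(\beta-\alpha\) is a simple root and \(\beta\) is a root. This standard fact follows here because every interval \([\alpha,\beta]\) in a root poset admits a chain adding one simple root at a time.

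We list the root covers by adding simple roots.
\begin{itemize}
\item From \(e_i-e_j\): adding \(\alpha_j\) (if \(j<r\)) gives \(e_i-e_{j+1}\), which maps to \((i,j)\), i.e.\ \(c\mapsto c+1\). Adding \(\alpha_r\) when \(j=r\) gives \(e_i\), which maps to \((i,r)\), again \(c+1\). Adding \(\alpha_{i-1}\) gives \(e_{i-1}-e_j\), which maps to \((i-1,j-1)\).
\item From \(e_i\): adding \(\alpha_{i-1}\) gives \(e_{i-1}\), which maps to \((i-1,r)\). Adding \(\alpha_r\) gives \(e_i+e_r\) when \(i<r\), which maps to \((i,r+1)\).
\item From \(e_i+e_j\): adding \(\alpha_{j-1}\) gives \(e_i+e_{j-1}\) if \(j-1>i\), which maps to \((i,c+1)\). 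Adding \(\alpha_{i-1}\) gives \(e_{i-1}+e_j\), which maps to \((i-1,c)\).
\end{itemize}
No other simple root can be added and still yield a root. Conversely, each cell cover lying in \(C_r\) arises this way; the boundary conditions \(c+1\leq 2r-i\) and \(i-1\geq 1\) match the root conditions \(j-1>i\) and \(i>1\).

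The only delicate step is this cover bookkeeping, especially at the transition columns \(c=r-1,r,r+1\) and along the right boundary \(c=2r-i\). There I would check case by case that no root cover is missed and no cell cover falls outside \(C_r\).
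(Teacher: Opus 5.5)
Your proof is correct, but it takes a different route from the paper.

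\textbf{The paper's route.} The paper never looks at covers of the root poset. It writes the simple-root coefficient vector $v_{i,c}$ of each cell: the indicator of $[i,c]$ when $c\le r$, and ones on $[i,m)$ with twos on $[m,r]$, where $m=2r-c+1$, when $c>r$. It then checks directly that $v_{i,c}\le v_{j,d}$ coordinatewise if and only if $i\ge j$ and $c\le d$. For this it uses the first nonzero coordinate, the coordinate $c$, the coefficient of $\alpha_r$, and where the block of twos starts. That argument is self-contained and uses only the definition of the root order.

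\textbf{Your route.} You reduce to covers and enumerate which $\gamma+\alpha_k$ are again roots. Your lists are right, including the transition at $j=r$ and the exclusions at $i=r$ and $j=i+1$, where $2e_r$ or $2e_i$ would appear. This gives the unit-move description of covers, which the paper only establishes later.

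\textbf{The lemma you need.} Your route rests on the fact that covers of a root poset are exactly simple-root additions. The direction from cell order to root order follows from your cover lists alone. The converse, root order to cell order, is exactly where that lemma is needed, and your stated justification only restates it. Either cite it or add the standard argument:
\begin{itemize}
\item If $\alpha<\beta$ and $\gamma=\beta-\alpha=\sum_k c_k\alpha_k$, then $0<(\gamma,\gamma)=\sum_k c_k(\gamma,\alpha_k)$ gives some $k$ with $c_k>0$ and $(\beta,\alpha_k)>(\alpha,\alpha_k)$.
\item Hence $\beta-\alpha_k$ or $\alpha+\alpha_k$ is a positive root lying in $[\alpha,\beta]$.
\item Induct on the height difference.
\end{itemize}

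\textbf{Two minor points.}
\begin{itemize}
\item What you use is finiteness, since the order is the reflexive--transitive closure of the covers; gradedness is not needed.
\item Your description of the covers of $C_r$ needs one line at the right boundary. If $c=2r-i<d$, then $j<i$, so $(i-1,c)$ is a cell lying strictly between $(i,c)$ and $(j,d)$.
\end{itemize}
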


\begin{proof}
For fixed \(i\), the three root families in \cref{eq:root-cell-map} fill,
respectively, columns \(i,\ldots,r-1\), column \(r\), and columns
\(r+1,\ldots,2r-i\).  The map is therefore a bijection onto \(C_r\).

We verify the order directly in simple-root coordinates.  If \(v_{i,c}(k)\)
is the coefficient of \(\alpha_k\) in the root represented by \((i,c)\),
then for \(c\leq r\),
\[
  v_{i,c}(k)=
  \begin{cases}
    1,&i\leq k\leq c,\\
    0,&\text{otherwise},
  \end{cases}
\]
whereas for \(c>r\), writing \(m=2r-c+1\),
\[
  v_{i,c}(k)=
  \begin{cases}
    0,&k<i,\\
    1,&i\leq k<m,\\
    2,&m\leq k\leq r.
  \end{cases}
\]
The cell condition gives \(m\geq i+1\).

Suppose \(i\geq j\) and \(c\leq d\).  If both columns are at most \(r\),
the support interval for \(v_{i,c}\) is contained in that for \(v_{j,d}\).
If \(c\leq r<d\), the latter vector is at least one on the entire interval
from \(j\) to \(r\).  If \(r<c\leq d\), both the support of the positive
entries and the terminal interval of entries equal to two for \(v_{i,c}\)
are contained in the corresponding intervals for \(v_{j,d}\).  Hence
\(v_{i,c}\leq v_{j,d}\) coordinatewise.

Conversely, assume \(v_{i,c}\leq v_{j,d}\).  Comparing the first nonzero
coordinate gives \(i\geq j\).  If \(c\leq r\) and \(d<r\), then \(c>d\)
would give \(v_{i,c}(c)=1>0=v_{j,d}(c)\); if \(d\geq r\), then
\(c\leq d\) is automatic.  If \(c>r\), the coefficient at \(\alpha_r\)
forces \(d>r\).  Were \(c>d\), the terminal interval of entries equal to
two for \(v_{i,c}\) would begin strictly earlier than that for \(v_{j,d}\),
again contradicting coordinatewise domination.  Thus \(c\leq d\), proving
\cref{eq:cell-order}.

Finally, the sum of the simple-root coefficients is \(c-i+1\) in both
column regions, so root height minus one is \(c-i\).
\end{proof}

\begin{lemma}[Type-$C$ companion cell model]
\label{lem:cell-model-C}
Let $P_r^{C}=\Phi^+(C_r)$ be the standard positive-root poset of type
$C_r$, with simple roots $\alpha_i=e_i-e_{i+1}$ for $i<r$ and
$\alpha_r=2e_r$.  The map
\begin{align}
  e_i-e_j&\longmapsto(i,j-1),\nonumber\\
  e_i+e_j&\longmapsto(i,2r-j),\label{eq:root-cell-map-C}\\
  2e_i&\longmapsto(i,2r-i)\nonumber
\end{align}
is a rank-preserving isomorphism from $P_r^{C}$ to the same trapezoidal
cell poset $C_r$, where the rank is again $c-i$.
\end{lemma}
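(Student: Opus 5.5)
We mirror the proof of \cref{lem:cell-model}, working in simple-root coordinates for type \(C_r\).

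\emph{Bijectivity.} Fix a row \(i\). The roots \(e_i-e_j\) with \(i<j\leq r\) fill columns \(i,\ldots,r-1\). The roots \(e_i+e_j\) with \(i<j\leq r\) fill columns \(2r-j\), that is, \(r,\ldots,2r-i-1\). Finally \(2e_i\) occupies column \(2r-i\). Row \(i\) is therefore filled exactly once, so the map is a bijection onto \(C_r\).

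\emph{Coefficient vectors.} We compute the coefficient vector \(u_{i,c}\) of the root sent to \((i,c)\).
\begin{itemize}
\item For \(c\leq r-1\) we have \(e_i-e_{c+1}=\alpha_i+\cdots+\alpha_c\), so \(u_{i,c}\) is the indicator of \([i,c]\).
\item Note that \(e_i=\alpha_i+\cdots+\alpha_{r-1}+\tfrac12\alpha_r\), and hence \(e_i+e_j=\alpha_i+\cdots+\alpha_{j-1}+2(\alpha_j+\cdots+\alpha_{r-1})+\alpha_r\) for \(i<j\).
\item For \(2e_i\) the coefficients are \(2\) on \([i,r-1]\) and \(1\) at \(r\).
\end{itemize}
Set \(m=2r-c\) for \(c\geq r\). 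Then \(u_{i,c}\) equals \(0\) below \(i\), equals \(1\) on \([i,m-1]\), equals \(2\) on \([m,r-1]\), and equals \(1\) at \(k=r\). The cell \((i,2r-i)\) gives \(m=i\), so this uniform description also covers \(2e_i\). The case \(c=r\) gives \(m=r\), an empty block of \(2\)'s, which is the root \(e_i+e_r\).

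The height is \(c-i+1\) in both regions. When \(c\leq r-1\) this is immediate. When \(c\geq r\) the sum is \((m-i)+2(r-m)+1=2r-m-i+1=c-i+1\). Hence \(\rank(i,c)=c-i\) and the map is rank-preserving.

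\emph{Order.} The order \(\beta-\alpha\in\sum\NN\alpha_k\) is coordinatewise domination \(u_{i,c}\leq u_{j,d}\), and we show this is equivalent to \(i\geq j\) and \(c\leq d\).

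Suppose first that \(i\geq j\) and \(c\leq d\). The support \([i,\cdot]\) of \(u_{i,c}\) starts no earlier than that of \(u_{j,d}\). If \(d\leq r-1\), both vectors are interval indicators with \([i,c]\subseteq[j,d]\). If \(c\leq r-1<d\), then \(u_{j,d}\geq1\) on all of \([j,r]\). If both \(c,d\geq r\), then \(m_c\geq m_d\), so the block of \(2\)'s of \(u_{i,c}\) is contained in that of \(u_{j,d}\), while the \(\alpha_r\) coefficients are equal. In every case \(u_{i,c}\leq u_{j,d}\).

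Conversely, suppose \(u_{i,c}\leq u_{j,d}\). Comparing the first nonzero coordinate gives \(i\geq j\). If \(c\geq r\), the coefficient of \(\alpha_r\) forces \(d\geq r\). If we also had \(c>d\), then \(m_c<m_d\), and the coordinate \(m_c\) would carry \(2\) for \(u_{i,c}\) but only \(\leq1\) for \(u_{j,d}\), a contradiction. If \(c\leq r-1\), then either \(d\geq r\) and \(c\leq d\) holds automatically, or \(d\leq r-1\) and the indicator comparison gives \(c\leq d\).

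\emph{Main obstacle.} The delicate point is the converse when \(c\geq r\), in particular ensuring that the case \(m=r\), where the block of \(2\)'s is empty, and the boundary cell \(2e_i\) are handled by the same argument. The uniform formula with \(m=2r-c\) is what makes this go through.
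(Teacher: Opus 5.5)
Your proof is correct and follows essentially the same route as the paper: column counting for bijectivity, coefficient vectors described via $m=2r-c$, a three-case coordinatewise domination check, and a converse that uses the first nonzero coordinate and then where the block of twos begins. The differences are cosmetic: you split at $c\leq r-1$ versus $c\geq r$, so column $r$ falls under the $m$-formula with an empty block of twos; you rule out $d\leq r-1$ with the $\alpha_r$-coefficient rather than the paper's coordinate $r-1$; and you write out the height computation that the paper only states.
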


\begin{proof}
For fixed $i$, the difference roots fill columns $i,\ldots,r-1$, the
roots $e_i+e_j$ fill columns $r,\ldots,2r-i-1$, and $2e_i$ fills the
last column $2r-i$.  Thus the map is bijective.  In simple-root
coordinates, a cell with $c\leq r$ has coefficients one on
$i\leq k\leq c$ and zero elsewhere.  For $c>r$, put $m=2r-c$; its
coefficient vector has value zero for $k<i$, value one for $i\leq k<m$,
value two for $m\leq k<r$, and value one at $k=r$.

Suppose $i\geq j$ and $c\leq d$.  If $d\leq r$, containment of the
intervals of ones is immediate.  If $c\leq r<d$, the vector for $(j,d)$
is at least one on every coordinate where the vector for $(i,c)$ is
nonzero, and it is one at the final coordinate.  If $r<c\leq d$, then
$2r-d\leq2r-c$; the support of the ones starts no later and the terminal
interval of twos starts no later, so coordinatewise domination again holds.
Conversely, coordinatewise domination first gives $i\geq j$ from the first
nonzero coordinate.  The case $c\leq r$ and $d<r$ then gives $c\leq d$ by
looking at coordinate $c$; if $d\geq r$ the inequality is automatic.  If
$c>r$, then $d\leq r$ is impossible because the left vector has a two at
coordinate $r-1$ whereas the right vector has at most one.  Thus $d>r$,
and $c>d$ would make the two-interval of the left vector start strictly
earlier, contradicting domination.  Hence $c\leq d$.

Finally, the sum of the simple-root coefficients is $c-i+1$, so height
minus one is $c-i$.
\end{proof}

\begin{corollary}
\label{cor:cell-ranks}
The poset \(P_r\) has ranks \(0,\ldots,H\), where \(H=2r-2\), and has a
unique maximum, represented by \((1,2r-1)\).
\end{corollary}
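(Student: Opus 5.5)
The plan is to read everything off the cell model of \cref{lem:cell-model}, which transports both the order and the rank of \(P_r\) to \(C_r\). Since the map is a rank-preserving isomorphism with \(\rank(i,c)=c-i\), it suffices to work in \(C_r\) and compute the range of \(c-i\) and the maximal elements there.

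\emph{Ranks.} For a cell \((i,c)\in C_r\) we have \(i\leq c\leq 2r-i\), so
\[
  0\leq c-i\leq 2r-2i\leq 2r-2,
\]
because \(i\geq 1\). Both extremes occur: \((1,1)\) (or any \((i,i)\)) has rank \(0\), and \((1,2r-1)\) has rank \(2r-2\). To see that every intermediate value \(k\in\{0,\ldots,2r-2\}\) is attained, take the cell \((1,1+k)\). It lies in \(C_r\) because \(1\leq 1+k\leq 2r-1\). Hence the set of ranks is exactly \(\{0,\ldots,H\}\) with \(H=2r-2\).

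\emph{Unique maximum.} For any \((j,d)\in C_r\) we have \(j\geq 1\) and \(d\leq 2r-j\leq 2r-1\). By \cref{eq:cell-order}, this gives \((j,d)\leq(1,2r-1)\). So \((1,2r-1)\) is the maximum, and it is unique since a maximum in a poset is unique. Under the inverse of \cref{eq:root-cell-map} it corresponds to \(e_1+e_2\), the highest root; this identification is not needed for the statement.

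The only point needing care is the degenerate case \(r=1\). There \(C_1=\{(1,1)\}\) and \(H=0\), and the argument above still applies verbatim. Beyond that, there is no real obstacle: the statement is an immediate bookkeeping consequence of \cref{lem:cell-model}.
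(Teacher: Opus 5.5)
Your proposal is correct and follows the paper's route: it reads the range of \(c-i\) and the top element directly off the cell model of \cref{lem:cell-model}. It is actually more complete than the paper's two-line proof, which only notes that \((1,2r-1)\) is the unique cell of top rank; you also check that this cell dominates every cell in the coordinate order, and that every intermediate rank is attained. One small slip in your optional aside: the maximum corresponds to \(e_1+e_2\) only for \(r\geq2\), while for \(r=1\) it is the image \((1,1)\) of \(e_1\).
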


\begin{proof}
The minimum possible value of \(c-i\) is zero.  The largest is
\((2r-1)-1=2r-2\), attained only at \((1,2r-1)\).
\end{proof}

\section{A two-parameter trapezoidal extension}
\label{sec:trapezoid-family}

The cell model extends beyond the symmetric boundary of the root poset.
We first give the more general construction, using the minimum-adjunction
identity proved independently in \cref{sec:augmentation};
\cref{sec:lattice,sec:main-proof} then detail its root-poset specialization.
Let \(s\geq 2r\geq2\) and define
\[
  T_{r,s}=\{(i,c):1\leq i\leq r,\ i\leq c\leq s-i\},
\]
ordered by
\[
  (i,c)\leq(j,d)\quad\Longleftrightarrow\quad i\geq j\ \text{and}\ c\leq d,
\]
and graded by \(\rho(i,c)=c-i\).  Adjoin a minimum \(\bottom\), and give
the new cells rank \(\widehat\rho(i,c)=c-i+1\).  Put \(m=s-2\), and let
\(\mathcal B_{r,s}\) consist of the words \(w\in\{L,R\}^{m}\) such that
\[
  \text{the reversal of \(w\) is ballot,}
  \qquad \#R(w)-\#L(w)\geq s-2r.
\]

For \(w=w_1\cdots w_m\), retain the run-start set
\[
  D(w)=\{j:w_j=L\text{ and }(j=1\text{ or }w_{j-1}=R)\}.
\]

\begin{theorem}[Trapezoidal-cell \(q\)-Zeta formula]
\label{thm:trapezoid-family}
For every \(s\geq2r\geq2\), the \(q\)-Zeta numerator of \(T_{r,s}\), with rank
\(\rho(i,c)=c-i\), is
\[
  \HH_{T_{r,s},\rho}(q,t)
  =\sum_{w\in\mathcal B_{r,s}}
    q^{\sum_{j\in D(w)}(j-1)}t^{|D(w)|}.
\]
In particular, it belongs to \(\NN[q,t]\).  For \(s=2r\), one has
\(\mathcal B_{r,2r}=\mathcal B_r\) and \(T_{r,2r}=C_r\).
\end{theorem}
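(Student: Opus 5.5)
Our plan follows the three-step structure announced in the introduction.

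First, adjoin the minimum \(\bottom\) to form \(\widehat T_{r,s}=T_{r,s}\cup\{\bottom\}\), with rank \(\widehat\rho\). This is a bounded graded poset of ranks \(0,\ldots,s-1\), so \(H+1=s-1\) and the internal ranks are \([s-2]=[m]\). The unique maximum is \((1,s-1)\). We invoke the minimum-adjunction identity of \cref{sec:augmentation}: adjoining a new bottom turns \(\HH_{T_{r,s},\rho}(q,t)\) into \(\HH_{T_{r,s},\rho}(q,qt)\) for \(\widehat T_{r,s}\). Combining this with \cref{thm:flag-expansion}, the target formula becomes
\[
\sum_{S\subseteq[m]}\beta(S)\,q^{\qsum{S}}t^{|S|}
=\sum_{w\in\mathcal B_{r,s}} q^{\sum_{j\in D(w)} j}\,t^{|D(w)|}.
\]
So it suffices to biject maximal chains of \(\widehat T_{r,s}\) with \(\mathcal B_{r,s}\), and to exhibit an \(R\)-labelling whose descent set on the chain of \(w\) is \(D(w)\), after matching index conventions. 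Then \cref{thm:r-labelling} finishes the argument. The shift \(j-1\mapsto j\) is exactly absorbed by the substitution \(t\mapsto qt\).

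Second, we describe the maximal chains. A chain starts at \(\bottom\) and covers to some minimal cell \((i,i)\). In \(T_{r,s}\), a cover either increases \(c\) by one (step \(R\)) or decreases \(i\) by one (step \(L\)), staying inside \(i\le c\le s-i\). The chain ends at \((1,s-1)\), which requires \(r-1\) more \(L\)'s than the starting row allows, in a sense made precise below. Encode the cover out of \(\bottom\) to \((i,i)\) so that the total word has length \(m\). Since the \(m\) internal ranks correspond to the cells along the chain, one natural encoding records the \(m\) cover steps among cells, from \((i_0,i_0)\) to \((1,s-1)\). That is a word with \(i_0-1\) letters \(L\) and \(s-i_0-1\) letters \(R\). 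The constraints are \(c\ge i\) (a ballot condition, read appropriately) and \(c\le s-i\), the boundary. We need to check that the set of such words, over all \(i_0\), is exactly \(\mathcal B_{r,s}\). The condition \(\#R-\#L\ge s-2r\) corresponds to \(i_0\le r\). Reading the path backwards from the top cell, the constraint \(c\le s-i\) corresponds to the reversed-ballot condition. The condition \(c\ge i\) is automatic on the reverse reading, or becomes the ballot condition, depending on the orientation. We will fix whichever orientation makes this an exact bijection, checking small cases against \(\HH_{P_3}\).

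Third, we define the signed endpoint labels. Label an \(R\)-cover by \(+c'\), where \(c'\) is the new column. Label an \(L\)-cover by \(-i'\), or a comparable signed value, ordered so that \(R\)-labels exceed \(L\)-labels. Label the cover \(\bottom\to(i,i)\) by a value making the chain descend into the first step exactly when that step is \(L\). With this choice, consecutive \(R\)'s ascend, consecutive \(L\)'s ascend (through increasing \(-i\)), \(L\to R\) ascends, and \(R\to L\) descends. The descents are then exactly the starts of \(L\)-runs, which is \(D(w)\) shifted by one position because of the initial edge.

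The main obstacle is verifying the \(R\)-labelling property on every interval \([x,y]\), including intervals containing \(\bottom\) where the first edge is variable. In an interval between cells, the increasing chain should be the one taking all \(L\)'s first and then all \(R\)'s. We must check that this chain stays inside the boundary \(c\le s-i\), which holds because decreasing \(i\) first only loosens the boundary. Uniqueness follows from the fact that any \(R\) followed by \(L\) is a descent. For intervals \([\bottom,y]\), the increasing chain should enter at the largest admissible minimal cell below \(y\) and then go up. We would prove uniqueness by a direct argument showing that any other entry point forces an \(R\)-before-\(L\) pattern or a descent at the first edge. If this fails, the fallback is to prove the EL property via lower semimodularity, but we expect the direct check to work.
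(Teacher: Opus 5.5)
Your plan is essentially the paper's proof: the same signed labels (the paper fixes $\lambda(\bottom,(i,i))=i$, $-(i-1)$ for a left move out of row $i$, and $c+1$ for a right move into column $c+1$), the same bijection of maximal chains with $\mathcal B_{r,s}$ via $R_k-L_k\leq s-2i$ with equality at the end, and the same combination of \cref{thm:flag-expansion,thm:r-labelling} with \cref{lem:minimum-shift}. When you write it out, correct three slips: (i) the increasing chain of $[\bottom,(j,d)]$ enters at the atom $(j,j)$ in the row of $(j,d)$, which is the admissible atom of \emph{smallest} index, since any $(k,k)$ with $k>j$ forces a left move and hence a descent; (ii) the descent set is exactly $D(w)$ with no shift, because rank $p$ compares the atom label (when $p=1$) or the label of $w_{p-1}$ with the label of $w_p$, and this unshifted set is precisely what the reduction in your first paragraph requires; and (iii) $c\geq i$ is automatic along every path, so only $c\leq s-i$ yields the reversed-ballot condition, with $\#R-\#L\equiv s\pmod 2$ making $i=(s-\#R+\#L)/2$ an integer for the inverse map.
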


\begin{proof}[Proof of \cref{thm:trapezoid-family}]
Adjoin a minimum to \(T_{r,s}\).  For nonbottom cells, the join and meet
are
\[
  (i,c)\vee(j,d)=(\min(i,j),\max(c,d)),
\]
and
\[
  (i,c)\wedge(j,d)=
  \begin{cases}
    (\max(i,j),\min(c,d)),&\max(i,j)\leq\min(c,d),\\
    \bottom,&\max(i,j)>\min(c,d).
  \end{cases}
\]
The inequalities \(c\leq s-i\) and \(d\leq s-j\) show that the proposed
join is a cell, while the same argument for the proposed meet applies when
it is nonbottom.  The rank is \(\widehat\rho=c-i+1\).  If the meet is
bottom, assume \(i\geq j\); then \(d<i\leq c\), and
\[
  \widehat\rho(i,c)+\widehat\rho(j,d)-\widehat\rho(j,c)=d-i+1\leq0.
\]
Thus the augmented poset is a lower-semimodular lattice.

Label its covers by
\[
  \lambda(\bottom,(i,i))=i,\qquad
  \lambda((i,c),(i-1,c))=-(i-1),\qquad
  \lambda((i,c),(i,c+1))=c+1.
\]
On any interval not starting at \(\bottom\), all available left labels are
negative and occur in increasing order, while all right labels are positive
and occur in increasing order.  Hence the unique increasing chain performs
all left moves before all right moves and is lexicographically first.  On an
interval from \(\bottom\), the increasing chain starts with the atom
\((j,j)\) and then uses only right moves.  This is an EL-labelling.

A maximal chain starts at \((i,i)\), uses \(i-1\) left moves and \(s-i-1\)
right moves, and ends at \((1,s-1)\).  If \(L_k,R_k\) count moves in a
prefix, the intermediate cell is \((i-L_k,i+R_k)\), and the upper boundary
condition is
\[
  R_k-L_k\leq s-2i.
\]
At the end equality holds.  Consequently the reversed word is ballot, with
final balance \(b=s-2i\geq s-2r\).  Conversely, if \(w\in\mathcal B_{r,s}\)
has final balance \(b\), then \(b\equiv s-2\equiv s\pmod 2\), so
\(i=(s-b)/2\) is an integer in \([1,r]\), and
the same inequalities keep the path inside \(T_{r,s}\).  This gives a
bijection between maximal chains and \(\mathcal B_{r,s}\).

The label comparison is identical to the root-poset case: descents occur at
rank \(1\) when the first move is \(L\), and thereafter exactly at positions
where \(R\) is followed by \(L\).  Thus the descent set is \(D(w)\).  Applying
the flag expansion to the EL-labelled augmented lattice and then the exact
minimum-adjunction shift from \cref{lem:minimum-shift} gives the displayed
formula.
\end{proof}

\subsection{\texorpdfstring{A closed endpoint formula for all \(t\)-slices}
  {A closed endpoint formula for all t-slices}}
\label{sec:trapezoid-endpoint-formula}

The ballot-word formula can be converted into a finite Gaussian-binomial
expression for the complete numerator, rather than only for its two extreme
slices.  We use the following standard one-boundary enumerator.  If
\(a,b,d\geq0\) and \(b\leq a+d\), let
\begin{equation}
\label{eq:one-boundary-enumerator}
 \Phi_{a,b}^{d}(x,q)
 =\sum_{\nu\geq0}x^{\nu}q^{\nu^2}
 \left(
 \qbinom{a}{\nu}\qbinom{b}{\nu}
 -q^{d}\qbinom{b-d+1}{\nu+1}
          \qbinom{a+d-1}{\nu-1}
 \right).
\end{equation}
For integer indices, we set \(\qbinom{N}{k}=0\) unless
\(0\leq k\leq N\), including when \(N<0\); for \(N\geq0\),
\(\qbinom{N}{0}=1\).  By the one-boundary descent--major-index formula
of Krattenthaler and Mohanty \cite[Corollary~2]{krattenthaler1993},
\(\Phi_{a,b}^{d}(x,q)\) is the generating function
\[
 \sum_{\pi}x^{\operatorname{des}(\pi)}q^{\operatorname{maj}(\pi)},
\]
where \(\pi\) runs over words with \(a\) letters \(L\) and \(b\) letters
\(R\) whose associated path satisfies \(R_k-L_k\leq d\) at every prefix,
\(\operatorname{des}(\pi)=|\{i:\pi_i=R,\pi_{i+1}=L\}|\), and
\(\operatorname{maj}(\pi)\) is the sum of the descent positions.

\begin{theorem}[Gaussian-binomial endpoint decomposition]
\label{thm:trapezoid-endpoint}
Put \(m=s-2\).  For \(1\leq a\leq r-1\), set
\[
 b=m-a,\qquad d=b-a=m-2a,
\]
and define
\begin{equation}
\label{eq:endpoint-contribution}
 \mathcal G_{a,b}(q,t)
 =\Phi_{a,b}^{d}(t,q)
 +(t-1)\Phi_{a-1,b}^{d+1}(qt,q).
\end{equation}
Then
\begin{equation}
\label{eq:trapezoid-endpoint-sum}
 \HH_{T_{r,s},\rho}(q,t)
 =1+\sum_{a=1}^{r-1}\mathcal G_{a,m-a}(q,t).
\end{equation}
Thus every \(t\)-slice of the trapezoidal \(q\)-Zeta numerator has an explicit
finite Gaussian-binomial expression.
\end{theorem}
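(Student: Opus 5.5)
We would obtain \eqref{eq:trapezoid-endpoint-sum} directly from \cref{thm:trapezoid-family}. We regroup the sum over \(\mathcal B_{r,s}\) according to the number \(a\) of letters \(L\), and then match each group with the Krattenthaler--Mohanty enumerator \(\Phi\) of \eqref{eq:one-boundary-enumerator}.

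\emph{Step 1: sorting by \(a\).} A word \(w\in\{L,R\}^m\) with \(a\) letters \(L\) has \(b=m-a\) letters \(R\) and final balance \(b-a=m-2a\). The condition \(m-2a\geq s-2r\) is equivalent to \(a\leq r-1\). For \(a=0\) the only word is \(R^m\), whose reversal is trivially ballot and for which \(D(w)=\emptyset\); it contributes the constant \(1\). For \(1\leq a\leq r-1\) we translate the ballot condition. The reversal of \(w\) is ballot exactly when every suffix of \(w\) has at least as many \(R\)'s as \(L\)'s. Subtracting a suffix from the whole word, this is equivalent to \(R_k-L_k\leq b-a=d\) for every prefix count \((L_k,R_k)\). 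This is precisely the one-boundary condition with \(d=m-2a\), and the hypothesis \(b\leq a+d\) holds with equality.

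\emph{Step 2: matching the statistic.} The run-start set \(D(w)\) consists of two kinds of positions. The first kind are positions \(i+1\) with \(w_i=R\) and \(w_{i+1}=L\); each contributes \(tq^{i}\), that is, one descent at position \(i\) in the sense of \(\operatorname{des}\) and \(\operatorname{maj}\). The second kind is position \(1\), present when \(w_1=L\), contributing \(tq^0=t\). Hence
\[
 \sum_{w}q^{\sum_{j\in D(w)}(j-1)}t^{|D(w)|}
 =\sum_{w}t^{\operatorname{des}(w)}q^{\operatorname{maj}(w)}
 +(t-1)\sum_{w_1=L}t^{\operatorname{des}(w)}q^{\operatorname{maj}(w)},
\]
where both sums on the right run over the words with \(a\) letters \(L\) satisfying the boundary condition. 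By \cite[Corollary~2]{krattenthaler1993}, the first sum on the right is \(\Phi^{d}_{a,b}(t,q)\).

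\emph{Step 3: words beginning with \(L\).} Write \(w=Lw'\). Then \(w'\) has \(a-1\) letters \(L\) and \(b\) letters \(R\). The prefix condition \(R_k-L_k\leq d\) for \(w\) becomes \(R'_k-L'_k\leq d+1\) for \(w'\), and here \(b\leq(a-1)+(d+1)\) again holds. Since \(w_1=L\), position \(1\) is not an \(RL\) descent of \(w\), so \(\operatorname{des}(w)=\operatorname{des}(w')\). Every descent of \(w'\) moves one step to the right in \(w\), so \(\operatorname{maj}(w)=\operatorname{maj}(w')+\operatorname{des}(w')\). The second sum therefore equals \(\sum_{w'}(qt)^{\operatorname{des}(w')}q^{\operatorname{maj}(w')}=\Phi^{d+1}_{a-1,b}(qt,q)\). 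This gives \(\mathcal G_{a,b}\) as in \eqref{eq:endpoint-contribution}, and summing over \(a\) yields \eqref{eq:trapezoid-endpoint-sum}.

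The main obstacle is bookkeeping rather than any new idea. The conventions of \cite{krattenthaler1993} must agree exactly with those used here: descents are \(RL\) pairs, the boundary is imposed on prefixes with the sign \(R-L\leq d\), and \(\operatorname{maj}\) is indexed by the position of the \(R\). This is what makes the exponent shift \(j-1=i\) come out correctly. We would confirm this on \(r=3\), \(s=6\) against \(\HH_{P_3,\rank}=1+(2+q+q^2)t+q^2t^2\). The degenerate case \(a=1\), where \(\Phi^{d+1}_{0,b}=1\), also needs a check.
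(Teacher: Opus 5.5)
Your proposal is correct and follows the paper's proof essentially step for step. Like the paper, you sort $\mathcal B_{r,s}$ by the number $a$ of letters $L$, rewrite the reversed-ballot condition as the prefix boundary $R_k-L_k\le d$, split the run-start weight into $t^{\operatorname{des}}q^{\operatorname{maj}}$ plus a $(t-1)$ correction for words beginning with $L$, and evaluate that correction as $\Phi^{d+1}_{a-1,b}(qt,q)$ by deleting the initial $L$; the checks you plan (the case $r=3$, $s=6$, and $a=1$ where $\Phi^{b}_{0,b}=1$) both come out as expected.
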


\begin{proof}[Proof of \cref{thm:trapezoid-endpoint}]
Fix a word \(w\in\mathcal B_{r,s}\) with \(a\) letters \(L\) and \(b=m-a\)
letters \(R\).  Its reversed-ballot condition is equivalent to
\[
 R_k-L_k\leq b-a=d
\]
for every prefix of \(w\), and \(a\leq r-1\).  Let
\[
 S(w)=\{i:w_i=R,\ w_{i+1}=L\}.
\]
The run-start set in the \(q\)-Zeta formula satisfies
\[
 D(w)=\bigl(\{1\}\text{ if }w_1=L\text{, otherwise }\varnothing\bigr)
       \cup\{i+1:i\in S(w)\}.
\]
Consequently
\[
 \sum_{j\in D(w)}(j-1)=\operatorname{maj}(w),
 \qquad |D(w)|=|S(w)|+\mathbf 1_{\{w_1=L\}}.
\]
The first term in \cref{eq:endpoint-contribution} counts all such paths with
the ordinary descent weight.  For the paths beginning with \(L\), write
\(w=Lu\).  Deleting that initial horizontal step gives a path from the
origin to \((a-1,b)\) under the boundary \(R_k-L_k\leq d+1\).  Moreover,
\[
 \operatorname{des}(w)=\operatorname{des}(u),
 \qquad
 \operatorname{maj}(w)=\operatorname{maj}(u)+\operatorname{des}(u).
\]
Their ordinary descent generating function is therefore
\(\Phi_{a-1,b}^{d+1}(qt,q)\).  This is the initial-horizontal-step
identity of \cite[Equation~(3.5)]{krattenthaler1993}, in our coordinates.
The extra initial run multiplies it by
\(t\) rather than \(1\).  Replacing the ordinary contribution by this extra
factor gives \cref{eq:endpoint-contribution}.  The all-\(R\) word contributes
the initial \(1\), and summing over \(a\) proves
\cref{eq:trapezoid-endpoint-sum}.

Finally, \cref{eq:one-boundary-enumerator} is exactly the one-boundary
descent--major-index formula cited above, so the expression is finite and
explicit.
\end{proof}

\begin{theorem}[Closed formula for every nonbottom trapezoidal interval]
\label{thm:trapezoid-interval-endpoint}
Let \(x=(i,c)\leq y=(j,d)\) be nonbottom cells of \(T_{r,s}\), and put
\[
 \ell=i-j,\qquad u=d-c,\qquad \delta=s-i-c.
\]
For the induced rank \(\rho_{x,y}(z)=\rho(z)-\rho(x)\) on
the interval \([x,y]\), one has
\begin{equation}
\label{eq:trapezoid-interval-gaussian}
 \HH_{[x,y]}(q,t)=\Phi_{\ell,u}^{\delta}(t,q).
\end{equation}
In particular,
\begin{equation}
\label{eq:trapezoid-interval-total}
 \HH_{[x,y]}(1,1)
 =\binom{\ell+u}{\ell}
  -\binom{\ell+u}{\ell+\delta+1}.
\end{equation}
Thus every nonbottom interval has a complete \(q\)-binomial \(q\)-Zeta numerator,
depending only on its displacement \((\ell,u)\) and its boundary slack
\(\delta\).
\end{theorem}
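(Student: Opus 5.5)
The interval \([x,y]\) with \(x=(i,c)\) nonbottom is already a bounded graded poset of ranks \(0,\ldots,\ell+u\). Its rank function is \(\rho_{x,y}(z)=\rho(z)-\rho(x)\), and \(\rho(y)-\rho(x)=(d-j)-(c-i)=u+\ell\). So I would apply \cref{thm:flag-expansion} and \cref{thm:r-labelling} to \([x,y]\) directly, with no minimum adjunction needed. The restriction of the signed labelling from the proof of \cref{thm:trapezoid-family} to covers inside \([x,y]\) is still an EL-labelling, since that proof verified the EL property on every interval not starting at \(\bottom\). Hence
\[
 \HH_{[x,y]}(q,t)=\sum_{\text{maximal chains }\mathbf c}q^{\qsum{\operatorname{Des}(\mathbf c)}}t^{|\operatorname{Des}(\mathbf c)|}.
\]
The plan is to identify the maximal chains of \([x,y]\) with lattice words and their descent sets with the \(\operatorname{maj}\)/\(\operatorname{des}\) statistics appearing in \(\Phi^{\delta}_{\ell,u}\).

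First step: maximal chains. A maximal chain is a word \(w\) with \(\ell\) letters \(L\) (moves \(i\mapsto i-1\)) and \(u\) letters \(R\) (moves \(c\mapsto c+1\)). After a prefix with counts \(L_k,R_k\), the cell is \((i-L_k,c+R_k)\). The lower cell condition \(i-L_k\leq c+R_k\) holds automatically because \(i\leq c\). The interval constraints \(i-L_k\geq j\) and \(c+R_k\leq d\) are automatic from the letter counts. The only remaining constraint is the upper boundary \(c+R_k\leq s-(i-L_k)\), which is \(R_k-L_k\leq s-i-c=\delta\). Since \(y\) is a cell, \(u-\ell\leq \delta\), which is the hypothesis \(b\leq a+d\) needed for \(\Phi\). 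So maximal chains biject with exactly the paths enumerated by \(\Phi^{\delta}_{\ell,u}\) in the Krattenthaler--Mohanty formula.

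Second step: descents. Within a nonbottom interval, \(L\)-labels are \(-(i'-1)\), which are negative and strictly increasing along consecutive \(L\) moves because the row index decreases. \(R\)-labels are \(c'+1\), which are positive and strictly increasing. The only strict decrease therefore occurs at an \(R\) followed by an \(L\), at internal rank \(p\), where \(p\) is the position of that \(R\). So \(\operatorname{Des}=\{p:w_p=R,w_{p+1}=L\}\), which gives \(\qsum{\operatorname{Des}}=\operatorname{maj}(w)\) and \(|\operatorname{Des}|=\operatorname{des}(w)\). Unlike the global case, there is no initial-\(L\) correction, since there is no \(\bottom\) atom label. This yields \cref{eq:trapezoid-interval-gaussian}.

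Third step: specialization. At \(q=t=1\), the formula gives the number of paths with \(\ell\) \(L\)'s and \(u\) \(R\)'s staying weakly below \(R_k-L_k=\delta\). By the reflection principle, this equals \(\binom{\ell+u}{\ell}\) minus the number of paths touching \(R_k-L_k=\delta+1\). Reflecting the initial segment up to the first touch sends the endpoint to one with \(\ell+\delta+1\) letters \(R\), giving \(\binom{\ell+u}{\ell+\delta+1}\). This can be cross-checked against \cref{eq:one-boundary-enumerator} at \(q=1\), \(x=1\) via Vandermonde, but the reflection argument suffices. The main point needing care is the first step: confirming that the lower triangle boundary \(i'\leq c'\) never binds inside a nonbottom interval, and tracking the descent positions relative to the interval's internal ranks \([\ell+u-1]\).
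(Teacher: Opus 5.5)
Your proposal is correct and is essentially the paper's proof: the same bijection between maximal chains and words, with the single nonautomatic constraint $R_k-L_k\leq\delta$; the same reading of descents as $RL$ positions, so each chain is weighted by $t^{\operatorname{des}}q^{\operatorname{maj}}$; and the same appeal to the Krattenthaler--Mohanty formula. The differences are minor. The paper treats $x=y$ separately (numerator $1=\Phi^{\delta}_{0,0}$). It obtains the $(1,1)$ value from $\Phi^{\delta}_{\ell,u}(1,q)=\qbinom{\ell+u}{\ell}-q^{\delta+1}\qbinom{\ell+u}{\ell+\delta+1}$ rather than by direct reflection. In your reflection step, reflecting the \emph{terminal} segment gives words with $\ell+\delta+1$ letters $R$; the initial segment gives $\ell+\delta+1$ letters $L$, so the binomial coefficient is the same.
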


\begin{proof}[Proof of \cref{thm:trapezoid-interval-endpoint}]
If \(x=y\), the weak-chain series is \((1-t)^{-1}\), so the numerator
is \(1=\Phi_{0,0}^{\delta}(t,q)\).  Assume henceforth that \(x<y\).
A maximal chain in \([x,y]\) is a word with \(\ell\) letters \(L\) and
\(u\) letters \(R\).  After a prefix with counts \(L_k,R_k\), its cell is
\((i-L_k,c+R_k)\).  The sole nonautomatic cell condition is
\[
 c+R_k\leq s-(i-L_k),
 \qquad\text{equivalently}\qquad
 R_k-L_k\leq\delta.
\]
The local descent set is
\(\{p:w_p=R,\ w_{p+1}=L\}\), so its \(q\)-Zeta weight is the ordinary
descent--major-index weight.  \Cref{eq:one-boundary-enumerator}
therefore gives \cref{eq:trapezoid-interval-gaussian}.  Setting \(x=1\) and
using the one-boundary major-index specialization
\cite[Corollary~4]{krattenthaler1993} yields
\[
 \Phi_{\ell,u}^{\delta}(1,q)
 =\qbinom{\ell+u}{\ell}
  -q^{\delta+1}
    \qbinom{\ell+u}{\ell+\delta+1},
\]
and \(q=1\) gives \cref{eq:trapezoid-interval-total}.
\end{proof}

\begin{definition}[Endpoint contribution with a prescribed initial atom]
\label{def:trapezoid-principal-contribution}
For \(a,b,d\geq0\) with \(b\leq a+d\), define
\[
 \Psi_{a,b}^{d}(q,t)=
 \begin{cases}
  1,&a=0,\\[2pt]
  \Phi_{a,b}^{d}(t,q)
  +(t-1)\Phi_{a-1,b}^{d+1}(qt,q),&a\geq1.
 \end{cases}
\]
\end{definition}

\begin{theorem}[Closed formula for every principal trapezoidal interval]
\label{thm:trapezoid-principal-endpoint}
Let \(y=(j,d)\in T_{r,s}\), put \(K=\min\{r,d\}\), and use the induced rank
\(\widehat\rho(z)-\widehat\rho(\bottom)=\widehat\rho(z)\), with
\(\widehat\rho(\bottom)=0\), on the principal interval
\([\bottom,y]\).  Then
\begin{equation}
\label{eq:trapezoid-principal-endpoint}
\HH_{[\bottom,y]}(q,t)
 =\sum_{k=j}^{K}
   \Psi_{k-j,\,d-k}^{\,s-2k}(q,qt).
\end{equation}
Consequently, every interval of the augmented trapezoidal lattice has a
closed Gaussian-binomial \(q\)-Zeta numerator: nonbottom intervals are given by
\cref{thm:trapezoid-interval-endpoint}, and bottom intervals by
\cref{eq:trapezoid-principal-endpoint}; the remaining singleton
\([\bottom,\bottom]\) has numerator \(1\).
\end{theorem}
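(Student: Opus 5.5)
The plan is to apply Chapoton's flag expansion (\cref{thm:flag-expansion}) directly to the bounded interval \([\bottom,y]\). Here no minimum-adjunction shift is needed, since \(\bottom\) already belongs to the interval. The descent data will come from the EL-labelling constructed in the proof of \cref{thm:trapezoid-family}.

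\textbf{Setup.} With \(\widehat\rho(\bottom)=0\), the interval \([\bottom,y]\) is bounded and graded of ranks \(0,\ldots,H+1\), where \(H+1=\widehat\rho(y)=d-j+1\). The restriction of an EL-labelling to an interval is again an EL-labelling. So \cref{thm:r-labelling} gives
\[
\HH_{[\bottom,y]}(q,t)=\sum_{\text{maximal chains}} q^{\qsum{S}}t^{|S|},
\]
where \(S\) is the descent set of the chain.

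\textbf{Step 1: parametrize the maximal chains.} A maximal chain of \([\bottom,y]\) starts with an atom \((k,k)\). We need \((k,k)\leq(j,d)\), which means \(j\leq k\leq d\), and \(k\leq r\) is needed for \((k,k)\) to be a cell. Hence \(j\leq k\leq K\). The chain then makes \(a=k-j\) left moves and \(b=d-k\) right moves. After a prefix with counts \(L_p,R_p\), the current cell is \((k-L_p,k+R_p)\). The only nonautomatic cell constraint is
\[
R_p-L_p\leq s-2k.
\]
So for fixed \(k\), the chains are in bijection with words having \(a\) letters \(L\) and \(b\) letters \(R\) under the one-boundary condition with \(d'=s-2k\). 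The hypothesis \(b\leq a+d'\) reduces to \(d+j\leq s\), which holds because \(y\) is a cell.

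\textbf{Step 2: identify the descents.} I expect this to be the main point requiring care, because of the indexing. The atom label is \(k\).
\begin{itemize}
\item If the first move is \(L\), its label is \(-(k-1)<k\), giving a descent at internal rank \(1\).
\item If the first move is \(R\), its label is \(k+1>k\), giving an ascent.
\item Afterwards, left labels are negative and increasing and right labels are positive and increasing. Hence the descents are exactly the positions where \(R\) is followed by \(L\).
\end{itemize}
Move \(p\) of the word ends at rank \(p+1\), so the descent between move \(p-1\) and move \(p\) sits at internal rank \(p\). Therefore the descent set is exactly \(D(w)\), read as internal ranks. This gives
\[
\qsum{S}=\sum_{p\in D(w)}p=\sum_{p\in D(w)}(p-1)+|D(w)|,
\]
so each chain has weight \(q^{\sum_{p\in D(w)}(p-1)}(qt)^{|D(w)|}\).

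\textbf{Step 3: sum over words and over atoms.} For \(a\geq1\), the sum over admissible words of \(q^{\sum_{p\in D(w)}(p-1)}t^{|D(w)|}\) equals \(\Phi_{a,b}^{d'}(t,q)+(t-1)\Phi_{a-1,b}^{d'+1}(qt,q)\). This is exactly the computation in the proof of \cref{thm:trapezoid-endpoint}:
\begin{itemize}
\item the \(\operatorname{maj}\) statistic equals \(\sum_{p\in D(w)}(p-1)\);
\item an extra factor \(t\) accounts for an initial \(L\);
\item the initial-\(L\) paths are handled by deleting the first step, which raises the boundary to \(d'+1\).
\end{itemize}
For \(a=0\), the single all-\(R\) word has no descents and contributes \(1\). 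So the sum is \(\Psi_{k-j,d-k}^{s-2k}(q,t)\). Substituting \(t\mapsto qt\) to account for the rank shift from Step 2, and summing over \(k\), yields \cref{eq:trapezoid-principal-endpoint}.

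\textbf{Consequence.} The closing assertion then follows by combining this with \cref{thm:trapezoid-interval-endpoint} for nonbottom intervals. The singleton \([\bottom,\bottom]\) has weak-chain series \((1-t)^{-1}\) and hence numerator \(1\).
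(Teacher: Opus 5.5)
Your proposal is correct and follows the paper's proof essentially step for step. Both arguments parametrize maximal chains of \([\bottom,y]\) by an initial atom \((k,k)\) with \(j\leq k\leq K\) and a move word under the single boundary \(R_p-L_p\leq s-2k\), and both read the descent set as \(D(w)\), so each chain has weight \(q^{\operatorname{maj}(w)}(qt)^{|D(w)|}\). Both then evaluate each atom's contribution by the initial-\(L\) deletion argument of \cref{thm:trapezoid-endpoint}, which gives \(\Psi_{k-j,\,d-k}^{\,s-2k}(q,\tau)\), and substitute \(\tau=qt\).
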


\begin{proof}[Proof of \cref{thm:trapezoid-principal-endpoint}]
A maximal chain in \([\bottom,y]\) chooses an initial atom \((k,k)\) with
\(j\leq k\leq K\), followed by \(k-j\) left moves and \(d-k\) right moves.
For a prefix with \(L_p,R_p\) moves, the cell is
\((k-L_p,k+R_p)\); the only nonautomatic condition is
\[
 R_p-L_p\leq s-2k.
\]
The edge from \(\bottom\) to \((k,k)\) has label \(k\).  It is followed by a
descent exactly when the first move is \(L\), while all later descents are
the ordinary \(R\)-to-\(L\) descents.  Hence the descent ranks of the chain
inside the bounded interval \([\bottom,y]\) form exactly the run-start set
\(D(w)\) of its move word \(w\), and the flag expansion
\cref{thm:flag-expansion} weights the chain by
\[
 q^{\sum_{p\in D(w)}p}\,t^{|D(w)|}
 =q^{\operatorname{maj}(w)}(qt)^{|D(w)|},
\]
using \(\sum_{p\in D(w)}(p-1)=\operatorname{maj}(w)\) as in the proof of
\cref{thm:trapezoid-endpoint}.  For fixed \(k\), the deletion argument of
\cref{thm:trapezoid-endpoint} evaluates the generating function of
\(q^{\operatorname{maj}(w)}\tau^{|D(w)|}\) over these words as
\(\Psi_{k-j,\,d-k}^{\,s-2k}(q,\tau)\); when \(k=j\) the word is all \(R\)
and contributes \(1\).  Substituting \(\tau=qt\) gives
\cref{eq:trapezoid-principal-endpoint}.  The endpoint inequality
\(d+j\leq s\) guarantees \(d-k\leq(k-j)+(s-2k)\), so the one-boundary
formula applies to every summand.
\end{proof}

\begin{corollary}[Local \(q=1\) ballot--Narayana profile]
\label{cor:trapezoid-interval-qone-profile}
With the notation of \cref{thm:trapezoid-interval-endpoint}, the complete
univariate specialization is
\begin{equation}
\label{eq:trapezoid-interval-qone-profile}
\HH_{[x,y]}(1,t)
=\sum_{\nu\geq0} t^\nu
\left(
\binom{\ell}{\nu}\binom{u}{\nu}
-\binom{u-\delta+1}{\nu+1}
 \binom{\ell+\delta-1}{\nu-1}
\right).
\end{equation}
When the upper boundary is inactive (\(\delta\geq u\)), this reduces to
\(\sum_{\nu\geq0}\binom{\ell}{\nu}\binom{u}{\nu}t^\nu\).
\end{corollary}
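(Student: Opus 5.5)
The plan is to specialize \cref{eq:trapezoid-interval-gaussian} of \cref{thm:trapezoid-interval-endpoint} at \(q=1\). That theorem gives \(\HH_{[x,y]}(q,t)=\Phi_{\ell,u}^{\delta}(t,q)\), and \cref{eq:one-boundary-enumerator} is a finite sum over \(\nu\) of polynomials in \(q\) times \(t^\nu\). So setting \(q=1\) can be done term by term. Each Gaussian binomial \(\qbinom{N}{k}\) specializes to the ordinary binomial \(\binom{N}{k}\), including the zero convention for out-of-range indices. The factors \(q^{\nu^2}\) and \(q^{\delta}\) become \(1\). This yields \cref{eq:trapezoid-interval-qone-profile} directly, with \(a=\ell\), \(b=u\), \(d=\delta\).

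One hypothesis needs checking: \(\Phi^{d}_{a,b}\) was defined only for \(b\leq a+d\). Here this reads \(u\leq \ell+\delta\), i.e.\ \(d-c\leq i-j+s-i-c\), i.e.\ \(d+j\leq s\), which holds because \(y=(j,d)\) is a cell of \(T_{r,s}\). The degenerate case \(x=y\) was already treated in the theorem. At \(q=1\) it gives \(1=\binom{0}{0}\binom{0}{0}\), and the subtracted term vanishes because \(\binom{\delta-1}{-1}=0\) (for \(\nu=0\)) and \(\binom{1-\delta}{\nu+1}=0\) when \(\delta\geq 1\). In fact, in this case \(\delta=s-2i\geq 0\), and I will check \(\delta=0\) separately.

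For the reduction when \(\delta\geq u\), I will show that the subtracted term vanishes for every \(\nu\geq0\). Its first factor is \(\binom{u-\delta+1}{\nu+1}\). If \(\delta\geq u+1\), the top index \(u-\delta+1\) is at most \(0\) while the bottom index \(\nu+1\) is at least \(1\), so the factor is zero by the stated convention (the top is negative or zero and the bottom is positive). If \(\delta=u\), the factor is \(\binom{1}{\nu+1}\), which is nonzero only for \(\nu=0\). But then the second factor \(\binom{\ell+\delta-1}{-1}\) vanishes. So in all cases only \(\sum_\nu\binom{\ell}{\nu}\binom{u}{\nu}t^\nu\) remains.

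As a combinatorial cross-check, I will note that when \(\delta\geq u\) the condition \(R_k-L_k\leq\delta\) is automatic, since \(R_k\leq u\). The count is then the classical Narayana-type enumeration of words with \(\ell\) letters \(L\) and \(u\) letters \(R\) by the number of \(RL\) descents, namely \(\binom{\ell}{\nu}\binom{u}{\nu}\).

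The only delicate point is the boundary conventions for binomials with negative or small top index, particularly \(\delta=u\) and \(\delta=0\). Nothing else requires more than substitution.
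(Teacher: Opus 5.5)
Your proposal is correct and matches the paper's proof: you set \(q=1\) in the Gaussian formula of \cref{thm:trapezoid-interval-endpoint} through the definition of \(\Phi_{\ell,u}^{\delta}\), and then observe that the subtracted binomial product vanishes for every \(\nu\) when \(\delta\geq u\). Your split into the cases \(\delta\geq u+1\) and \(\delta=u\) spells out the vanishing check that the paper leaves implicit. There is one harmless slip: when \(x=y=(i,c)\) the slack is \(\delta=s-i-c\), not \(s-2i\), but \(\delta\geq0\) still holds because \(c\leq s-i\), so nothing in your argument changes.
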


\begin{proof}
Set \(q=1\) in \cref{eq:trapezoid-interval-gaussian} and use the definition
of \(\Phi_{\ell,u}^{\delta}\).  If \(\delta\geq u\), the second binomial
product vanishes for every \(\nu\), giving the stated rectangular
specialization.
\end{proof}

\begin{corollary}[A one-variable \(q\)-binomial specialization]
\label{cor:trapezoid-q-total}
With \(m=s-2\), one has
\begin{equation}
\label{eq:trapezoid-q-total}
 \HH_{T_{r,s},\rho}(q,1)
 =1+\sum_{a=1}^{r-1}
 \left(
 \qbinom{m}{a}
 -q^{m-2a+1}\qbinom{m}{a-1}
 \right).
\end{equation}
At \(q=1\), this reduces to
\(\HH_{T_{r,s},\rho}(1,1)=\binom{s-2}{r-1}\).
\end{corollary}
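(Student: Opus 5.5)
The plan is to specialize the Gaussian-binomial endpoint decomposition of \cref{thm:trapezoid-endpoint} at \(t=1\), and then evaluate each summand with the one-boundary major-index specialization already used in the proof of \cref{thm:trapezoid-interval-endpoint}.

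First, set \(t=1\) in \cref{eq:endpoint-contribution}. The correction term \((t-1)\Phi_{a-1,b}^{d+1}(qt,q)\) vanishes identically. Hence \(\mathcal G_{a,m-a}(q,1)=\Phi_{a,b}^{d}(1,q)\) with \(b=m-a\) and \(d=m-2a\). The hypothesis \(b\leq a+d\) needed for \cref{eq:one-boundary-enumerator} holds with equality. Equivalently, one can argue directly from \cref{thm:trapezoid-family}: at \(t=1\) the numerator is \(\sum_{w}q^{\operatorname{maj}(w)}\) over \(\mathcal B_{r,s}\), because \(\sum_{j\in D(w)}(j-1)=\operatorname{maj}(w)\). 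Grouping these words by their number \(a\) of letters \(L\) gives the same summands, with \(a=0\) giving the all-\(R\) word and hence the constant \(1\).

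Second, apply the specialization \cite[Corollary~4]{krattenthaler1993}, quoted in the proof of \cref{thm:trapezoid-interval-endpoint}:
\[
  \Phi_{a,b}^{d}(1,q)=\qbinom{a+b}{a}-q^{d+1}\qbinom{a+b}{a+d+1}.
\]
Substituting \(a+b=m\) and \(d+1=m-2a+1\) turns the first term into \(\qbinom{m}{a}\) and the exponent into \(m-2a+1\). For the second binomial, note that \(a+d+1=m-a+1\). The symmetry \(\qbinom{m}{m-a+1}=\qbinom{m}{a-1}\) then applies, since \(0\leq a-1\leq m\). Summing over \(1\leq a\leq r-1\) gives \cref{eq:trapezoid-q-total}.

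Third, set \(q=1\). The summands become \(\binom{m}{a}-\binom{m}{a-1}\), and the sum telescopes to \(\binom{m}{r-1}-\binom{m}{0}\). Adding the leading \(1\) cancels \(\binom{m}{0}\), leaving \(\binom{m}{r-1}=\binom{s-2}{r-1}\). Here \(r-1\leq m\) because \(s\geq 2r\).

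The only delicate step is the index bookkeeping in the Krattenthaler--Mohanty specialization. One must check that the boundary parameter \(d\) and the endpoint match the convention \(R_k-L_k\leq d\) used in \cref{eq:one-boundary-enumerator}, and that the symmetry of the Gaussian binomial is applied with valid indices. The remaining steps are routine.
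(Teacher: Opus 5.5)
Your proposal is correct and follows the paper's proof. You set \(t=1\) in \cref{thm:trapezoid-endpoint} so the correction term vanishes, apply the Krattenthaler--Mohanty major-index specialization, use Gaussian-binomial symmetry, and telescope at \(q=1\). Your index computation \(a+d+1=m-a+1\) is the right one; the paper's displayed intermediate \(\qbinom{m}{m-a-1}\) is a typo for \(\qbinom{m}{m-a+1}\).
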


\begin{proof}
Set \(t=1\) in \cref{thm:trapezoid-endpoint}; the initial-left-run
correction vanishes.  The one-boundary major-index specialization
\cite[Corollary~4]{krattenthaler1993} gives
\[
 \Phi_{a,b}^{d}(1,q)
 =\qbinom{a+b}{a}
  -q^{d+1}\qbinom{a+b}{a+d+1}.
\]
Here \(a+b=m\), \(d=m-2a\), and
\(\qbinom{m}{a+d+1}=\qbinom{m}{m-a-1}=\qbinom{m}{a-1}\).
This proves \cref{eq:trapezoid-q-total}.  At \(q=1\), the summand is
\(\binom{m}{a}-\binom{m}{a-1}\), so the sum telescopes.
\end{proof}

\begin{corollary}[The full \(q=1\) trapezoidal profile]
\label{cor:trapezoid-qone-profile}
For \(s\geq 2r\geq2\),
\begin{equation}
\label{eq:trapezoid-qone-profile}
\HH_{T_{r,s},\rho}(1,t)
 =\sum_{k=0}^{r-1}
   \binom{r-1}{k}\binom{s-r-1}{k}t^k.
\end{equation}
In particular, this profile is real-rooted with simple negative zeros.  More
precisely, writing \(A=r-1\) and \(B=s-r-1\), one has
\[
\HH_{T_{r,s},\rho}(1,t)
 =(1-t)^A
 P_A^{(0,B-A)}\!\left(\frac{1+t}{1-t}\right),
\]
where \(P_A^{(\alpha,\beta)}\) is the Jacobi polynomial.
\end{corollary}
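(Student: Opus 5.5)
The plan is to specialize the Gaussian-binomial endpoint decomposition of \cref{thm:trapezoid-endpoint} at \(q=1\) and show that the sum over \(a\) telescopes. Put \(m=s-2\), \(A=r-1\), and for \(0\leq a\leq r-1\) define
\[
 N_a(t)=\sum_{k\geq0}\binom{a}{k}\binom{m-a}{k}t^k .
\]
Since \(N_0=1\), the claimed profile \(N_{r-1}\) follows from \cref{eq:trapezoid-endpoint-sum} once we have, for each \(1\leq a\leq r-1\),
\[
 \mathcal G_{a,m-a}(1,t)=N_a(t)-N_{a-1}(t).
\]
Note that \(m-(r-1)=s-r-1=B\).

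To evaluate the left side, substitute the parameters into \cref{eq:one-boundary-enumerator} at \(q=1\). With \(b=m-a\) and \(d=b-a\), the boundary term has upper indices \(b-d+1=a+1\) and \(a+d-1=b-1\). This gives
\[
 \Phi^{d}_{a,b}(t,1)=\sum_\nu t^\nu\Bigl[\tbinom{a}{\nu}\tbinom{b}{\nu}-\tbinom{a+1}{\nu+1}\tbinom{b-1}{\nu-1}\Bigr].
\]
Similarly, for \(\Phi^{d+1}_{a-1,b}\) the indices become \(a\) and \(b-1\), so
\[
 \Phi^{d+1}_{a-1,b}(t,1)=\sum_\nu t^\nu\Bigl[\tbinom{a-1}{\nu}\tbinom{b}{\nu}-\tbinom{a}{\nu+1}\tbinom{b-1}{\nu-1}\Bigr].
\]
Extracting \([t^k]\) from \(\Phi+(t-1)\Phi'\) leaves an identity among at most eight products of binomials to check against \(\binom{a}{k}\binom{b}{k}-\binom{a-1}{k}\binom{b+1}{k}\).

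The main obstacle is this binomial identity. I will first reduce each product with Pascal's rule: \(\binom{a}{\nu}=\binom{a-1}{\nu}+\binom{a-1}{\nu-1}\), \(\binom{a+1}{\nu+1}=\binom{a}{\nu+1}+\binom{a}{\nu}\), and \(\binom{b+1}{k}=\binom{b}{k}+\binom{b}{k-1}\). This should let the terms involving \(\binom{a}{\cdot}\binom{b-1}{\cdot}\) cancel in pairs. If the bookkeeping resists, my fallback is a combinatorial check. At \(q=1\), \(\mathcal G_{a,m-a}(1,t)\) counts paths with \(a\) \(L\)'s under the boundary, weighted by \(t^{\#L\text{-runs}}\). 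The boundary term should then be matched, via a reflection that preserves the number of \(L\)-runs, with the difference between the rectangular counts \(\binom{a}{k}\binom{b}{k}\) (the words with exactly \(k\) \(L\)-runs and \(a\) \(L\)'s) and their counterparts for \(a-1\).

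For the Jacobi form, I will use the explicit expansion
\[
 P_n^{(\alpha,\beta)}(x)=\sum_k\binom{n+\alpha}{n-k}\binom{n+\beta}{k}\Bigl(\tfrac{x-1}{2}\Bigr)^k\Bigl(\tfrac{x+1}{2}\Bigr)^{n-k}
\]
with \(n=A\), \(\alpha=0\), \(\beta=B-A\). At \(x=(1+t)/(1-t)\) one has \((x-1)/2=t/(1-t)\) and \((x+1)/2=1/(1-t)\), so \((1-t)^AP_A^{(0,B-A)}(x)=\sum_k\binom{A}{k}\binom{B}{k}t^k\). Real-rootedness then follows because \(\alpha=0>-1\) and \(\beta=s-2r\geq0\), so \(P_A^{(0,B-A)}\) has \(A\) simple zeros in \((-1,1)\). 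The Möbius map \(x\mapsto t=(x-1)/(x+1)\) is increasing and sends \((-1,1)\) bijectively onto \((-\infty,0)\). This produces \(A\) distinct negative zeros. Since the leading coefficient \(\binom{B}{A}\) is positive (as \(B\geq A\)), the profile has degree exactly \(A\), so these are all of its zeros.
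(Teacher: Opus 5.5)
Your proposal is correct and is essentially the paper's proof. The paper also specializes \cref{thm:trapezoid-endpoint} at $q=1$ and telescopes, using two applications of Pascal's rule to show that the $a$-th endpoint contribution has $t^k$-coefficient $\binom{a}{k}\binom{m-a}{k}-\binom{a-1}{k}\binom{m-a+1}{k}$, which is exactly your identity $\mathcal G_{a,m-a}(1,t)=N_a(t)-N_{a-1}(t)$. Your Pascal bookkeeping does close, so the combinatorial fallback is unnecessary.

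The only difference is cosmetic and lies in the Jacobi step. The paper passes through ${}_2F_1(-A,-B;1;t)$, whereas you use the explicit expansion of $P_n^{(\alpha,\beta)}$ in powers of $(x\mp1)/2$. Your check that the degree is exactly $A$, via $\binom{B}{A}>0$, is a useful extra detail.
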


\begin{proof}
Put \(N=s-2\).  For \(1\leq a\leq r-1\), let \(b=N-a\).  From
\cref{thm:trapezoid-endpoint} and the \(q=1\) specialization of
\cref{eq:one-boundary-enumerator}, the coefficient of \(t^k\), for \(k\geq1\),
in the \(a\)-th endpoint contribution is
\[
\binom{a-1}{k-1}\binom{N-a+1}{k}
-\binom{a}{k}\binom{N-a}{k-1}.
\]
The constant term is supplied only by the all-\(R\) word.  The elementary
partial Vandermonde identity
\[
\sum_{a=1}^{R}\!
\left[
\binom{a-1}{k-1}\binom{N-a+1}{k}
-\binom{a}{k}\binom{N-a}{k-1}
\right]
=\binom{R}{k}\binom{N-R}{k}
\]
(with \(R=r-1\)) therefore gives \cref{eq:trapezoid-qone-profile}.  The
identity follows by induction on \(R\): the \(R\)-th summand is the difference
between \(\binom{R}{k}\binom{N-R}{k}\) and
\(\binom{R-1}{k}\binom{N-R+1}{k}\), after two applications of Pascal's
identity.

Finally,
\[
\sum_{k=0}^{A}\binom{A}{k}\binom{B}{k}t^k
={}_2F_1(-A,-B;1;t)
=(1-t)^A P_A^{(0,B-A)}
  \!\left(\frac{1+t}{1-t}\right).
\]
Since \(B\geq A\), the Jacobi parameters are nonnegative and its \(A\) zeros
lie in \((-1,1)\).  The change of variables
\(t=(x-1)/(x+1)\) sends them to simple negative zeros.
\end{proof}

\begin{proposition}[A rational kernel for the \(q=1\) profiles]
\label{prop:trapezoid-qone-kernel}
For \(A,B\geq0\), put
\[
 J_{A,B}(t)=\sum_{k\geq0}\binom{A}{k}\binom{B}{k}t^k.
\]
Then, as a formal power series,
\begin{equation}
\label{eq:trapezoid-qone-kernel}
 \sum_{A,B\geq0}J_{A,B}(t)x^Ay^B
 =\frac{1}{(1-x)(1-y)-txy}.
\end{equation}
Consequently, for \(A,B\geq1\),
\[
 J_{A,B}=J_{A-1,B}+J_{A,B-1}+(t-1)J_{A-1,B-1},
\]
with \(J_{A,0}=J_{0,B}=1\).  The trapezoidal profile in
\cref{cor:trapezoid-qone-profile} is \(J_{r-1,s-r-1}(t)\).
\end{proposition}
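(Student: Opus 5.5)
The plan is to verify the generating function \cref{eq:trapezoid-qone-kernel} by direct coefficient extraction, and then read the recurrence off the denominator.

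First, expand the right-hand side as a geometric series in the single quantity \(txy/((1-x)(1-y))\):
\[
 \frac{1}{(1-x)(1-y)-txy}
 =\sum_{k\geq0}\frac{t^k x^k y^k}{(1-x)^{k+1}(1-y)^{k+1}}.
\]
This is legitimate formally, because the \(k\)-th term has total \((x,y)\)-degree at least \(2k\). Next, use \((1-x)^{-(k+1)}=\sum_{n\geq0}\binom{n+k}{k}x^n\). The coefficient of \(x^A\) in \(x^k(1-x)^{-(k+1)}\) is then \(\binom{A}{k}\), which vanishes for \(k>A\), and the same holds for \(y^B\). Hence the coefficient of \(x^Ay^B\) is \(\sum_k\binom{A}{k}\binom{B}{k}t^k=J_{A,B}(t)\), as claimed.

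For the recurrence, multiply both sides by \((1-x)(1-y)-txy=1-x-y+(1-t)xy\). This gives \(\sum_{A,B}J_{A,B}x^Ay^B\bigl(1-x-y+(1-t)xy\bigr)=1\). Comparing the coefficients of \(x^Ay^B\) with \(A,B\geq1\) yields
\[
 J_{A,B}-J_{A-1,B}-J_{A,B-1}+(1-t)J_{A-1,B-1}=0,
\]
which is the stated recurrence. The boundary values are immediate from the definition, since \(\binom{0}{k}=\delta_{k0}\) gives \(J_{A,0}=J_{0,B}=1\). As a check, the coefficient comparison on the axes gives the same values: setting \(y=0\) leaves \(1/(1-x)\).

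Finally, with \(A=r-1\) and \(B=s-r-1\), \cref{cor:trapezoid-qone-profile} is literally \(J_{r-1,s-r-1}(t)\).

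There is no real obstacle. The only point needing care is the geometric expansion, where one must justify formal convergence and note that \(\binom{A}{k}\) is being used for \(k\leq A\) with the vanishing convention beyond it. An alternative route, if preferred, is to verify the recurrence directly from Pascal's identity: substitute \(\binom{A}{k}=\binom{A-1}{k}+\binom{A-1}{k-1}\) into both factors. Uniqueness of a solution with the given boundary data then gives the generating function.
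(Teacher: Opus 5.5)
Your proposal is correct and is essentially the paper's proof. The paper interchanges the sums on the left, applies \(\sum_{A\geq k}\binom{A}{k}x^A=x^k/(1-x)^{k+1}\), and sums the geometric series in \(txy/((1-x)(1-y))\), which is your expansion read in the opposite direction; the recurrence is then obtained, as in your argument, by multiplying through by \((1-x)(1-y)-txy\) and comparing coefficients of \(x^Ay^B\).
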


\begin{proof}
Interchanging the sums coefficientwise gives
\[
\begin{aligned}
 \sum_{A,B\geq0}J_{A,B}(t)x^Ay^B
 &=\sum_{k\geq0}t^k
   \frac{x^k}{(1-x)^{k+1}}
   \frac{y^k}{(1-y)^{k+1}}\\
 &=\frac{1}{(1-x)(1-y)-txy},
\end{aligned}
\]
using \(\sum_{A\geq k}\binom{A}{k}x^A=x^k/(1-x)^{k+1}\).
Extracting the coefficient of \(x^Ay^B\) from
\((1-x)(1-y)-txy\) times the series yields the recurrence.
\end{proof}

\begin{proposition}[Fixed-offset algebraic generating functions]
\label{prop:trapezoid-qone-fixed-offset-generating}
For the polynomials \(J_{A,B}(t)\) in
\cref{prop:trapezoid-qone-kernel}, fix \(\kappa\in\NN\) and put
\[
  \Delta_t(z)=\sqrt{\bigl(1+(1-t)z\bigr)^2-4z}.
\]
Then
\begin{equation}
\label{eq:trapezoid-qone-fixed-offset-generating}
  \sum_{n\geq0}J_{n,n+\kappa}(t)z^n
  =\frac{1}{\Delta_t(z)}
   \left(\frac{1+(1-t)z-\Delta_t(z)}{2z}\right)^{\!\kappa}
\end{equation}
as an identity of formal power series in $z$ with coefficients in
\(\mathbb Q[t]\), choosing the square-root branch \(\Delta_t(0)=1\).
The quotient inside the power has a removable singularity at \(z=0\)
and constant term \(1\).  In particular, for every real $t\geq0$ and every fixed
\(\kappa\in\NN\),
\begin{equation}
\label{eq:trapezoid-qone-fixed-offset-growth}
  \lim_{n\to\infty}J_{n,n+\kappa}(t)^{1/n}=(1+\sqrt t)^2.
\end{equation}
\end{proposition}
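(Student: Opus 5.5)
The plan is to extract the diagonal of the bivariate rational kernel in \cref{prop:trapezoid-qone-kernel} and then read off the growth rate from its singularity structure.

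\emph{Step 1: the diagonal as a residue.} By \cref{eq:trapezoid-qone-kernel}, \(J_{n,n+\kappa}\) is the coefficient of \(x^ny^{n+\kappa}\) in \(1/((1-x)(1-y)-txy)\). Substitute \(y=z/x\) and read off the coefficient of \(z^n x^{-\kappa}\). For small \(|z|\), the series \(\sum_n J_{n,n+\kappa}z^n\) equals
\[
\frac{1}{2\pi i}\oint x^{\kappa-1}\,\frac{dx}{(1-x)(1-z/x)-tz}
\]
taken over a circle \(|z|<|x|<1\). Clearing denominators, \((1-x)(1-z/x)-tz=-\bigl(x^2-(1+(1-t)z)x+z\bigr)/x\), so the integrand is
\[
-\frac{x^{\kappa}}{x^2-(1+(1-t)z)x+z}.
\]
The roots are \(x_\pm=\bigl(1+(1-t)z\pm\Delta_t(z)\bigr)/2\). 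As \(z\to0\) we have \(x_-\sim z\), the small root inside the contour, and \(x_+\to1\), outside it. Only \(x_-\) contributes, with residue \(x_-^{\kappa}/(x_+-x_-)=x_-^{\kappa}/\Delta_t(z)\). Since \(x_-=z\cdot\bigl(1+(1-t)z-\Delta_t\bigr)/(2z)\), a bookkeeping point needs care here.

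The factor \(z^\kappa\) must not be absorbed into the series. The coefficient of \(z^n\) in \(y^{n+\kappa}x^n\) with \(y=z/x\) is really \(z^{n+\kappa}x^{-\kappa}\). So I will redo the substitution with \(y=z/x\) tracking the coefficient of \(z^{n+\kappa}\). This gives \(\sum_n J_{n,n+\kappa}z^{n+\kappa}=x_-^{\kappa}/\Delta_t\), and dividing by \(z^\kappa\) yields \cref{eq:trapezoid-qone-fixed-offset-generating}.

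To keep the identity purely formal over \(\mathbb Q[t]\), I will alternatively verify it by the standard formal diagonal lemma, namely the residue in \(x\) of a Laurent series in \(\mathbb Q[t][x,x^{-1}][[z]]\). The removable-singularity claim follows from \(\Delta_t(z)=1-(1+t)z+O(z^2)\). This gives \(1+(1-t)z-\Delta_t=2z+O(z^2)\), so the quotient equals \(1+O(z)\).

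\emph{Step 2: growth.} For real \(t\ge0\), \(J_{n,n+\kappa}(t)\) has nonnegative coefficients and is positive. The radicand \((1+(1-t)z)^2-4z\) has zeros at
\[
z=\frac{1}{(1\pm\sqrt t)^2},
\]
and the smaller one is \(\rho=(1+\sqrt t)^{-2}\); when \(t=1\) there is only the single zero \(z=1/4\). The other factor \(\bigl(1+(1-t)z-\Delta_t\bigr)/(2z)\) is analytic wherever \(\Delta_t\) is. So the generating function is analytic in \(|z|<\rho\) and has an inverse-square-root singularity at \(z=\rho\). The factor there equals \((1+(1-t)\rho)/(2\rho)\neq0\), since \(1+(1-t)\rho>0\) for \(t\ge0\).

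Pringsheim's theorem, applied to a series with nonnegative coefficients, then gives a radius of convergence of exactly \(\rho\). Hence \(\limsup_n J_{n,n+\kappa}(t)^{1/n}=(1+\sqrt t)^2\). To upgrade the limsup to a limit, I will use a standard transfer theorem (Flajolet--Odlyzko), since \(\rho\) is the unique dominant singularity when \(t>0\). The other zero, \((1-\sqrt t)^{-2}\), is larger, or infinite when \(t=1\). This gives \(J_{n,n+\kappa}\sim C n^{-1/2}\rho^{-n}\) with \(C>0\).

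The case \(t=0\) is trivial: \(J=1\) and the limit is \(1\).

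The main obstacle is the rigorous formal diagonal extraction together with correct branch selection. The growth step is routine singularity analysis.
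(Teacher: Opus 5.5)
Your proposal is correct. Its first half follows the paper's outline: both substitute $y=z/x$ into the rational kernel and extract the coefficient of $x^{-\kappa}$. That coefficient is $\sum_n J_{n,n+\kappa}(t)z^{n+\kappa}$, so your first displayed integral should already carry $z^{n+\kappa}$, as you later notice.

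The paper evaluates this coefficient purely formally. It expands $1/(a-x-z/x)$, with $a=1+(1-t)z$, geometrically in $(x+z/x)/a$ and obtains $\sum_j\binom{2j+\kappa}{j}z^{j+\kappa}a^{-2j-\kappa-1}$. It then sums this with the Catalan-series identity $\sum_j\binom{2j+\kappa}{j}u^j=C(u)^{\kappa}/\sqrt{1-4u}$ at $u=z/a^2$. Your residue at the small root is the same computation in analytic form; indeed $x_-=(z/a)\,C(z/a^2)$. It replaces the Catalan identity with a contour argument, so you need one of two supports. Either invoke a formal residue lemma, or observe that all coefficients are polynomials in $t$. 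In the latter case, the analytic identity for each real $t$ implies the formal one: fix a circle $|x|=c\in(0,1)$, then take $|z|$ small in terms of $c$ and $t$.

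The real difference is in the growth statement. The paper does not use the generating function there at all. Coefficientwise monotonicity and $\binom{n+\kappa}{k}/\binom{n}{k}\le\binom{n+\kappa}{\kappa}$ give $J_{n,n}\le J_{n,n+\kappa}\le\binom{n+\kappa}{\kappa}J_{n,n}$. Cauchy--Schwarz for $c_k=\binom{n}{k}t^{k/2}$ then gives $(1+\sqrt t)^{2n}/(n+1)\le J_{n,n}(t)\le(1+\sqrt t)^{2n}$. That argument is elementary and uniform in $t\ge0$, with no separate case $t=0$. It also produces the limit directly, instead of a limsup that must be upgraded.

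Your route through Pringsheim and the Flajolet--Odlyzko transfer is also valid. The function continues analytically to $\mathbb{C}\setminus[\rho,\infty)$, because the second branch point $(1-\sqrt t)^{-2}$ lies beyond $\rho$. Near $\rho$ the bracketed factor equals $(1+\sqrt t)+O\bigl((1-z/\rho)^{1/2}\bigr)$, so the non-analytic correction contributes only $O(n^{-1}\rho^{-n})$. This costs heavier machinery but gives more. You get the sharp asymptotic $J_{n,n+\kappa}(t)\sim C_{t,\kappa}\,n^{-1/2}(1+\sqrt t)^{2n}$ with $C_{t,\kappa}>0$. You also make the growth law a genuine consequence of the algebraic generating function, as the phrase ``in particular'' in the statement suggests.
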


\begin{proof}
Set $a=1+(1-t)z$.  Extracting the coefficient of $x^{-\kappa}$ from the
rational kernel after the substitution $y=z/x$ gives
\[
 z^{\kappa}\sum_{n\geq0}J_{n,n+\kappa}(t)z^n
 =[x^{-\kappa}]\frac{1}{a-x-z/x}.
\]
Expanding formally in $(x+z/x)/a$ yields
\[
 [x^{-\kappa}]\frac{1}{a-x-z/x}
 =z^{\kappa}\sum_{j\geq0}\binom{2j+\kappa}{j}
   \frac{z^j}{a^{2j+\kappa+1}}.
\]
The standard Catalan-series identity
\[
 \sum_{j\geq0}\binom{2j+\kappa}{j}u^j
 =\frac{C(u)^{\kappa}}{\sqrt{1-4u}},
 \qquad
 C(u)=\frac{1-\sqrt{1-4u}}{2u},
\]
follows from the Catalan equation \(C(u)=1+uC(u)^2\) by
coefficient extraction.  With $u=z/a^2$, it gives
\eqref{eq:trapezoid-qone-fixed-offset-generating}.

For the growth statement, coefficientwise monotonicity and
\[
  1\leq
  \frac{\binom{n+\kappa}{k}}{\binom{n}{k}}
  \leq\binom{n+\kappa}{\kappa}
  \qquad (0\leq k\leq n)
\]
give
\[
  J_{n,n}(t)\leq J_{n,n+\kappa}(t)
  \leq\binom{n+\kappa}{\kappa}J_{n,n}(t).
\]
Finally, if $c_k=\binom{n}{k}t^{k/2}$, then Cauchy--Schwarz gives
\[
  \frac{(1+\sqrt t)^{2n}}{n+1}
  \leq J_{n,n}(t)\leq(1+\sqrt t)^{2n}.
\]
Taking $n$th roots proves \eqref{eq:trapezoid-qone-fixed-offset-growth}.
\end{proof}

\begin{corollary}[Fixed-offset Sturm interlacing]
\label{cor:trapezoid-fixed-offset-interlacing}
Fix \(\kappa\in\NN\), and for \(n\geq1\) write
\(J_{n,n+\kappa}(t)=\prod_{j=1}^{n}(t-\zeta_{n,\kappa,j})\) up to its
positive leading factor, with
\(\zeta_{n,\kappa,1}<\cdots<\zeta_{n,\kappa,n}<0\).  Then the zeros of
\(J_{n,n+\kappa}\) and \(J_{n+1,n+1+\kappa}\) strictly interlace on
\(( -\infty,0)\):
\[
  \zeta_{n+1,\kappa,1}<\zeta_{n,\kappa,1}
  <\zeta_{n+1,\kappa,2}<\cdots<
  \zeta_{n,\kappa,n}<\zeta_{n+1,\kappa,n+1}.
\]
\end{corollary}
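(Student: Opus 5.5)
Since \(J_{n,n+\kappa}(t)={}_2F_1(-n,-n-\kappa;1;t)=(1-t)^nP_n^{(0,\kappa)}\bigl(\tfrac{1+t}{1-t}\bigr)\) by \cref{cor:trapezoid-qone-profile} with \(A=n\), \(B=n+\kappa\), I will transfer the classical interlacing of consecutive Jacobi polynomials \(P_n^{(0,\kappa)}\) and \(P_{n+1}^{(0,\kappa)}\) to the \(t\)-line. Both parameters \(\alpha=0\) and \(\beta=\kappa\) are \(>-1\), so \(\{P_n^{(0,\kappa)}\}_n\) is an orthogonal polynomial family on \((-1,1)\) for the weight \((1+x)^\kappa\). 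By the standard Sturm/three-term-recurrence theory, the zeros \(x_{n,1}<\cdots<x_{n,n}\) of \(P_n^{(0,\kappa)}\) are simple and lie in \((-1,1)\), and they strictly interlace with those of \(P_{n+1}^{(0,\kappa)}\). I would simply cite this, for example Szeg\H{o}'s Theorem~3.3.2, or derive it from the Christoffel--Darboux identity.

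Next I transport the zeros. The Möbius map \(x\mapsto t=(x-1)/(x+1)\) sends \((-1,1)\) bijectively onto \((-\infty,0)\), and it is strictly increasing there because its derivative is \(2/(x+1)^2>0\). The factor \((1-t)^n\) has no zeros on \((-\infty,0)\). Moreover \(J_{n,n+\kappa}\) has degree exactly \(n\), with leading coefficient \(\binom{n+\kappa}{n}>0\). Hence its zeros are exactly \(\zeta_{n,\kappa,j}=(x_{n,j}-1)/(x_{n,j}+1)\), all simple and negative, with the same ordering as the \(x_{n,j}\). This is consistent with the simple-negative-zero statement of \cref{cor:trapezoid-qone-profile}. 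Monotonicity preserves strict interlacing, which gives the displayed chain. The main care needed is a degree check: the relation \(P_n(x)=(1-t)^{-n}J(t)\) must hold on the whole interval, so that no zero is lost at \(x=\pm1\), i.e.\ at \(t=0\) or \(t=\infty\). This holds because \(J(0)=1\) and \(\deg J=n\).

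As a self-contained alternative, I could derive a three-term recurrence in \(n\) at fixed \(\kappa\) with positive coefficients directly from the generating function \cref{eq:trapezoid-qone-fixed-offset-generating}. The function \(G=1/\Delta_t\) times a power satisfies a first-order linear ODE in \(z\), which yields a recurrence of the form \(c_nJ_{n+1}=(a_n+b_nt)J_n-d_nJ_{n-1}\) with \(d_n/c_n>0\). The classical Sturm sign-alternation argument then proves interlacing by induction: evaluate \(J_{n+1}\) at the zeros of \(J_n\), where it has sign opposite to \(J_{n-1}\), and check the sign at \(t\to-\infty\) and at \(t=0\). The main obstacle on this route is deriving the recurrence and verifying the positivity of its coefficients. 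For that reason I would present the Jacobi-transfer argument as the proof and mention the recurrence only as a remark.
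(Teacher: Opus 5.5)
Your proposal is correct and is essentially the paper's own proof. You write $J_{n,n+\kappa}(t)=(1-t)^nP_n^{(0,\kappa)}\bigl(\frac{1+t}{1-t}\bigr)$ via \cref{cor:trapezoid-qone-profile}, cite the classical strict interlacing of consecutive Jacobi zeros in $(-1,1)$ (Szeg\H{o}, Theorems~3.3.1--3.3.2), and transport it through the strictly increasing map $t=(x-1)/(x+1)$ onto $(-\infty,0)$; your degree and zero-count check ($\deg J_{n,n+\kappa}=n$ with leading coefficient $\binom{n+\kappa}{n}>0$ and $J_{n,n+\kappa}(0)=1$) simply makes explicit what the paper leaves implicit.
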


\begin{proof}
The Jacobi representation in \cref{cor:trapezoid-qone-profile} gives
\[
  J_{n,n+\kappa}(t)=(1-t)^nP_n^{(0,\kappa)}
      \!\left(\frac{1+t}{1-t}\right).
\]
For fixed nonnegative parameters, consecutive Jacobi polynomials have
simple, strictly interlacing zeros in \((-1,1)\)
\cite[Theorems~3.3.1--3.3.2]{szego1939}.
The change of variables \(t=(x-1)/(x+1)\) is strictly increasing from
\((-1,1)\) onto \((-\infty,0)\), so it preserves the interlacing order.
\end{proof}

\begin{corollary}[Fixed-offset asymptotic zero law]
\label{cor:trapezoid-fixed-offset-zero-law}
Fix \(\kappa\in\NN\), and for \(r\geq2\) set \(s=2r+\kappa\).  If
\(\tau_{r,\kappa,1}<\cdots<\tau_{r,\kappa,r-1}<0\) are the zeros of
\(\HH_{T_{r,2r+\kappa},\rho}(1,t)\), then for every continuous compactly
supported \(f\) on \((-\infty,0)\),
\[
\lim_{r\to\infty}\frac1{r-1}\sum_{j=1}^{r-1}
 f(\tau_{r,\kappa,j})
=\frac1\pi\int_{-\infty}^{0}
  \frac{f(t)}{\sqrt{-t}\,(1-t)}\,dt.
\]
The limit is independent of \(\kappa\).  Equivalently, if
\(j_r/(r-1)\to u\in(0,1)\), then
\[
\tau_{r,\kappa,j_r}\longrightarrow
-\cot^2\!\left(\frac{\pi u}{2}\right).
\]
\end{corollary}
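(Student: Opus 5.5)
The plan is to reduce everything to the classical asymptotic zero distribution of Jacobi polynomials with fixed parameters. By \cref{cor:trapezoid-qone-profile}, with \(A=r-1\) and \(B=s-r-1=r-1+\kappa\), we have
\[
\HH_{T_{r,2r+\kappa},\rho}(1,t)=J_{n,n+\kappa}(t)=(1-t)^nP_n^{(0,\kappa)}\!\left(\frac{1+t}{1-t}\right),\qquad n=r-1 .
\]
The leading coefficient of \(P_n^{(0,\kappa)}\) is nonzero, and \(J_{n,n+\kappa}\) has degree exactly \(n\) with constant term \(1\). Hence the zeros \(\tau_{r,\kappa,j}\) are exactly the images \(t=(x-1)/(x+1)\) of the \(n\) zeros \(x_{n,j}\in(-1,1)\) of \(P_n^{(0,\kappa)}\). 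The map \(x\mapsto (x-1)/(x+1)\) is an increasing homeomorphism \((-1,1)\to(-\infty,0)\), so the ordering is preserved.

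The first step is to invoke the standard fact that, for fixed \(\alpha,\beta>-1\), the normalized zero counting measures of \(P_n^{(\alpha,\beta)}\) converge weakly to the arcsine law \(\frac{dx}{\pi\sqrt{1-x^2}}\) on \([-1,1]\). More sharply, writing \(x_{n,j}=\cos\theta_{n,j}\), one has \(\theta_{n,j}=\frac{(j'-\tfrac12)\pi+O(1)}{n+O(1)}\) uniformly, with \(j'\) the index counted from the right, by Szeg\H{o}'s Theorem~6.21.2/8.9.1 \cite{szego1939}. The simplest route is to take the refined angle estimate \(|\theta_{n,j}-j'\pi/n|\leq C/n\) with \(C\) depending only on \(\kappa\). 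I expect this step, namely pinning down a citable uniform statement and matching the indexing direction, to be the main obstacle. If no clean citation fits, the fallback is the Markov–Stieltjes separation theorem: zeros of orthogonal polynomials interlace with the Gauss–Jacobi quadrature nodes, and the Gauss–Jacobi weights are comparable to \(\pi\sqrt{1-x^2}/n\). Either gives weak convergence directly, and weak convergence is exactly the first claim.

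Next, push forward. Since \(t=(x-1)/(x+1)\) with \(x=\cos\theta\), we get \(t=-\tan^2(\theta/2)\). The arcsine measure in \(\theta\) is uniform on \([0,\pi]\). Substituting \(\theta=2\arctan\sqrt{-t}\) gives \(d\theta=\frac{dt'}{\sqrt{-t}(1-t)}\) up to sign, so the uniform law \(d\theta/\pi\) pushes forward to \(\frac{dt}{\pi\sqrt{-t}(1-t)}\) on \((-\infty,0)\). As a check, this integrates to \(1\). For \(f\) continuous with compact support in \((-\infty,0)\), the function \(f\circ\phi\) is continuous with compact support in \((-1,1)\). Weak convergence on \([-1,1]\) then transfers to the displayed limit, and the factor \(1/(r-1)=1/n\) matches the normalization. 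Independence of \(\kappa\) is inherited from the Jacobi statement, since only \(\beta=\kappa\) varies.

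For the quantile form, order the \(t\)-zeros increasingly. The smallest \(t\) corresponds to the smallest \(x\), that is, to \(\theta\) near \(\pi\). So \(j\) counted from the left in \(t\) corresponds to \(\theta_{n,j}\approx\pi(1-j/n)\). If \(j_r/(r-1)\to u\), the uniform angle estimate gives \(\theta\to\pi(1-u)\). Then
\[
t\to-\tan^2\!\bigl(\tfrac{\pi(1-u)}{2}\bigr)=-\cot^2\!\bigl(\tfrac{\pi u}{2}\bigr).
\]
Alternatively, derive the quantile form from weak convergence plus monotonicity. The limiting distribution function \(F(t)=1-\frac{2}{\pi}\arctan\sqrt{-t}\) is continuous and strictly increasing. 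Hence quantiles converge, and \(F^{-1}(u)=-\cot^2(\pi u/2)\), as a direct computation confirms. This second route avoids needing uniform error terms and is the one I would write down.
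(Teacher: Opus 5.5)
Your proposal is correct and follows the paper's route: the Jacobi representation \(\HH_{T_{r,2r+\kappa},\rho}(1,t)=(1-t)^nP_n^{(0,\kappa)}\bigl(\frac{1+t}{1-t}\bigr)\) with \(n=r-1\), the arcsine limit for the zeros of \(P_n^{(0,\kappa)}\) taken from Szeg\H{o}, the push-forward under \(t=(x-1)/(x+1)=-\tan^2(\theta/2)\), and quantile convergence from the continuous, strictly increasing limit distribution \(F\). The only difference is which Szeg\H{o} result you use. The paper uses the Darboux asymptotic (Theorem~8.21.8) on compact angle intervals together with an endpoint-mass argument. Your uniform estimate \(\theta_\nu=(\nu\pi+O(1))/n\) is Theorem~8.9.1, which holds for all \(\alpha,\beta>-1\) and gives the equidistribution directly; Theorem~6.21.2 is stated only for \(|\alpha|,|\beta|\le\frac12\), so it would not cover \(\kappa\geq1\).
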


\begin{proof}
For \(n=r-1\), the preceding corollary gives
\[
\HH_{T_{r,2r+\kappa},\rho}(1,t)
=(1-t)^nP_n^{(0,\kappa)}
  \!\left(\frac{1+t}{1-t}\right).
\]
The normalized zero measures of \(P_n^{(0,\kappa)}\) converge to the
arcsine measure on \((-1,1)\) for every fixed \(\kappa\).
Indeed, the uniform Darboux asymptotic on compact subintervals of
\(0<\theta<\pi\) \cite[Theorem~8.21.8]{szego1939} gives asymptotic
zero-angle density \(1/\pi\).  Expanding these compact subintervals to
\((0,\pi)\) leaves an arbitrarily small proportion of zeros at the
endpoints, yielding the arcsine law under \(x=\cos\theta\).  Pushing it
forward under \(t=(x-1)/(x+1)\) gives the displayed density.  Its distribution
function is \(F(t)=\frac2\pi\arctan(1/\sqrt{-t})\), so the quantile statement
follows.
\end{proof}

\begin{corollary}[Sharp slices for the trapezoidal family]
\label{cor:trapezoid-slice-degree}
For \(0\leq k\leq r-1\),
\[
  [t^k]\HH_{T_{r,s},\rho}(q,t)
  \in q^{k(k-1)}\NN[q],
  \qquad
  \deg_q [t^k]\HH_{T_{r,s},\rho}(q,t)=k(s-k-3).
\]
The coefficient of \(q^{k(s-k-3)}\) is \(1\).
\end{corollary}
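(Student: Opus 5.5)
The plan is to argue directly from the word formula of \cref{thm:trapezoid-family}. There, $[t^k]\HH_{T_{r,s},\rho}(q,t)$ is the sum of $q^{\sum_{j\in D(w)}(j-1)}$ over the words $w\in\mathcal B_{r,s}$ with $|D(w)|=k$. Write the run starts of such a word as $j_1<\cdots<j_k$. The claims then reduce to three things: a lower bound $\sum_i(j_i-1)\ge k(k-1)$, an upper bound $\sum_i(j_i-1)\le k(m-k-1)=k(s-k-3)$ with $m=s-2$, and uniqueness of the word attaining the upper bound. The case $k=0$ is trivial, since only the all-$R$ word contributes and the slice is $1$.

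\emph{Lower bound.} Two consecutive $L$-run starts are separated by at least one $R$, so $j_{i+1}\ge j_i+2$. With $j_1\ge1$ this gives $j_i\ge 2i-1$, hence $\sum_i(j_i-1)\ge\sum_i 2(i-1)=k(k-1)$. Every monomial in the slice is therefore divisible by $q^{k(k-1)}$, which is the first assertion.

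\emph{Upper bound.} I would use the suffix form of the reversed-ballot condition: every suffix of $w$ contains at least as many $R$'s as $L$'s. The suffix starting at $j_i$ contains at least the $k-i+1$ letters $L$ at positions $j_i,\dots,j_k$. It therefore contains at least $k-i+1$ letters $R$, so its length satisfies $m-j_i+1\ge 2(k-i+1)$, that is, $j_i\le m-2k+2i-1$. Summing,
\[
 \sum_{i=1}^k (j_i-1)\le \sum_{i=1}^k (m-2k+2i-2)=k(m-k-1)=k(s-k-3).
\]

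\emph{Equality and attainment.} If equality holds, then $j_i=m-2k+2i-1$ for every $i$, so the run starts are forced to be the positions $m-2k+1,m-2k+3,\dots,m-1$. Each suffix from $j_i$ must then have exactly length $2(k-i+1)$ and contain exactly $k-i+1$ letters $L$, namely the run starts. Every $L$ lies in some run, so no other $L$'s occur, and the word must be $R^{m-2k}(LR)^k$. Conversely, this word is reversed-ballot: every suffix is either $R$-heavy or balanced. Its final balance is $m-2k$, and the condition $m-2k\ge s-2r$ of $\mathcal B_{r,s}$ is equivalent to $k\le r-1$. Hence this word lies in $\mathcal B_{r,s}$ for all $0\le k\le r-1$, including the case $m=2k$ where $j_1=1$. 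So the coefficient of $q^{k(s-k-3)}$ is exactly $1$ and the $q$-degree is $k(s-k-3)$.

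I expect the main obstacle to be the equality analysis: one has to check carefully that tightness of all the suffix inequalities forces single-letter $L$-runs and an all-$R$ prefix. The bounds themselves are routine counting.
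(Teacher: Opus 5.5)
Your proof is correct and follows the paper's argument essentially step for step. You use the same spacing bound $j_i\ge 2i-1$ for the lower bound, and the same ballot count for the upper bound: the paper states it as $p_a\ge 2a$ for positions in the reversed word, which is equivalent to your suffix bound $j_i\le m-2k+2i-1$. You also reach the same unique extremal word $R^{m-2k}(LR)^k$, which lies in $\mathcal B_{r,s}$ exactly when $k\le r-1$.
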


\begin{proof}
For a word \(w\) with \(D(w)=\{j_1<\cdots<j_k\}\), consecutive run starts
are separated by a right letter, so \(j_a\geq 2a-1\). Hence
\[
  \sum_{a=1}^k(j_a-1)\geq k(k-1).
\]
Let \(v\) be the reversal of \(w\), and let
\(p_1<\cdots<p_k\) be the reversed positions corresponding to the elements
of \(D(w)\). Each \(p_a\) is an \(L\)-position of the ballot word \(v\).
Among its first \(p_a\) letters there are at least \(a\) such positions, and
the ballot condition forces at least \(a\) \(R\)'s there; thus \(p_a\geq2a\).
Writing \(m=s-2\),
\[
  \sum_{a=1}^k(j_a-1)=\sum_{a=1}^k(m-p_a)
  \leq \sum_{a=1}^k(m-2a)=k(s-k-3).
\]
Equality forces \(p_a=2a\) for every \(a\). The ballot condition then forces
\(v\) to begin with \((RL)^k\); since there are exactly \(k\) marked
positions, all remaining letters are \(R\). Therefore
\(v=(RL)^kR^{s-2-2k}\), which is admissible precisely when \(k\leq r-1\).
This unique word attains the upper bound.
\end{proof}

\begin{corollary}[Specialization of the trapezoidal family]
\label{cor:trapezoid-specialization}
For \(s\geq2r\geq2\),
\[
  \HH_{T_{r,s},\rho}(1,1)=\binom{s-2}{r-1},
  \qquad
  \deg_t\HH_{T_{r,s},\rho}=r-1,
\]
and
\[
  [t^{r-1}]\HH_{T_{r,s},\rho}(1,t)
  =\binom{s-r-1}{r-1}.
\]
\end{corollary}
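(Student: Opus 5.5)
All three assertions of \cref{cor:trapezoid-specialization} are specializations of results already established, so I will read each one off directly.

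\emph{Total weight.} Setting \(q=1\) in \cref{cor:trapezoid-q-total} gives \(\HH_{T_{r,s},\rho}(1,1)=\binom{s-2}{r-1}\). The telescoping is
\[
1+\sum_{a=1}^{r-1}\Bigl(\binom{m}{a}-\binom{m}{a-1}\Bigr)=\binom{m}{r-1},\qquad m=s-2.
\]
Alternatively, setting \(t=1\) in \cref{cor:trapezoid-qone-profile} and applying Vandermonde gives
\[
\sum_k\binom{r-1}{k}\binom{s-r-1}{k}=\binom{s-2}{r-1}.
\]
Either route suffices, and the two serve as a consistency check on each other.

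\emph{Degree.} By \cref{thm:trapezoid-family}, the exponent of \(t\) attached to a word \(w\) is \(|D(w)|\). Each element of \(D(w)\) begins a run of \(L\)'s. A word in \(\mathcal B_{r,s}\) has final balance \(s-2i\) for its starting row \(i\in[1,r]\), so it contains \(i-1\le r-1\) letters \(L\). Hence \(|D(w)|\le r-1\) and \(\deg_t\le r-1\).

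For the matching lower bound I would use \cref{cor:trapezoid-slice-degree} with \(k=r-1\). It exhibits the word whose reversal is \((RL)^{r-1}R^{s-2r}\); this word is admissible and contributes the monomial \(q^{(r-1)(s-r-2)}t^{r-1}\) with coefficient \(1\). Since all coefficients are nonnegative, no cancellation can occur, so the degree is exactly \(r-1\).

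\emph{Leading coefficient.} The coefficient of \(t^{r-1}\) in \cref{cor:trapezoid-qone-profile} is
\[
\binom{r-1}{r-1}\binom{s-r-1}{r-1}=\binom{s-r-1}{r-1}.
\]
The hypothesis \(s\ge 2r\) gives \(s-r-1\ge r-1\), so this binomial is positive. This gives an independent confirmation that the degree is exactly \(r-1\).

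No step here is a genuine obstacle. The only point requiring care is that the leading \(q=1\) coefficient is nonzero, and \(s\ge2r\) settles it.
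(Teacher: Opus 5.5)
Your proof is correct, but it takes a different route from the paper's.

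\textbf{Your route.} You read the total weight and the top \(q=1\) coefficient off \cref{cor:trapezoid-q-total} and \cref{cor:trapezoid-qone-profile}. Both of these rest on the Gaussian-binomial endpoint decomposition \cref{thm:trapezoid-endpoint}, and hence on the Krattenthaler--Mohanty one-boundary formulas. The profile result also uses the partial Vandermonde identity. Your degree lower bound comes from the extremal word in \cref{cor:trapezoid-slice-degree}.

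\textbf{The paper's route.} The paper argues directly from the word model of \cref{thm:trapezoid-family}.
\begin{itemize}
\item For the total weight, it counts reversed-ballot words with \(\ell\) letters \(L\) by the reflection principle as \(\binom{m}{\ell}-\binom{m}{\ell-1}\), and telescopes over \(\ell\le r-1\).
\item For the top slice, it writes a word with \(r-1\) isolated \(L\)'s as \(R^{a_0}LR^{a_1}\cdots LR^{a_{r-1}}\). The ballot condition becomes \(A_u\le (s-2r)+u\) on the cumulative sums. The shift \(C_u=A_u-u\) then puts these words in bijection with weakly increasing sequences in \([0,s-2r]\), giving \(\binom{s-r-1}{r-1}\).
\item Positivity of that count, under \(s\ge 2r\), then settles the degree.
\end{itemize}

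\textbf{Comparison.} Your version is shorter given what precedes it, and it doubles as a consistency check between the two \(q=1\) formulas. However, it makes this corollary depend on the external path-enumeration inputs. The paper's version is self-contained and elementary. It also gives an independent confirmation of the endpoint coefficients of the profile in \cref{cor:trapezoid-qone-profile}, rather than deriving them from it.

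There is no circularity in your argument: none of the results you cite relies on this corollary. They also cover the degenerate case \(r=1\), so you need no separate treatment there.
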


\begin{proof}
For \(r=1\), the only word is \(R^{s-2}\), including the empty word
when \(s=2\); its numerator is \(1\), proving all three assertions.
Assume now \(r\geq2\).
Write \(m=s-2\). A ballot word with exactly \(\ell\) letters \(L\) is
counted by the reflection principle as
\(\binom{m}{\ell}-\binom{m}{\ell-1}\). The defining final-balance condition
for \(\mathcal B_{r,s}\) is equivalent to \(\ell\leq r-1\). Summing over
\(\ell\) telescopes to \(\binom{m}{r-1}\), proving the first identity.
A word contributes to \(t^{r-1}\) only when \(\ell=r-1\) and every \(L\)
lies in its own run.  Write such a word as
\(R^{a_0}LR^{a_1}L\cdots LR^{a_{r-1}}\) with cumulative sums
\(A_u=a_0+\cdots+a_u\): isolation of the runs means
\(a_1,\ldots,a_{r-2}\geq1\), the reversed-ballot condition is exactly
\(A_u\leq(s-2r)+u\) for \(0\leq u\leq r-2\), and the final run length
\(a_{r-1}\) is then determined and positive.  The substitution
\(C_u=A_u-u\) identifies these data with the weakly increasing sequences
\(0\leq C_0\leq\cdots\leq C_{r-2}\leq s-2r\), of which there are
\(\binom{s-r-1}{r-1}\).  This proves the third identity; since
\(s\geq2r\) makes this count positive and no word has more than \(r-1\)
descents, the \(t\)-degree is exactly \(r-1\).
\end{proof}

\section{Ferrers-cell universality}
\label{sec:ferrers-family}

The trapezoidal boundary is only one instance of a monotone Ferrers
boundary. Let \(r\geq1\) and let
\[
  b_1\geq b_2\geq\cdots\geq b_r,\qquad b_i\geq i,
\]
be integers. Define
\[
  F_{\mathbf b}=\{(i,c):1\leq i\leq r,\ i\leq c\leq b_i\},
\]
with the same order
\[
  (i,c)\leq(j,d)\quad\Longleftrightarrow\quad
  i\geq j\ \text{and}\ c\leq d,
\]
and rank \(\rho(i,c)=c-i\). Put \(M=b_1-1\). For a word
\(w\in\{L,R\}^{M}\), write \(\ell(w)\) for its number of \(L\)'s and set
\(\iota(w)=\ell(w)+1\). Let \(\mathcal W_{\mathbf b}\) consist of the words
with \(\ell(w)\leq r-1\) such that, for every prefix with \(L_k\) letters
\(L\) and \(R_k\) letters \(R\),
\[
  \iota(w)+R_k\leq b_{\iota(w)-L_k}.
\]

\begin{theorem}[Ferrers-cell \(q\)-Zeta formula]
\label{thm:ferrers-family}
For every boundary sequence \(\mathbf b\) as above,
\[
  \HH_{F_{\mathbf b},\rho}(q,t)
  =\sum_{w\in\mathcal W_{\mathbf b}}
    q^{\sum_{j\in D(w)}(j-1)}t^{|D(w)|}.
\]
In particular, \(\HH_{F_{\mathbf b},\rho}(q,t)\in\NN[q,t]\). The
trapezoidal family \(T_{r,s}\) is obtained by \(b_i=s-i\).
\end{theorem}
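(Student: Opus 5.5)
The plan is to repeat the proof of \cref{thm:trapezoid-family} with the Ferrers boundary in place of the trapezoidal one. Adjoin a minimum \(\bottom\) to \(F_{\mathbf b}\) and use the shifted rank \(\widehat\rho(i,c)=c-i+1\). The augmented poset is bounded and graded, with top \((1,b_1)\) of rank \(b_1=M+1\). By \cref{thm:flag-expansion} and \cref{thm:r-labelling}, its numerator is the descent enumerator of any EL-labelling. The minimum-adjunction identity \cref{lem:minimum-shift} then states \(\HH_{\widehat F,\widehat\rho}(q,t)=\HH_{F,\rho}(q,qt)\). Undoing it turns the weight \(q^{\sum_{j\in D}j}t^{|D|}\) into \(q^{\sum_{j\in D}(j-1)}t^{|D|}\), exactly as in the trapezoidal case.

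\textbf{Lattice structure.} I will check that \(\widehat F_{\mathbf b}\) is a lattice with the same formulas as before. The join is \((\min(i,j),\max(c,d))\). If \(i\le j\) and \(c\le d\) this is trivial. Otherwise, say \(i<j\) and \(c<d\); then \(d\le b_j\le b_i\) because \(\mathbf b\) is weakly decreasing, so \((i,d)\) is a cell. This monotonicity is the only place where the Ferrers condition enters. The meet is \((\max(i,j),\min(c,d))\) when the row index is at most the column index, and \(\bottom\) otherwise. In the nonbottom case, \(\min(c,d)\le b_{\max(i,j)}\) follows from the corresponding cell. Lower semimodularity is not needed for the formula, so I will not prove it here.

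\textbf{EL-labelling.} I will reuse the labels \(\lambda(\bottom,(i,i))=i\), \(\lambda((i,c),(i-1,c))=-(i-1)\) and \(\lambda((i,c),(i,c+1))=c+1\). In a nonbottom interval \([(i,c),(j,d)]\), the chain doing all \(L\) moves and then all \(R\) moves stays inside \(F_{\mathbf b}\): the \(L\) moves sit in column \(c\le b_i\le b_{i'}\) for \(i'\le i\), and the \(R\) moves sit in row \(j\), where \(d\le b_j\). This chain is the unique increasing one and is lexicographically first, since any other chain has a negative label after a positive one, and negative labels are smaller. For an interval \([\bottom,(j,d)]\), the candidate is the atom \((k,k)\) followed only by \(R\) moves. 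Increasing forces the row to stay \(k=j\), and \((j,j),\dots,(j,d)\) is legal. Any chain starting from a different atom \((k,k)\) with \(k>j\) must later use an \(L\) move, which has a negative label, so it is not increasing. Lex-minimality holds because the first label \(j\) is the smallest atom label among atoms below \((j,d)\).

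\textbf{Chains and descents.} A maximal chain starts at some atom \((i,i)\), uses \(i-1\) moves \(L\) and \(b_1-i\) moves \(R\), for \(M=b_1-1\) moves in total. Hence \(i=\iota(w)=\ell(w)+1\le r\). After a prefix the cell is \((\iota-L_k,\iota+R_k)\). Membership in \(F_{\mathbf b}\) requires \(\iota+R_k\le b_{\iota-L_k}\); the lower condition \(\iota-L_k\le\iota+R_k\) is automatic. Conversely, every word in \(\mathcal W_{\mathbf b}\) determines such a chain, and its endpoint is \((1,1+M)=(1,b_1)\). So maximal chains are in bijection with \(\mathcal W_{\mathbf b}\).

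For descents: at rank \(1\) there is a descent iff the first move is \(L\), since \(-(i-1)<i\). Between moves, \(R\) then \(L\) gives a descent (positive, then negative). \(L\) then \(L\) gives labels \(-(i-1),-(i-2)\), which increase. \(R\) then \(R\) increases. \(L\) then \(R\) goes from negative to positive, so it increases. The descent set is therefore exactly \(D(w)\), and the weight follows.

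The main point needing care is the EL verification on bottom intervals and the legality of the \(L\)-then-\(R\) chain. Both reduce to the monotonicity \(b_1\ge\dots\ge b_r\), and I expect that to suffice. The specialization \(b_i=s-i\) recovers \(\mathcal B_{r,s}\) by comparison with the inequality \(R_k-L_k\le s-2\iota\).
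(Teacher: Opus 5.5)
Your proposal is correct and follows the paper's proof essentially step for step. Like the paper, you adjoin \(\bottom\), get the coordinate join and meet from the monotonicity of \(\mathbf b\), and reuse the signed labels. You then check the EL property separately on nonbottom and bottom intervals, biject maximal chains with \(\mathcal W_{\mathbf b}\), read the descent set as \(D(w)\), and finish with the flag expansion and the minimum-adjunction shift.

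One minor slip in the join check: the trivially legal case is the comparable one (\(i\geq j\), \(c\leq d\), or its mirror), not \(i\leq j\), \(c\leq d\). That second case is exactly the one needing \(d\leq b_j\leq b_i\), which you do supply in your next sentence.
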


\begin{proof}[Proof of \cref{thm:ferrers-family}]
Adjoin a minimum to \(F_{\mathbf b}\). The same coordinate formulas give
\[
  (i,c)\vee(j,d)=(\min(i,j),\max(c,d))
\]
and
\[
  (i,c)\wedge(j,d)=
  \begin{cases}
    (\max(i,j),\min(c,d)),&\max(i,j)\leq\min(c,d),\\
    \bottom,&\text{otherwise}.
  \end{cases}
\]
The monotonicity of \(\mathbf b\) makes the displayed join and every
nonbottom meet a cell. Label the covers by
\[
  \lambda(\bottom,(i,i))=i,\qquad
  \lambda((i,c),(i-1,c))=-(i-1),\qquad
  \lambda((i,c),(i,c+1))=c+1.
\]
The covers are exactly the displayed left and right moves, and each increases
\(\rho(i,c)=c-i\) by one; thus \(\rho\) is a rank function on \(F_{\mathbf b}\).
On an interval not starting at the minimum, all left labels are negative and
increase as the first coordinate decreases, while all right labels are
positive and increase as the second coordinate increases. Hence the unique
increasing chain takes all available left moves before all right moves and is
lexicographically first. On an interval from the minimum to \((j,d)\), the
increasing chain starts with the least-labelled available atom \((j,j)\),
followed only by right moves.
Thus \(\lambda\) is an EL-labelling.

A maximal chain starts at \((i,i)\), makes \(i-1\) left moves and
\(b_1-i\) right moves, and ends at \((1,b_1)\). Its move word has length
\(M\), and the cell after a prefix is \((i-L_k,i+R_k)\). Therefore the path
stays in \(F_{\mathbf b}\) exactly when the defining inequalities for
\(\mathcal W_{\mathbf b}\) hold, with \(i=\iota(w)\). This gives a bijection
between maximal chains and \(\mathcal W_{\mathbf b}\). The comparison of the
initial atom label, negative left labels, and positive right labels is
unchanged from the trapezoidal case, so the descent set is \(D(w)\). The EL
flag expansion and the minimum-adjunction shift now give the displayed
formula.
\end{proof}

\begin{proposition}[Multivariate descent-set refinement]
\label{prop:ferrers-multivariate-refinement}
Let $M=b_1-1$ and let $\mathbf u=(u_1,\ldots,u_M)$ be commuting
indeterminates.  The multivariate descent enumerator
\[
  \mathcal D_{\mathbf b}(\mathbf u)
  =\sum_{w\in\mathcal W_{\mathbf b}}\prod_{j\in D(w)}u_j
\]
is coefficientwise positive, and the \(q\)-Zeta numerator is its principal
specialization
\begin{equation}
\label{eq:ferrers-multivariate-specialization}
  \HH_{F_{\mathbf b},\rho}(q,t)
  =\mathcal D_{\mathbf b}(t,tq,tq^2,\ldots,tq^{M-1}).
\end{equation}
More generally, for a nonbottom interval $I=[x,y]$ of length $h\geq1$, use the
legal move words \(\mathcal W_{\mathbf b}(x,y)\), local \(RL\)-descent
positions \(D_{x,y}\), and rank normalized at \(x\), as defined explicitly
in \cref{prop:ferrers-interval-word}.  The local
descent enumerator
\[
  \mathcal D_I(\mathbf u)=
  \sum_{w\in\mathcal W_{\mathbf b}(x,y)}
       \prod_{p\in D_{x,y}(w)}u_p
\]
specializes to
\[
  \HH_{I,\rho_{x,y}}(q,t)=
  \mathcal D_I(tq,tq^2,\ldots,tq^{h-1}).
\]
Thus the word model retains the complete flag descent-set data, not only its
rank-weighted \(q\)-Zeta specialization.
\end{proposition}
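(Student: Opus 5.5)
The plan is to rerun the proof of \cref{thm:ferrers-family} and stop before the rank-weighted specialization, keeping the descent sets themselves. Positivity of \(\mathcal D_{\mathbf b}\) needs no argument, since it is a sum of monomials with coefficient one. For \cref{eq:ferrers-multivariate-specialization}, work in the augmented lattice \(\wideP=F_{\mathbf b}\cup\{\bottom\}\). It is bounded, with ranks \(0,\ldots,b_1\), so \(H=M-1\) internal ranks are \([M-1]\).

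By \cref{thm:flag-expansion}, \(\HH_{\wideP,\widehat\rho}=\sum_S\beta_{\wideP}(S)q^{\qsum S}t^{|S|}\). By \cref{thm:r-labelling}, with the EL-labelling from the proof of \cref{thm:ferrers-family}, \(\beta_{\wideP}(S)\) counts maximal chains with descent set \(S\). That proof gives a bijection between maximal chains and \(\mathcal W_{\mathbf b}\) under which the descent ranks form \(D(w)\). Hence
\[
\HH_{\wideP,\widehat\rho}(q,t)=\sum_{w}\prod_{p\in D(w)}q^{p}t=\mathcal D_{\mathbf b}(tq,tq^2,\ldots,tq^{M}).
\]
Before this step I will check the indexing. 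A run start \(j\in D(w)\) with \(j\leq M\) corresponds to a descent at rank \(j\) of the augmented chain. When \(j=1\), this is the descent between the atom label \(\iota\) and the negative label \(-(\iota-1)\). Rank \(M\) would be the top edge, but a run start at position \(M\) is excluded because every word ends in \(R\) (the final cell is \((1,b_1)\), reached by a right move when \(b_1>1\)). Then \cref{lem:minimum-shift} (\(t\mapsto qt\)) gives \(\HH_{F_{\mathbf b},\rho}(q,t)\) by substituting \(t\mapsto q^{-1}t\), which turns \(tq^{p}\) into \(tq^{p-1}\). That is exactly \cref{eq:ferrers-multivariate-specialization}. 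I expect this index bookkeeping, together with the shift direction of \cref{lem:minimum-shift}, to be the only delicate point of the global statement.

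For a nonbottom interval \(I=[x,y]\) with \(x=(i,c)\), \(y=(j,d)\) and \(h=(i-j)+(d-c)\), the restricted labelling is still EL, since EL-labellings restrict to intervals. Maximal chains of \(I\) are the words with \(i-j\) letters \(L\) and \(d-c\) letters \(R\) whose prefixes stay below the boundary, namely \(c+R_k\leq b_{i-L_k}\). These are the words of \(\mathcal W_{\mathbf b}(x,y)\) in \cref{prop:ferrers-interval-word}. All labels are negative for \(L\) and positive for \(R\), and no bottom atom is involved. So the descents are exactly the \(RL\) positions \(p\in[h-1]\), that is \(D_{x,y}(w)\).

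Applying \cref{thm:flag-expansion,thm:r-labelling} to the bounded poset \(I\), with rank normalized at \(x\), gives weight \(\prod_{p}q^{p}t\). This is \(\mathcal D_I(tq,\ldots,tq^{h-1})\), with no minimum shift needed. The case \(h\geq1\) makes \([h-1]\) the correct internal index set, and \(h=1\) gives the empty product, matching numerator \(1\).
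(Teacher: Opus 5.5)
Your route is the paper's, only unpacked. The paper proves this proposition by quoting \cref{thm:ferrers-family} and \cref{prop:ferrers-interval-word} and substituting \(u_j=tq^{j-1}\), resp.\ \(u_p=tq^{p}\). You instead re-run the flag expansion, descent count, chain--word bijection and minimum shift that prove those two results. Your interval argument is correct, and so is your final global formula \(\HH_{\wideP,\widehat\rho}=\mathcal D_{\mathbf b}(tq,\ldots,tq^{M})\).

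The indexing check you call the only delicate point is wrong in two places, however.

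\begin{itemize}
\item The augmented lattice has ranks \(0,\ldots,b_1\). In the convention ``ranks \(0,\ldots,H+1\)'' this gives \(H=b_1-1=M\), which is the maximal rank of \(F_{\mathbf b}\). So the internal ranks are \([M]\), not \([M-1]\).
\item The claim that every admissible word ends in \(R\) is false for general Ferrers boundaries. The top cell \((1,b_1)\) can be entered by a left move from \((2,b_1)\) whenever \(b_2=b_1\). Take the rectangular (type-\(A\)) boundary \(\mathbf b=(3,3,3)\), so \(M=2\). The word \(RL\), from the atom \((2,2)\) via \((2,3)\) to \((1,3)\), is admissible and has \(D(RL)=\{2\}=\{M\}\). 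Its monomial \(qt\) is genuinely needed, since \([t]\HH_{F_{\mathbf b},\rho}=2+q\) by \cref{cor:type-A}.
\end{itemize}

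``Every word ends in \(R\)'' is true for \(\mathcal B_r\) and for trapezoidal boundaries, where \(b_2<b_1\), but not in general. If your argument really discarded run starts at position \(M\), it would lose exactly these terms.

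The repair is to count correctly. A maximal chain has \(M+1\) covers: the atom edge followed by the \(M\) moves. The descent at internal rank \(p\geq2\) compares moves \(p-1\) and \(p\). So rank \(M\) compares the last two moves and is a genuine internal rank, not ``the top edge''. Hence \(D(w)\subseteq[M]\) matches the internal ranks exactly and nothing needs to be excluded. Your displayed formula and the substitution \(t\mapsto t/q\) from \cref{lem:minimum-shift} then go through unchanged and give \cref{eq:ferrers-multivariate-specialization}.
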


\begin{proof}[Proof of \cref{prop:ferrers-multivariate-refinement}]
The first assertion is immediate from the definition: each admissible word
contributes one monomial with coefficient one.  In the global formula of
\cref{thm:ferrers-family}, substituting $u_j=tq^{j-1}$ gives
\[
  \prod_{j\in D(w)}u_j
  =t^{|D(w)|}q^{\sum_{j\in D(w)}(j-1)},
\]
which proves \cref{eq:ferrers-multivariate-specialization}.  For an interval,
the same substitution with $u_p=tq^p$ and the local word formula of
\cref{prop:ferrers-interval-word} gives the final identity.
\end{proof}

\begin{proposition}[Transfer-matrix recursion]
\label{prop:ferrers-transfer-matrix}
Let \(M=b_1-1\), and let \(\sigma\in\{\varnothing,L,R\}\) record the
previous move.  Define polynomials \(G_{j}(i,c;\sigma)\) recursively by
\(G_{0}(i,i;\varnothing)=1\) for \(1\leq i\leq r\), with all other
initial values zero, and, for \(1\leq j\leq M\), by the transitions
\[
\begin{aligned}
G_j(i-1,c;L)&\mathrel{+}=G_{j-1}(i,c;\sigma)\cdot u_j^{\mathbf 1_{\{\sigma\in\{\varnothing,R\}\}}},
&& (i-1,c)\in F_{\mathbf b},\\
G_j(i,c+1;R)&\mathrel{+}=G_{j-1}(i,c;\sigma),
&& (i,c+1)\in F_{\mathbf b},
\end{aligned}
\]
for each \(\sigma\).  Then
\[
  \mathcal D_{\mathbf b}(\mathbf u)
  =\sum_{\sigma\in\{\varnothing,L,R\}}G_M(1,b_1;\sigma).
\]
In particular, the complete multivariate enumerator, and hence the \(q\)-Zeta
numerator by \cref{eq:ferrers-multivariate-specialization}, is computable with
\(O(M\lvert F_{\mathbf b}\rvert)\) polynomial-ring transitions.
\end{proposition}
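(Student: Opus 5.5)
The plan is to read the recursion as a dynamic-programming enumeration of the maximal chains of the augmented lattice $\widehat{F}_{\mathbf b}$, organized by prefix length. By the proof of \cref{thm:ferrers-family}, maximal chains correspond bijectively to words $w\in\mathcal W_{\mathbf b}$. Each such chain is a lattice path: it starts at an atom $(i,i)$, and its $j$th move takes it from the current cell to $(i-1,c)$ (an $L$ move) or to $(i,c+1)$ (an $R$ move). Every intermediate cell lies in $F_{\mathbf b}$, and the path ends at $(1,b_1)$ after exactly $M=b_1-1$ moves.

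The key claim, proved by induction on $j$, is that $G_j(i,c;\sigma)$ equals the sum of $\prod_{p\in D(v)}u_p$ over all length-$j$ move sequences $v$ with the following properties: $v$ starts at some atom, stays inside $F_{\mathbf b}$, ends at $(i,c)$, and has last letter $\sigma$ (with $\sigma=\varnothing$ exactly when $j=0$). Here $D(v)$ is the run-start set of the prefix $v$.

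At $j=0$ the empty prefix sits at an atom $(i,i)$ with weight $1$, which matches the initial values. For the inductive step, extending a prefix $v$ by a letter $x$ gives $D(vx)=D(v)\cup\{j\}$ when $x=L$ and the previous letter is $R$ or absent. Otherwise $D(vx)=D(v)$. This is exactly the factor $u_j^{\mathbf 1_{\{\sigma\in\{\varnothing,R\}\}}}$ attached to $L$ transitions, while $R$ transitions carry no factor. The side conditions $(i-1,c)\in F_{\mathbf b}$ and $(i,c+1)\in F_{\mathbf b}$ are exactly the membership conditions on the new cell. Since the run-start set of a prefix never changes when letters are appended, the weight recorded at step $j$ is final.

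The one point that needs care is that the recursion starts from every atom rather than from the atom $(\iota(w),\iota(w))$ that the word determines. I would argue that a path ending at $(1,b_1)$ after $M$ moves from $(i,i)$ must contain $i-1$ letters $L$. Hence $i=\iota(w)$, and the starting atom is recovered from the word. In the other direction, a word in $\mathcal W_{\mathbf b}$ started at $(\iota(w),\iota(w))$ gives such a path, because the defining prefix inequality $\iota+R_k\le b_{\iota-L_k}$ is precisely cell membership. The lower condition $i-L_k\le i+R_k$ holds automatically, and $\ell(w)\le r-1$ gives $\iota\le r$. Paths ending at $(1,b_1)$ with other lengths do not occur at step $M$, because the rank grows by one per move. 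Summing the claim over $\sigma$ at $(1,b_1)$ with $j=M$ therefore gives $\mathcal D_{\mathbf b}(\mathbf u)$.

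For the complexity statement, each step $j$ processes every state $(i,c,\sigma)$, of which there are $3|F_{\mathbf b}|$. Each state has at most two outgoing transitions, so the total is $O(M|F_{\mathbf b}|)$ polynomial operations. I expect no real obstacle here. The only delicate bookkeeping is the $\sigma=\varnothing$ convention that makes an initial $L$ count as a run start; this matches the rank-$1$ descent coming from the atom label in \cref{thm:ferrers-family}.
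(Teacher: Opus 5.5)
Your proposal is correct and follows essentially the same route as the paper. Both arguments prove by induction on \(j\) that \(G_j(i,c;\sigma)\) is the descent-weighted sum over legal length-\(j\) paths ending at \((i,c)\) with last move \(\sigma\), then sum the three terminal states at \((1,b_1)\) and count \(O(M\lvert F_{\mathbf b}\rvert)\) transitions. Your extra checks, that the starting atom is forced to be \((\iota(w),\iota(w))\) and that the prefix inequalities are exactly cell membership, only spell out the chain--word bijection that the paper takes from the proof of \cref{thm:ferrers-family}.
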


\begin{proof}[Proof of \cref{prop:ferrers-transfer-matrix}]
After \(j\) transitions, \(G_j(i,c;\sigma)\) is the sum of the monomials
attached to all legal length-\(j\) paths ending at \((i,c)\) whose last move is
\(\sigma\).  A left transition creates a descent exactly when it is the first
move or follows a right move, which is precisely the exponent in the first
line; right transitions never create a descent.  Induction on \(j\) proves the
state interpretation.  Every maximal path has length \(M\) and ends at
\((1,b_1)\), so summing the three terminal states gives the claimed
enumerator.  There are \(O(M\lvert F_{\mathbf b}\rvert)\) states and a bounded
number of transitions per state.
\end{proof}

\begin{corollary}[Characteristic polynomial of the augmented Ferrers lattice]
\label{cor:ferrers-characteristic}
Let \(\widehat F_{\mathbf b}=F_{\mathbf b}\sqcup\{\bottom\}\), with the shifted
rank from \cref{lem:minimum-shift}, and let
\[
  X_{\widehat F_{\mathbf b}}(y)
  =\sum_{x\in\widehat F_{\mathbf b}}\mu(\bottom,x)
    y^{b_1-\widehat\rho(x)}
\]
be its characteristic polynomial.  Then
\begin{equation}
\label{eq:ferrers-characteristic}
  X_{\widehat F_{\mathbf b}}(y)
  =y^{b_1}-r y^{b_1-1}+(r-1)y^{b_1-2},
\end{equation}
where the last term is understood to be absent when \(r=1\).
Thus the characteristic polynomial is independent of all lower-boundary
entries \(b_2,\ldots,b_r\).  In particular, for the balanced boundary
\(b_i=2r-i\) and \(r\ge2\),
\[
  X_{\widehat C_r}(y)
  =y^{2r-3}(y-1)(y-(r-1)),
\]
where \(\widehat C_r\) denotes the minimum-augmented type-\(B_r/C_r\)
root-poset lattice.
\end{corollary}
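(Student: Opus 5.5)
We compute the Möbius function \(\mu(\bottom,x)\) on \(\widehat F_{\mathbf b}\) directly from the lattice structure established in the proof of \cref{thm:ferrers-family}, rather than through the flag expansion. The idea is that \(\mu(\bottom,x)\) depends only on the atoms below \(x\) and their joins, and the atoms are the diagonal cells \((i,i)\), \(1\le i\le r\), all of which lie in \(F_{\mathbf b}\) because \(b_i\ge i\). The join of two atoms \((i,i)\) and \((k,k)\) with \(i<k\) is \((i,k)\), which is a cell since \(k\le b_k\le b_i\). So the sublattice generated by the atoms never reaches the lower boundary. This is why the result should be independent of \(b_2,\dots,b_r\).

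The first step is to identify the atoms below a cell \(x=(j,d)\). An atom \((k,k)\) satisfies \((k,k)\le(j,d)\) exactly when \(k\ge j\) and \(k\le d\). Hence the atoms below \(x\) are \((k,k)\) for \(j\le k\le \min(r,d)\).

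The second step applies Rota's crosscut theorem (or Weisner's theorem), using the atom crosscut of the interval \([\bottom,x]\). This gives
\[
\mu(\bottom,x)=\sum_{N}(-1)^{|N|},
\]
where the sum runs over nonempty sets \(N\) of atoms below \(x\) whose join is \(x\), plus the empty set when \(x=\bottom\). The join of a set \(N\) of atoms \((k,k)\) is \((\min N,\max N)\). So \(x=(j,d)\) is a join of atoms only if \(d\le r\), and in that case \(N\) ranges over subsets of \(\{j,\dots,d\}\) containing both \(j\) and \(d\). The cases are as follows.
\begin{itemize}
\item If \(d=j\), then \(x\) is an atom and \(\mu(\bottom,x)=-1\).
\item If \(d=j+1\), the only subset is \(\{j,j+1\}\), so \(\mu(\bottom,x)=+1\).
\item If \(d\ge j+2\), the free interior elements make the alternating sum vanish, so \(\mu(\bottom,x)=0\).
\item If \(d>r\) and \(x\) is not an atom, \(x\) is not a join of atoms, so \(\mu(\bottom,x)=0\).
\end{itemize}
As an alternative that avoids citing the crosscut theorem, one can verify these values by downward induction using \(\sum_{\bottom\le z\le x}\mu(\bottom,z)=0\). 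On \([\bottom,(j,d)]\), this sum counts \(1\), minus the number of atoms \(\min(r,d)-j+1\), plus the number of covers \((k,k+1)\) with \(j\le k<\min(r,d)\), which is \(\min(r,d)-j\). The total is zero.

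The third step assembles the characteristic polynomial, using \(\widehat\rho(j,d)=d-j+1\). The contributions are:
\begin{itemize}
\item \(\bottom\) contributes \(y^{b_1}\);
\item the \(r\) atoms contribute \(-r\,y^{b_1-1}\);
\item the \(r-1\) cells \((k,k+1)\) with \(1\le k\le r-1\) contribute \((r-1)y^{b_1-2}\).
\end{itemize}
Each cell \((k,k+1)\) lies in \(F_{\mathbf b}\) because \(k+1\le r\le b_r\le b_k\), with \(b_r\ge r\) by hypothesis. When \(r=1\) there are no such cells, which gives \cref{eq:ferrers-characteristic}.

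The balanced case \(b_1=2r-1\) then factors as
\[
y^{2r-3}\bigl(y^2-ry+(r-1)\bigr)=y^{2r-3}(y-1)\bigl(y-(r-1)\bigr).
\]

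The only delicate point is the Möbius computation for cells \((j,d)\) with \(d>r\), where the lower boundary is active. The induction avoids this issue, because it only needs the count of atoms and of the cells \((k,k+1)\) below \(x\), and both counts depend only on \(j\) and \(\min(r,d)\). The paper's remark that this is an "initial-descent analysis" suggests an alternative route: count maximal chains of \([\bottom,x]\) with no descents, or with all descents, via the EL-labelling. However, the direct lattice computation above is shorter and is the one I would write.
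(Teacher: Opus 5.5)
Your proposal is correct, but it takes a different route from the paper.

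\textbf{The paper's route.} The paper never computes $\mu(\bottom,x)$ pointwise. It reads the characteristic polynomial off the flag $h$-numbers of initial rank segments, using Chapoton's $q=0$ identity \cite[Lemma~6.1 and Theorem~6.2]{chapoton2024}. It evaluates those numbers with the signed EL-labelling:
\begin{itemize}
\item $\beta(\varnothing)=1$, from the unique increasing chain;
\item $\beta(\{1\})=r-1$, from the words $L^{i-1}R^{b_1-i}$ with $i\geq 2$;
\item $\beta(\{1,\ldots,j\})=0$ for $j\geq 2$, because no move word has descents at both ranks $1$ and $2$.
\end{itemize}
This gives $y^{b_1}X(1/y)=1-ry+(r-1)y^2$, which encodes only the rank sums of the M\"obius function.

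\textbf{Your route.} You use the atom crosscut. Alternatively, and more self-containedly, you check that your proposed values satisfy $\sum_{\bottom\leq z\leq x}\mu(\bottom,z)=0$ for every $x\neq\bottom$; uniqueness of the solution to this triangular recursion then pins down $\mu$. Note that this is induction upward in rank, not ``downward''.

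\textbf{Comparison.} Your argument is more elementary: it needs neither the EL machinery nor the external flag-vector identity. It also yields strictly more. You show that $\mu(\bottom,\cdot)$ is supported on $\bottom$, the $r$ diagonal atoms, and the $r-1$ cells $(k,k+1)$ with $k+1\leq r$. All of these lie in the region $c\leq r$, which every admissible shape contains. So independence from $b_2,\ldots,b_r$ becomes visibly structural rather than the outcome of a descent count. Your computation also gives \cref{cor:ferrers-principal-characteristic} immediately by restriction to $[\bottom,y]$: there only $\bottom$, the $a_y$ atoms, and $a_y-1$ cells $(k,k+1)$ carry nonzero M\"obius values.

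What the paper's route buys instead is coherence with its main theme. There the characteristic polynomial is one more specialization of the same initial-descent data of the ballot model, which serves as a consistency check on the EL description.
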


\begin{proof}
For \(r=1\), the augmented poset is a chain, with characteristic polynomial
\(y^{b_1}-y^{b_1-1}\).  Hence assume \(r\geq2\).
The augmented lattice has maximal rank \(b_1\).  By
\cref{thm:r-labelling}, its flag \(h\)-number \(\beta(S)\) counts maximal
chains with descent set \(S\).  The path-word description in
\cref{thm:ferrers-family} shows that \(\beta(\varnothing)=1\) and
\(\beta(\{1\})=r-1\): the latter are exactly the chains that start with a
left move and then make all remaining left moves before the right moves.
No descent set can contain both \(1\) and \(2\), and hence
\(\beta(\{1,\ldots,j\})=0\) for every \(j\ge2\).  Applying
Chapoton's \(q=0\) characteristic-polynomial identity
\cite[Lemma~6.1 and Theorem~6.2]{chapoton2024} together with its
flag-vector form gives
\[
  y^{b_1}X_{\widehat F_{\mathbf b}}(1/y)
  =1-r y+(r-1)y^2.
\]
Replacing \(y\) by \(1/y\) and multiplying by \(y^{b_1}\) yields
\cref{eq:ferrers-characteristic}.
\end{proof}

\begin{corollary}[Characteristic polynomials of principal ideals]
\label{cor:ferrers-principal-characteristic}
For a cell \(y=(j,d)\in F_{\mathbf b}\), let
\[
  h_y=d-j+1,
  \qquad
  a_y=\min\{r,d\}-j+1.
\]
Then the principal ideal
\([\bottom,y]\subseteq\widehat F_{\mathbf b}\) has characteristic polynomial
\[
  X_{[\bottom,y]}(z)
  =z^{h_y}-a_y z^{h_y-1}+(a_y-1)z^{h_y-2},
\]
where the final term is absent when \(a_y=1\).  In particular, this
polynomial is independent of the intermediate boundary values met below
\(y\).
\end{corollary}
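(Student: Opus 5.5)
The plan is to repeat the argument of \cref{cor:ferrers-characteristic} for the bounded interval \([\bottom,y]\), with the global path-word description replaced by the word model of \cref{thm:trapezoid-principal-endpoint} and \cref{thm:ferrers-family}. First I would check that the interval is itself a bounded graded lattice of rank \(h_y=\widehat\rho(y)=d-j+1\) carrying the restricted labelling \(\lambda\). Restricting an EL-labelling to an interval gives an EL-labelling, so \cref{thm:r-labelling} applies: each flag \(h\)-number \(\beta_{[\bottom,y]}(S)\) counts the maximal chains of \([\bottom,y]\) with descent set \(S\).

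Next I would describe these maximal chains. Such a chain starts at an atom \((k,k)\le y\) and then makes \(k-j\) left moves and \(d-k\) right moves inside \(F_{\mathbf b}\). An atom \((k,k)\) lies below \(y\) exactly when \(j\le k\le d\) and \(k\le r\), so the admissible atoms are \(k=j,\dots,\min\{r,d\}\). This gives \(a_y\) of them, and every one is a genuine cell because \(b_k\ge k\).

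As in the proofs of \cref{thm:ferrers-family} and \cref{thm:trapezoid-principal-endpoint}, the descents of a chain form the run-start set \(D(w)\) of its move word \(w\). The counts needed are:
\begin{itemize}
\item \(\beta(\varnothing)=1\), from the unique increasing chain, namely atom \((j,j)\) followed by right moves.
\item \(\beta(\{1\})=a_y-1\). These chains use an atom \((k,k)\) with \(k>j\), make all \(k-j\) left moves first, and then make the right moves. I must check that this path stays in \(F_{\mathbf b}\): the cells \((k',k)\) with \(j\le k'\le k\) satisfy \(k\le d\le b_j\), and the monotonicity of \(\mathbf b\) needs care here. I would argue instead that \((k',k)\le y\) and \((k',k)\) is the meet of \((k,k)\vee\) cells, or more simply that \(k'\le k\le\min(b_{k'},\dots)\). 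Since \(k\le b_k\le b_{k'}\) for \(k'\le k\), the cell is legal, and the subsequent right moves along row \(j\) up to \(d\le b_j\) are legal too.
\item \(\beta(\{1,\dots,i\})=0\) for \(i\ge2\), because two consecutive run starts are impossible.
\end{itemize}

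Finally I would invoke Chapoton's \(q=0\) identity \cite[Lemma~6.1 and Theorem~6.2]{chapoton2024} exactly as in \cref{cor:ferrers-characteristic}. That identity expresses the reversed characteristic polynomial through the flag numbers \(\beta(\{1,\dots,i\})\) with alternating signs, giving
\[
z^{h_y}X_{[\bottom,y]}(1/z)=1-a_yz+(a_y-1)z^2 .
\]
Inverting this yields the stated formula, with the degenerate cases handled separately: when \(a_y=1\) the \(z^{h_y-2}\) term vanishes, and when \(h_y=1\) the interval is a chain. Independence from the intermediate boundary values is then visible, since only \(j\), \(d\) and \(r\) enter.

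The main obstacle is verifying the exact sign and indexing convention of Chapoton's identity in the interval setting. To settle it I would check the formula directly on a small interval: for \(a_y=1\) the interval \([\bottom,y]\) is a chain, and \(X=z^{h_y}-z^{h_y-1}\) is correct.
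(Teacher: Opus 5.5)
Your proposal is correct and follows essentially the same route as the paper. You restrict the signed EL-labelling to \([\bottom,y]\), count the \(a_y\) atoms \((k,k)\) with \(j\le k\le\min\{r,d\}\), and read off \(\beta(\varnothing)=1\), \(\beta(\{1\})=a_y-1\) (one word \(L^{k-j}R^{d-k}\) for each \(k>j\)) and \(\beta(\{1,\ldots,i\})=0\) for \(i\ge2\); then you apply Chapoton's \(q=0\) identity. Your final legality check \(k\le b_k\le b_{k'}\) is the right one, and the aside about meets is unnecessary. The sign convention you are worried about can be confirmed directly by rank selection, which gives \(\sum_{\widehat\rho(x)\le i}\mu(\bottom,x)=(-1)^i\beta(\{1,\ldots,i\})\) for \(0\le i<h_y\).
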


\begin{proof}
If \(a_y=1\), the interval is a chain and its characteristic polynomial
is \(z^{h_y}-z^{h_y-1}\).  Assume now \(a_y\geq2\).
The interval \([\bottom,y]\) inherits the signed EL-labelling.  Its unique
increasing maximal chain starts at the least-labelled atom \((j,j)\) and
then uses right moves, so its empty descent set occurs once.  The atoms of
the interval are exactly \((i,i)\) with
\(j\leq i\leq\min\{r,d\}\), hence there are \(a_y\) of them.  A maximal chain
has descent set \(\{1\}\) precisely when it starts at one of the other
\(a_y-1\) atoms and makes all its left moves before its right moves.  No
descent set contains both \(1\) and \(2\), so all initial descent sets
\(\{1,\ldots,k\}\) with \(k\geq2\) have zero flag \(h\)-number.  Chapoton's
\(q=0\) characteristic-polynomial identity
\cite[Lemma~6.1 and Theorem~6.2]{chapoton2024} now gives
\[
 z^{h_y}X_{[\bottom,y]}(1/z)
 =1-a_y z+(a_y-1)z^2,
\]
which is equivalent to the displayed formula.
\end{proof}

\begin{corollary}[Type-\(A\) companion formula]
\label{cor:type-A}
Let \(P_r^A=\Phi^+(A_r)\) be the standard positive-root poset of type
\(A_r\), graded by root height minus one.  Then
\[
  \HH_{P_r^A,\rho}(q,t)
  =\sum_{w\in\{L,R\}^{r-1}}
    q^{\sum_{j\in D(w)}(j-1)}t^{|D(w)|}.
\]
In particular, its \(q\)-Zeta numerator is coefficientwise nonnegative for every
\(r\geq1\), and
for \(1\leq k\leq\lfloor r/2\rfloor\),
\[
 [t^k]\HH_{P_r^A,\rho}(q,t)
 =q^{k(k-1)}\sum_{h=k}^{r-k}
   \qbinom{h-1}{k-1}\qbinom{r-h}{k}.
\]
The coefficients vanish for \(k>\lfloor r/2\rfloor\), while the constant
coefficient is \(1\).  At \(q=1\), the convolution reduces to
\[
  \HH_{P_r^A,\rho}(1,t)
  =\sum_{k=0}^{\lfloor r/2\rfloor}\binom{r}{2k}t^k
  =\frac{(1+\sqrt t)^r+(1-\sqrt t)^r}{2}.
\]
For \(r\geq2\), this polynomial has exactly
\(\lfloor r/2\rfloor\) distinct negative real zeros.
\end{corollary}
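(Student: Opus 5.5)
The plan is to realise \(P_r^A\) as the rectangular Ferrers-cell poset and then read everything off \cref{thm:ferrers-family}. With \(\alpha_i=e_i-e_{i+1}\), the map \(e_i-e_j\mapsto(i,j-1)\) for \(1\le i<j\le r+1\) identifies \(\Phi^+(A_r)\) with \(F_{\mathbf b}\) for the constant boundary \(b_i=r\). Indeed, the simple-root coefficient vector of \((i,c)\) is the indicator of \([i,c]\). Containment of intervals is exactly \(i\geq j\) and \(c\le d\), and height minus one is \(c-i\), as in the first case of \cref{lem:cell-model}.

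Here \(M=r-1\). The constraint \(\iota(w)+R_k\le b_{\iota-L_k}=r\) reads \(\ell(w)+1+R_k\le r\). It holds automatically, since \(R_k\le r-1-\ell(w)\), and \(\ell(w)\le r-1\) is automatic as well. So \(\mathcal W_{\mathbf b}=\{L,R\}^{r-1}\), which gives the first display and positivity.

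For the slice formula, I enumerate words of length \(n=r-1\) with exactly \(k\) \(L\)-runs starting at \(j_1<\cdots<j_k\), weighted by \(q^{\sum(j_a-1)}\). I would decompose the word as \(R^{x_0}L^{y_1}R^{x_1}\cdots L^{y_k}R^{x_k}\) with \(y_a\ge1\), \(x_1,\dots,x_{k-1}\ge1\) and \(x_0,x_k\ge0\). Let \(h\) be the total number of \(L\)'s plus one, or choose the split that matches the target convolution; the aim is to reproduce \(\sum_h\qbinom{h-1}{k-1}\qbinom{r-h}{k}\).

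A cleaner route is the following. Write \(j_a-1=x_0+\sum_{b<a}(y_b+x_b)\), so \(\sum_a(j_a-1)=\sum_{a}(k-a+1)x_{a-1}+\sum_{b}(k-b)y_b\), with \(x_a\) counted for \(a\le k-1\). The \(L\)-part contributes weights \((k-b)(y_b-1)\) plus the constant \(\sum_b(k-b)=\binom k2\). The \(R\)-part gives \(\sum_{a=0}^{k-1}(k-a)x_a\); after shifting \(x_a\mapsto x_a-1\) for \(1\le a\le k-1\) it produces a further constant \(\sum_{a=1}^{k-1}(k-a)=\binom k2\). Together these give \(q^{k(k-1)}\).

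Let \(h-1\) be the total excess \(\sum(y_b-1)\) of the \(L\)-runs plus \(k-1\); I would adjust the indexing so that \(h\) counts the \(L\)-letters. Then the \(L\)-excess generating function, with weights \(0,1,\dots,k-1\) on the \(k\) parts, is the partition count in a \(k-1\) by \((h-k)\) box, namely \(\qbinom{h-1}{k-1}\). The \(R\)-part, with weights \(1,\dots,k\) and a free weight-\(0\) part \(x_k\) absorbing the remainder, is \(\qbinom{r-h}{k}\). Summing over \(h\) gives the convolution.

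I expect this bookkeeping of indices to be the main obstacle: getting the exact ranges \(h=k,\dots,r-k\) and the two Gaussian binomials right, with parts allowed to vanish versus forced positive. I would verify it against \(r=3,4\) before finalising. Vanishing for \(k>\lfloor r/2\rfloor\) follows because \(k\) runs need at least \(2k-1\le r-1\) letters, and the empty set of runs gives constant term \(1\).

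At \(q=1\) I would instead count directly. A word of length \(r-1\) with \(k\) \(L\)-runs corresponds to a choice of \(2k\) positions among \(r\) boundary slots: pad the word with \(R\) at both ends, giving a sequence of length \(r+1\) with \(r\) gaps, and record the gaps where the letter changes. This yields \(\binom{r}{2k}\). The identity with \(((1+\sqrt t)^r+(1-\sqrt t)^r)/2\) is the even part of the binomial theorem. Its zeros come from \((1+\sqrt t)/(1-\sqrt t)\) being an \(r\)-th root of \(-1\). Setting \(\sqrt t=i\tan(\theta/2)\) with \(\theta=(2m+1)\pi/r\) gives \(t=-\tan^2((2m+1)\pi/(2r))\) for \(0\le m<\lfloor r/2\rfloor\). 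These are distinct and negative, and their number matches the degree \(\lfloor r/2\rfloor\).
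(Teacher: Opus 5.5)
Your proposal is correct and follows essentially the paper's route: the same identification with the rectangular boundary \((r,\ldots,r)\), the same gap-counting for \(\binom{r}{2k}\), and the same roots-of-\(-1\) argument for the zeros. For the slice, the paper cites the rectangular case of \cref{cor:ferrers-envelope}, whose proof is exactly your run-composition computation; your bookkeeping is already right as written, with \(h\) the number of \(L\)'s and the range \(k\le h\le r-k\) coming from \(h\ge k\) and the shifted \(R\)-total \(r-h-k\ge 0\).
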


\begin{proof}
Write the positive roots of \(A_r\) as
\(\alpha_{i,j}=e_i-e_{j+1}\) with \(1\leq i\leq j\leq r\).  The map
\(\alpha_{i,j}\mapsto(i,j)\) identifies the root order with
\[
  (i,c)\leq(j,d)\quad\Longleftrightarrow\quad i\geq j,\ c\leq d
\]
on \(F_{\bar{\mathbf b}}\), where
\(\bar{\mathbf b}=(r,\ldots,r)\), and it preserves the rank
\(\operatorname{ht}(\alpha_{i,j})-1=j-i\).  The Ferrers theorem therefore
gives the word formula.  For this constant boundary every word of length
\(r-1\) is admissible.  A binary word of length \(r-1\) with \(k\) \(L\)-runs
is specified by the \(2k\) alternating endpoints of those runs among the
\(r\) gaps, so there are \(\binom{r}{2k}\) such words.  Summing over \(k\)
  and applying the binomial theorem yields the final identity.  The displayed
  coefficient convolution is precisely the rectangular case of
  \cref{cor:ferrers-envelope}; at \(q=1\), Vandermonde's identity gives
  \(\binom{r}{2k}\).  To locate
  the zeros, put \(x=\sqrt t\).  Apart from the exceptional value
  \(\zeta=-1\), the equation
  \((1+x)^r+(1-x)^r=0\) is equivalent to
  \[
    \left(\frac{1+x}{1-x}\right)^r=-1.
  \]
  Thus \(x=(\zeta-1)/(\zeta+1)\), where \(\zeta\) runs over the roots of
  \(\zeta^r=-1\) with \(\zeta\ne-1\).  Since
  \(\lvert\zeta\rvert=1\), each such \(x\) is purely imaginary; conjugate
  values of \(\zeta\) give the same negative value of \(t=x^2\), and no
  other identifications occur.  The resulting number of distinct roots is
  \(\lfloor r/2\rfloor\), as claimed.
\end{proof}

\begin{corollary}[Type-$A$ rank spectrum]
\label{cor:type-A-spectrum}
Write
\[
  A_r(t)=\HH_{P_r^A,\rho}(1,t)
  =\sum_{k=0}^{\lfloor r/2\rfloor}\binom{r}{2k}t^k
  \qquad (r\geq1),
\]
and set \(A_0(t)=1\).  Then
\[
  \sum_{r\geq0}A_r(t)z^r
  =\frac{1-z}{1-2z+(1-t)z^2},
\]
so that \(A_1(t)=1\), \(A_2(t)=1+t\), and
\[
  A_r(t)=2A_{r-1}(t)-(1-t)A_{r-2}(t)
  \qquad (r\geq2).
\]
Moreover, the negative zeros of \(A_r\) and \(A_{r+1}\) strictly
interlace on \((-\infty,0)\), in the usual sense for polynomials whose
degrees differ by at most one.
If \(\xi_{r,1},\ldots,\xi_{r,d_r}\) are the zeros, where
\(d_r=\lfloor r/2\rfloor\), then their normalized zero-counting measures
have the same limit as in the type-\(B/C\) case:
for every \(f\in C_c((-\infty,0))\),
\[
  \lim_{r\to\infty}\frac1{d_r}\sum_{j=1}^{d_r}f(\xi_{r,j})
  =
  \frac1\pi\int_{-\infty}^{0}
      \frac{f(t)}{\sqrt{-t}\,(1-t)}\,dt.
\]
When the zeros are ordered increasingly and \(j_r/d_r\to u\in(0,1)\),
\[
  \xi_{r,j_r}\longrightarrow
  -\cot^2\!\left(\frac{\pi u}{2}\right).
\]
\end{corollary}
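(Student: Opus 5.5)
The plan is to work directly from the closed form $A_r(t)=\tfrac12\bigl((1+\sqrt t)^r+(1-\sqrt t)^r\bigr)$ of \cref{cor:type-A}; this expression is also valid for $r=0$.

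\emph{Generating function and recurrence.} Put $x=\sqrt t$. Then
\[
\sum_{r\ge0}A_r z^r=\tfrac12\Bigl(\tfrac{1}{1-(1+x)z}+\tfrac{1}{1-(1-x)z}\Bigr).
\]
The common denominator is $(1-(1+x)z)(1-(1-x)z)=1-2z+(1-t)z^2$. The numerator is $\tfrac12\bigl(2-2z\bigr)=1-z$. The recurrence $A_r=2A_{r-1}-(1-t)A_{r-2}$ for $r\ge2$ then follows by coefficient extraction, and one reads off $A_1=1$ and $A_2=1+t$.

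\emph{Zeros.} From the proof of \cref{cor:type-A}, the zeros are $t=x^2$, where $x=(\zeta-1)/(\zeta+1)$ and $\zeta=e^{i\theta}$ with $\theta=(2j-1)\pi/r$. Since $(\zeta-1)/(\zeta+1)=i\tan(\theta/2)$, this gives
\[
\xi=-\tan^2\!\bigl((2j-1)\pi/(2r)\bigr),\qquad 1\le j\le\lfloor r/2\rfloor.
\]
Equivalently, these are $-\cot^2(\phi)$ with $\phi=(2k-1)\pi/(2r)$ ranging over $(0,\pi/2)$. Using the increasing bijection $\phi\mapsto-\cot^2\phi$ from $(0,\pi/2)$ onto $(-\infty,0)$, interlacing of $A_r$ and $A_{r+1}$ reduces to interlacing of the angle sets $\{(2k-1)\pi/(2r)\}\cap(0,\pi/2)$ and $\{(2k-1)\pi/(2r+2)\}\cap(0,\pi/2)$. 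The inequalities
\[
\frac{(2k-1)\pi}{2r+2}<\frac{(2k-1)\pi}{2r}<\frac{(2k+1)\pi}{2r+2}
\]
hold because $(2k-1)(2r+2)>(2k-1)2r$ and $(2k-1)2r+2r<(2k+1)(2r)+\dots$. This last one is equivalent to $2k-1<2r/2+\dots$, i.e.\ $(2k+1)r-(2k-1)(r+1)=2r-2k+1>0$. That condition holds because $k\le r/2$.

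The counts $\lfloor r/2\rfloor$ and $\lfloor (r+1)/2\rfloor$ differ by at most one. One must check at the endpoint $\pi/2$ which set gains the extra point: the angle $\pi/2$ itself, which occurs for odd $r$, corresponds to $t=0$, the excluded $\zeta=-1$ case, and is discarded. This endpoint bookkeeping is the main fiddly step, but it is purely elementary.

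\emph{Asymptotics.} The angles $(2k-1)\pi/(2r)$, for $k\le d_r$, are equidistributed on $(0,\pi/2)$ with density $2/\pi$ after normalizing by $d_r\sim r/2$. For $f\in C_c$, the Riemann sum therefore converges to
\[
\frac{2}{\pi}\int_0^{\pi/2}f(-\cot^2\phi)\,d\phi .
\]
Substituting $t=-\cot^2\phi$ gives $dt=2\cot\phi\csc^2\phi\,d\phi$, and $\sqrt{-t}(1-t)=\cot\phi\csc^2\phi$. Hence $d\phi=dt/\bigl(2\sqrt{-t}(1-t)\bigr)$, which yields exactly the displayed density $\frac1\pi\,\frac{1}{\sqrt{-t}(1-t)}$.

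For the quantile statement, in increasing order the $j$-th zero has angle index $k=d_r+1-j$. Its angle is therefore approximately $(1-u)\pi/2$, and
\[
-\cot^2\bigl((1-u)\pi/2\bigr)=-\tan^2(\pi u/2).
\]
The expected answer is $-\cot^2(\pi u/2)$, so I must recheck the orientation. As $\phi$ increases toward $\pi/2$, $-\cot^2\phi$ increases toward $0$, so increasing zeros correspond to increasing $\phi$. Thus $k=j$, the angle is approximately $u\pi/2$, and the limit is $-\cot^2(\pi u/2)$, as stated.
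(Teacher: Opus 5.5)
Your route is the paper's: the closed form from the type-$A$ corollary, two geometric series, the explicit zeros $-\tan^2\bigl((2j-1)\pi/(2r)\bigr)$ obtained from $\zeta^r=-1$, interlacing of the angle grids, and a midpoint Riemann sum. The generating function, the recurrence, your derivation of $-\tan^2$, and the inequality $2r-2k+1>0$ are all fine.

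\textbf{The error.} The step that fails is the rewriting ``equivalently, these are $-\cot^2\phi$ with $\phi=(2k-1)\pi/(2r)$.'' Since $-\tan^2\theta=-\cot^2(\pi/2-\theta)$ and $\pi/2-(2j-1)\pi/(2r)=(r-2j+1)\pi/(2r)$, the reflected grid is again the odd-midpoint grid only when $r$ is even. For odd $r$ it is $\{k\pi/r\}$. Concretely, $A_3(t)=1+3t$ has zero $-1/3=-\tan^2(\pi/6)$, whereas your formula gives $-\cot^2(\pi/6)=-3$.

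The same confusion produces a wrong endpoint explanation. The excluded angle $\pi/2$ (that is, $\zeta=-1$, odd $r$) gives $x=i\tan(\pi/2)=\infty$. It therefore reflects the drop of $(1+x)^r+(1-x)^r$ to degree $r-1$; it does not correspond to $t=0$, and indeed $A_r(0)=1$. As written, your interlacing step always involves one odd index, so it compares against a wrong zero set. Likewise, your Riemann sum runs over the wrong points whenever $r$ is odd.

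\textbf{The repair.} The fix is immediate and is what the paper does: keep $\xi=-\tan^2\theta$ with $\theta\in S_r=\{(2k-1)\pi/(2r)\}\cap(0,\pi/2)$.
\begin{itemize}
\item \emph{Interlacing.} The map $\theta\mapsto-\tan^2\theta$ is a strictly decreasing bijection $(0,\pi/2)\to(-\infty,0)$, so your interlacing of $S_r$ and $S_{r+1}$ transfers directly. For $r=2m$, both sets have $m$ points, the smallest angle lies in $S_{2m+1}$ and the largest in $S_{2m}$. For $r=2m+1$, the extra angle $(2m+1)\pi/(4m+4)$ of $S_{2m+2}$ is the largest. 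Your inequality covers every needed comparison.
\item \emph{Limit measure.} The normalized counting measure of $S_r$ tends to the uniform law on $(0,\pi/2)$, which is invariant under $\theta\mapsto\pi/2-\theta$. Hence $\frac2\pi\int_0^{\pi/2}f(-\tan^2\theta)\,d\theta=\frac2\pi\int_0^{\pi/2}f(-\cot^2\phi)\,d\phi$, and your Jacobian computation then finishes the argument.
\item \emph{Quantiles.} Increasing zeros correspond to decreasing angle index, $k=d_r+1-j$, with angle $\approx(1-u)\pi/2$. Then $-\tan^2\bigl((1-u)\pi/2\bigr)=-\cot^2(\pi u/2)$. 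So your first indexing was the correct one; the ``recheck'' only appeared to work because it used the mistaken $\cot$ parametrization.
\end{itemize}
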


\begin{proof}
The closed form in \cref{cor:type-A} gives
\[
  A_r(t)=\frac{(1+\sqrt t)^r+(1-\sqrt t)^r}{2}.
\]
Summing the two geometric series proves the displayed generating function
and its recurrence.  For the zero statement, put \(d_r=\lfloor r/2\rfloor\).
The finite zeros are
\[
  \eta_{r,j}=-\tan^2\!\left(\frac{(2j-1)\pi}{2r}\right),
  \qquad 1\leq j\leq d_r.
\]
Indeed, the equation \(A_r(t)=0\), with \(x=\sqrt t\), is equivalent to
\[
  \left(\frac{1+x}{1-x}\right)^r=-1,
\]
and the nonreal unit roots of the right-hand side give the displayed
purely imaginary \(x\)'s and hence the negative values \(\eta_{r,j}\).
This sequence is decreasing; the increasing ordering used in the statement
is \(\xi_{r,j}=\eta_{r,d_r+1-j}\).
The angles \((2j-1)\pi/r\) for rank \(r\) and rank \(r+1\) alternate.  If
\(r=2m\), then
\[
 \frac{(2j-1)\pi}{2m+1}<\frac{(2j-1)\pi}{2m}
 <\frac{(2j+1)\pi}{2m+1}\quad(1\leq j<m),
\]
and the final rank-\((2m+1)\) angle precedes the final rank-\(2m\) angle.
If \(r=2m+1\), the same inequalities with denominators \(2m+1\) and
\(2m+2\) hold for \(1\leq j\leq m\), and there is one additional
rank-\((2m+2)\) angle at the endpoint.  Since
\(-\tan^2(\theta/2)\) is strictly decreasing on \((0,\pi)\), the negative
zeros alternate as claimed.  Finally, the angles
\(\theta_{r,j}=(2j-1)\pi/(2r)\) form a midpoint Riemann grid on
\((0,\pi/2)\), and \(d_r/r\to1/2\).  Hence
\[
 \frac1{d_r}\sum_{j=1}^{d_r}f(\eta_{r,j})
 \longrightarrow
 \frac2\pi\int_0^{\pi/2} f(-\tan^2\theta)\,d\theta.
\]
The substitution \(t=-\tan^2\theta\) gives the displayed density.  Its
distribution function is \(F(t)=\frac2\pi\arctan(1/\sqrt{-t})\), so
quantile convergence yields the final formula.
\end{proof}

\begin{remark}[The mechanism is genuinely non-distributive]
\label{rem:ferrers-nondistributive}
For \(r\geq3\), the augmented Ferrers-cell lattice is not distributive.  Indeed,
with \(x=(2,2)\), \(y=(1,1)\), and \(z=(r,r)\), one has
\[
  x\wedge(y\vee z)=x,
  \qquad
  (x\wedge y)\vee(x\wedge z)=\bottom.
\]
Thus the positivity theorem is not a disguised application of the standard
distributive-lattice criterion; it uses the explicit signed EL-labelling.
\end{remark}

\begin{corollary}[Interval-stable positivity]
\label{cor:ferrers-interval-positivity}
Let \(I=[x,y]\) be any interval of the augmented Ferrers-cell lattice
\(\widehat{F}_{\mathbf b}=F_{\mathbf b}\sqcup\{\bottom\}\), and give \(I\)
the induced rank function
\(\widehat\rho_I(z)=\widehat\rho(z)-\widehat\rho(x)\).  Then
\[
  \HH_{I,\widehat\rho_I}(q,t)\in\NN[q,t].
\]
\end{corollary}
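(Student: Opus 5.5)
The plan is to apply the flag expansion, \cref{thm:flag-expansion}, directly to the bounded interval \(I\), and to use the signed labelling from the proof of \cref{thm:ferrers-family} as an \(R\)-labelling of \(I\). Two cases arise.

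\emph{Degenerate case.} If \(x=y\), the weak-chain series of the singleton is \((1-t)^{-1}\). With the single rank \(0\), the numerator is therefore \(1\).

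\emph{Nondegenerate case.} Assume \(x<y\), so \(I\) is a bounded graded poset whose rank function is \(\widehat\rho_I\), of ranks \(0,\ldots,h\) with \(h=\widehat\rho(y)-\widehat\rho(x)\). Since \(y\) is the unique maximum of \(I\), the series \(\FF_{I,\widehat\rho_I}\) falls under \cref{def:qzeta-series}. \Cref{thm:flag-expansion} then gives
\[
\HH_{I,\widehat\rho_I}(q,t)=\sum_{S\subseteq[h-1]}\beta_I(S)q^{\qsum{S}}t^{|S|}.
\]
It therefore suffices to show \(\beta_I(S)\geq0\) for every \(S\).

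For this, invoke \cref{thm:r-labelling}. It requires an \(R\)-labelling of \(I\), and the natural candidate is the restriction of the EL-labelling \(\lambda\) constructed in the proof of \cref{thm:ferrers-family}. The key point is that the EL property is defined interval by interval. Every subinterval \([u,v]\subseteq I\) is also an interval of \(\widehat F_{\mathbf b}\), so it has a unique strictly increasing maximal chain, and that chain is lexicographically first. Hence the restricted labelling is again an EL-labelling, in particular an \(R\)-labelling.

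With the labelling in hand, \cref{thm:r-labelling} identifies \(\beta_I(S)\) with the number of maximal chains of \(I\) having descent set \(S\). This is a nonnegative integer, so every coefficient of \(\HH_{I,\widehat\rho_I}\) lies in \(\NN\).

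The main care point is checking that the hypotheses of the flag expansion really match the interval. First, \(\widehat\rho_I\) must be the rank function of \(I\) starting at \(0\). This holds because every cover in \(\widehat F_{\mathbf b}\) raises \(\widehat\rho\) by exactly one: the atoms \((i,i)\) have \(\widehat\rho=1\), and left and right moves each raise \(c-i\) by one. Second, the internal ranks must be indexed by \([h-1]\), as in the paper's indexing convention. Both cases, \(x=\bottom\) and \(x\) nonbottom, are covered uniformly, because the EL verification in \cref{thm:ferrers-family} treated intervals from the minimum separately. Optionally, one could also record the explicit word description, with descents at \(RL\) positions and a descent at rank \(1\) for bottom intervals, but positivity does not need it.
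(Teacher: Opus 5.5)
Your proposal is correct and follows the paper's proof essentially step for step. The singleton case gives numerator \(1\) directly. Otherwise, the restriction of the signed labelling is an EL-labelling of \(I\) because the EL conditions are intervalwise, so \cref{thm:flag-expansion} together with \cref{thm:r-labelling} writes \(\HH_{I,\widehat\rho_I}\) as a nonnegative combination of monomials \(q^{\qsum{S}}t^{|S|}\). Your extra checks on the rank normalization and the internal-rank indexing \([h-1]\) are accurate and simply make explicit what the paper leaves implicit.
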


\begin{proof}
If \(x=y\), the numerator is \(1\) directly from the weak-chain
definition.  Otherwise the interval has positive length.
Restrict the signed labelling in the proof of
\cref{thm:ferrers-family} to the interval \(I\).  The defining increasing
chain and lexicographic-first property are intervalwise, so the restriction
is an EL-labelling, hence an \(R\)-labelling, of the bounded graded poset
\(I\).  Applying the flag expansion \cref{thm:flag-expansion} to \(I\) and
evaluating its flag \(h\)-numbers by \cref{thm:r-labelling} writes
\(\HH_{I,\widehat\rho_I}\) as a sum of monomials \(q^{\qsum{S}}t^{|S|}\)
with nonnegative multiplicities.
\end{proof}

\begin{proposition}[Local path-word formula]
\label{prop:ferrers-interval-word}
Let \(x=(i,c)\leq y=(j,d)\) be nonbottom cells of \(F_{\mathbf b}\), and put
\(h=(i-j)+(d-c)\).  Let \(\mathcal W_{\mathbf b}(x,y)\) be the words with
exactly \(i-j\) letters \(L\) and \(d-c\) letters \(R\) such that, for every
prefix with counts \(L_k,R_k\),
\[
  c+R_k\leq b_{i-L_k}.
\]
For such a word set
\[
  D_{x,y}(w)=\{p\in\{1,\ldots,h-1\}:w_p=R,\ w_{p+1}=L\}.
\]
Then, with the induced rank \(\rho_{x,y}(z)=\rho(z)-\rho(x)\),
\[
  \HH_{[x,y],\rho_{x,y}}(q,t)
  =\sum_{w\in\mathcal W_{\mathbf b}(x,y)}
    q^{\sum_{p\in D_{x,y}(w)}p}t^{|D_{x,y}(w)|}.
\]
\end{proposition}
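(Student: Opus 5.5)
The plan is to apply \cref{thm:flag-expansion} and \cref{thm:r-labelling} to the bounded interval \([x,y]\), using the restriction of the signed labelling from the proof of \cref{thm:ferrers-family}. As in \cref{cor:ferrers-interval-positivity}, this restriction is an EL-labelling of \([x,y]\). The trivial case \(x=y\) (so \(h=0\)) is handled directly: the weak-chain series is \((1-t)^{-1}\), the numerator is \(1\), and the word set consists of the empty word. Assume from now on that \(x<y\).

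First I establish the interval analogue of the maximal-chain bijection. The rank function is \(c-i\) and every cover is a left move \((a,e)\to(a-1,e)\) or a right move \((a,e)\to(a,e+1)\). Hence the interval has length \(h=(i-j)+(d-c)\), with rank normalized at \(x\). A maximal chain of \([x,y]\) is a lattice path from \(x\) to \(y\) using exactly \(i-j\) left moves and \(d-c\) right moves. After a prefix with counts \(L_k,R_k\), the current element is \((i-L_k,c+R_k)\).

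Next I check which of these points lie in the interval. Every such point automatically lies between \(x\) and \(y\) in the product order. The conditions \(c+R_k\geq i-L_k\) hold since \(c\geq i\). So the only nontrivial membership condition is \(c+R_k\leq b_{i-L_k}\). Conversely, every word satisfying all these prefix inequalities traces a saturated chain inside \(F_{\mathbf b}\). This gives the bijection between maximal chains of \([x,y]\) and \(\mathcal W_{\mathbf b}(x,y)\).

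Then I compute descents, which is the step I expect to need the most care. Since \(x\neq\bottom\), no atom label \(i\) appears. Every left label \(-(a-1)\) is negative and every right label \(e+1\) is positive.
\begin{itemize}
\item Consecutive left moves have labels \(-(a-1)\) then \(-(a-2)\), which is an ascent.
\item Consecutive right moves have labels \(e+1\) then \(e+2\), which is an ascent.
\item A left move followed by a right move goes from negative to positive, which is an ascent.
\item A right move followed by a left move goes from positive to negative, which is a descent.
\end{itemize}
So the descent set, indexed by the internal rank \(p\) between the \(p\)-th and \((p+1)\)-st edges, is exactly \(D_{x,y}(w)\).

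Finally, \cref{thm:r-labelling} identifies \(\beta_{[x,y]}(S)\) with the number of words whose descent set is \(D_{x,y}(w)=S\). Substituting into \cref{thm:flag-expansion} gives the weight \(q^{\qsum{S}}t^{|S|}\) for each word, which is the stated formula. The only bookkeeping risk is matching internal rank \(p\) with the edge indexing, and this is settled by the normalization \(\rho_{x,y}(x)=0\).
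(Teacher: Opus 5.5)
Your proposal is correct and follows essentially the same route as the paper's proof. Both handle \(x=y\) directly, identify maximal chains of \([x,y]\) with the legal shuffles through the prefix positions \((i-L_k,c+R_k)\), read the descents from the sign pattern of the labels as exactly the \(RL\) positions, and then apply the bounded flag expansion together with the \(R\)-labelling descent theorem to the interval.
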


\begin{proof}[Proof of \cref{prop:ferrers-interval-word}]
For \(x=y\), both sides equal \(1\), with the empty word on the right.
For \(x<y\), maximal chains in \([x,y]\) are precisely the legal shuffles of the
\(i-j\) left moves and \(d-c\) right moves.  After a prefix they are at
\((i-L_k,c+R_k)\), so legality is exactly the displayed boundary condition.
The edge labels along consecutive left moves and consecutive right moves are
strictly increasing; a left move followed by a right move is increasing, while
a right move followed by a left move is a descent.  Thus the descent set is
\(D_{x,y}(w)\).  Apply the bounded flag expansion and the
\(R\)-labelling descent theorem to the interval.
\end{proof}

\begin{corollary}[Upper-degree slice of a nonbottom interval]
\label{cor:ferrers-interval-top-slice}
Let \(x=(i,c)\leq y=(j,d)\) be nonbottom cells and put
\(\ell=i-j\) and \(R=d-c\).  For \(0\leq u\leq \ell-1\), define
\[
  B_u=\min\{R,\,b_{i-u}-c\}.
\]
Then
\[
 [t^{\ell}]\HH_{[x,y],\rho_{x,y}}(q,t)
 =q^{\binom{\ell}{2}}
   \sum_{\substack{1\leq A_0<\cdots<A_{\ell-1}\\ A_u\leq B_u}}
   q^{A_0+\cdots+A_{\ell-1}}.
\]
For \(\ell=0\), the sum consists of the single empty sequence and equals
\(1\).  For \(\ell\geq1\), this coefficient is nonzero if and only if
\(B_u\geq u+1\) for every \(0\leq u<\ell\).  Thus the coefficient at
the upper bound \(\ell\) on descent number is a flagged strict-sequence
enumerator; it need not be the highest nonzero coefficient.
\end{corollary}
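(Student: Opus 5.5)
The plan is to start from the local path-word formula of \cref{prop:ferrers-interval-word} and isolate the words with exactly \(\ell\) local descents. A word in \(\mathcal W_{\mathbf b}(x,y)\) has \(\ell\) letters \(L\), and each local descent \(p\in D_{x,y}(w)\) marks a distinct \(L\) at position \(p+1\) that is preceded by an \(R\). Hence \(|D_{x,y}(w)|\leq\ell\). Equality holds exactly when every \(L\) is isolated and preceded by an \(R\), so the word has the form
\[
 R^{a_0}LR^{a_1}L\cdots LR^{a_\ell},\qquad a_0,\ldots,a_{\ell-1}\geq1,\ a_\ell\geq0,
\]
with \(a_0+\cdots+a_\ell=R\). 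For \(\ell=0\), the unique word \(R^{R}\) is always legal, since \(c+R_k\leq d\leq b_j=b_i\), and it contributes \(1\).

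Next we parametrize these words by \(A_u=a_0+\cdots+a_u\) for \(0\leq u\leq\ell-1\). Here \(A_u\) is the number of \(R\)'s preceding the \((u+1)\)-st \(L\), and the conditions \(a_u\geq1\) for \(u<\ell\) are equivalent to \(1\leq A_0<A_1<\cdots<A_{\ell-1}\leq R\). The \((u+1)\)-st \(L\) sits at position \(A_u+u+1\), so the corresponding descent is \(p=A_u+u\). The total weight is therefore \(\sum_u (A_u+u)=\binom{\ell}{2}+\sum_u A_u\), which accounts for the factor \(q^{\binom{\ell}{2}}\).

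Then we translate legality. Since \(b\) is weakly decreasing in the row index and rows only decrease along the path, the condition \(c+R_k\leq b_{i-L_k}\) within a block of constant \(L_k=u\) is most restrictive at the last \(R\) of that block. In block \(u\) the row is \(i-u\) and the maximal \(R_k\) is \(A_u\) for \(u<\ell\), giving \(A_u\leq b_{i-u}-c\). For the final block \(u=\ell\), the row is \(j\) and we need \(c+R=d\leq b_j\), which is automatic because \(y\) is a cell. Combined with \(A_u\leq R\), legality becomes exactly \(A_u\leq B_u\). This yields the displayed flagged strict-sequence sum. The main care needed here is to check that the initial cell \(x\) is handled, which is also automatic.

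Finally, for nonvanishing, a strict sequence \(1\leq A_0<\cdots\) forces \(A_u\geq u+1\), so \(B_u\geq u+1\) for all \(u\) is necessary. Conversely, if this holds, \(A_u=u+1\) is a valid sequence. The concluding remark that the coefficient need not be the top nonzero coefficient follows because it can vanish while lower slices survive; one example where \(B_0=0\) suffices, such as \(c=b_i\).
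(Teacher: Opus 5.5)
Your proposal is correct and follows the paper's proof essentially step for step. It uses the same run decomposition \(R^{a_0}LR^{a_1}L\cdots LR^{a_\ell}\) with \(a_0,\ldots,a_{\ell-1}\geq1\), the same cumulative parameters \(A_u\) with descent positions \(A_u+u\), the same block-maximum reduction of legality to \(A_u\leq B_u\), and the same extremal choice \(A_u=u+1\) for nonvanishing. One small inaccuracy: within a block of constant \(L_k=u\) the row \(i-u\) is fixed, so the condition is strongest at the last \(R\) simply because \(R_k\) increases, and the monotonicity of \(\mathbf b\) plays no role in that step.
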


\begin{proof}
When \(\ell=0\), the interval is a chain and the assertion is immediate.
In the local descent set \(D_{x,y}(w)\), a left move is counted exactly
when it immediately follows a right move.  A word contributing to
\(t^{\ell}\) therefore has each of its \(\ell\) left moves in a separate
run \emph{and} begins with a right move.  It thus has the form
\[
  R^{a_0}L R^{a_1}L\cdots L R^{a_{\ell-1}}L R^{a_{\ell}},
\]
where \(a_0\geq1\), \(a_u\geq1\) for \(1\leq u<\ell\), \(a_{\ell}\geq0\),
and \(a_0+\cdots+a_{\ell}=R\).  Set
\(A_u=a_0+\cdots+a_u\) for \(0\leq u\leq\ell-1\); these are strictly
increasing with \(A_0\geq1\).  The internal
right-run conditions are exactly \(A_u\leq b_{i-u}-c\), while the final
run condition is \(A_u\leq R\).  Hence the admissible run data are precisely
the displayed strict sequences.  The \((u+1)\)-st left letter is at position
\(A_u+u+1\), so the descent preceding it is at position \(A_u+u\), and the
local formula of \cref{prop:ferrers-interval-word} weights each word by
\(q\) to the sum of these positions times \(t^{\ell}\).  Summing the descent
positions gives \(\binom{\ell}{2}+\sum_uA_u\), proving the formula.
Every admissible sequence satisfies \(A_u\geq u+1\); conversely, choosing
\(A_u=u+1\) proves sufficiency of the stated nonvanishing condition.
\end{proof}

\begin{corollary}[Gaussian-binomial determinant for the local upper slice]
\label{cor:ferrers-interval-top-slice-determinant}
With the notation of \cref{cor:ferrers-interval-top-slice}, assume
\(\ell\geq1\) and use the convention \(\qbinom{N}{k}=0\) for
\(k<0\) or \(N<k\).  Then
\[
 [t^{\ell}]\HH_{[x,y],\rho_{x,y}}(q,t)
 =q^{\binom{\ell+1}{2}}
  \det_{1\leq a,b\leq\ell}
   \qbinom{B_{a-1}-a+b}{1-a+b}.
\]
At \(q=1\) this specializes to the flagged-column determinant in ordinary
binomial coefficients.  For a trapezoidal interval, writing
\(\delta=s-i-c\) gives \(B_u=\min\{R,\delta+u\}\); when
\(R\geq\delta+\ell-1\), the slice reduces to
\(q^{\ell^2}\qbinom{\delta+\ell-1}{\ell}\), and hence to
\(\binom{\delta+\ell-1}{\ell}\) at \(q=1\).
\end{corollary}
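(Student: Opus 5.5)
The plan is to start from the flagged strict-sequence formula of \cref{cor:ferrers-interval-top-slice} and recognise the sum as a flagged one-column Schur function, principally specialised, which Gessel--Viennot evaluates as a Jacobi--Trudi-type determinant. First I would shift indices: put \(E_u=A_u-1\). Then
\[
 [t^{\ell}]\HH_{[x,y],\rho_{x,y}}(q,t)
 =q^{\binom{\ell}{2}+\ell}\sum_{E}q^{E_0+\cdots+E_{\ell-1}}
 =q^{\binom{\ell+1}{2}}\sum_{E}q^{E_0+\cdots+E_{\ell-1}},
\]
where the sum is over \(0\leq E_0<\cdots<E_{\ell-1}\) with \(E_u\leq B_u-1\). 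So it remains to show that this flagged sum equals \(\det_{a,b}\qbinom{B_{a-1}-a+b}{1-a+b}\).

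Next I would check that the flag is monotone. Since \(b\) is weakly decreasing, \(b_{i-u}\) is weakly increasing in \(u\), and therefore \(B_u=\min\{R,b_{i-u}-c\}\) is weakly increasing too. A strictly increasing sequence \(E\) is exactly a semistandard filling of a single column of length \(\ell\), with entries in \(\{0,1,2,\ldots\}\) and row \(u+1\) bounded by \(B_u-1\). For weakly increasing flags, the flagged Jacobi--Trudi identity (Gessel--Viennot, or Wachs' flagged Schur functions) applies to the column shape \(1^\ell\). It expresses the generating function as \(\det\bigl(h_{1-a+b}(\text{variables }0,\ldots,B_{a-1}-1)\bigr)\). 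In the lattice-path picture, row \(a\)'s path ends at height \(B_{a-1}-1\), and the monotonicity of the flag gives the nonintersecting property needed for the LGV lemma. Principal specialisation \(x_k=q^k\) turns the complete homogeneous polynomial into
\[
h_k(1,q,\ldots,q^{N-1})=\qbinom{N+k-1}{k}.
\]
With \(N=B_{a-1}\) and \(k=1-a+b\), this gives exactly \(\qbinom{B_{a-1}-a+b}{1-a+b}\). The stated convention makes the entries vanish for \(k<0\), and also handles a bound \(B_{a-1}\leq0\). The main obstacle is this step: I must verify the boundary conventions when some \(B_u<u+1\), so that the slice is zero. The LGV argument should still hold because empty path sets give zero, but that needs care.

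At \(q=1\) the \(q\)-binomials become ordinary binomials, which gives the flagged-column determinant. For the trapezoidal case, \(b_{i-u}=s-i+u\), so \(B_u=\min\{R,\delta+u\}\). When \(R\geq\delta+\ell-1\), every \(B_u\) equals \(\delta+u\). I would then bypass the determinant and evaluate the strict-sequence sum directly: set \(C_u=A_u-u-1\), which gives weakly increasing \(0\leq C_0\leq\cdots\leq C_{\ell-1}\leq\delta-1\). This sum equals \(q^{\binom{\ell+1}{2}}\qbinom{\delta+\ell-1}{\ell}\), and multiplying by \(q^{\binom{\ell}{2}}\) gives \(q^{\ell^2}\qbinom{\delta+\ell-1}{\ell}\), which specialises to \(\binom{\delta+\ell-1}{\ell}\) at \(q=1\).
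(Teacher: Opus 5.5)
Your proposal is correct and takes essentially the same route as the paper: you shift $A_u\mapsto A_u-1$ to produce $q^{\binom{\ell+1}{2}}$, apply Wachs's flagged Jacobi--Trudi determinant for the column shape $(1^{\ell})$ with $x_j=q^{j-1}$, and evaluate the trapezoidal case directly by a box-partition substitution. The degenerate case you flag resolves as you expect, and the paper handles it the same way: since $0\le B_0\le B_1\le\cdots$, a vanishing flag forces $B_0=0$, so the sequence set is empty and the first row $\qbinom{b-1}{b}$ of the determinant is zero; for $B_0\ge1$ the convention agrees with $h_k(1,q,\ldots,q^{B-1})$ and the flagged identity applies verbatim, including when no admissible sequence exists.
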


\begin{proof}
If \(B_0=0\), the sequence set is empty and the first row of the displayed
determinant is zero, so both sides vanish.  Otherwise all flags are positive.
The preceding corollary counts strictly increasing sequences
\(1\leq A_0<\cdots<A_{\ell-1}\) with \(A_u\leq B_u\), weighted by
\(q^{A_0+\cdots+A_{\ell-1}}\).  Subtracting \(1\) from each term costs the
factor \(q^{\ell}\) and leaves the sequences
\(0\leq A'_0<\cdots<A'_{\ell-1}\) with flags \(A'_u\leq B_u-1\).  A standard
flagged-Schur principal specialization
\cite[Theorem~1.3]{wachs1985} counts the latter as
\[
 \det_{1\leq a,b\leq\ell}\qbinom{(B_{a-1}-1)+1-a+b}{1-a+b}
 =\det_{1\leq a,b\leq\ell}\qbinom{B_{a-1}-a+b}{1-a+b}.
\]
Specifically, take the column shape \((1^{\ell})\), row flags
\(B_{a-1}\), and variables \(x_j=q^{j-1}\).  The complete homogeneous
polynomial \(h_k(1,q,\ldots,q^{B-1})=\qbinom{B+k-1}{k}\)
gives exactly the displayed entries.
Combining \(q^{\ell}\) with the descent-position offset
\(q^{\binom{\ell}{2}}\) from \cref{cor:ferrers-interval-top-slice} gives the
outer factor \(q^{\binom{\ell+1}{2}}\).
For a trapezoidal boundary, \(\delta=0\) forces \(B_0=0\), and both the
slice and its asserted Gaussian specialization vanish.  For \(\delta\geq1\),
\(B_u=\min\{R,\delta+u\}\) as above; if
\(R\geq\delta+\ell-1\), the shifted sequences are exactly those with
\(A'_u\leq\delta+u-1\), and the substitution \(C_u=A'_u-u\) turns them into
partitions inside an \(\ell\times(\delta-1)\) box, with generating function
\(q^{\binom{\ell}{2}}\qbinom{\delta+\ell-1}{\ell}\).  The total
\(q\)-exponent is then
\(\binom{\ell+1}{2}+\binom{\ell}{2}=\ell^2\).
\end{proof}

\begin{corollary}[A boundary-independent \(q=0\) edge]
\label{cor:ferrers-qzero}
For every boundary \(\mathbf b\) as above,
\[
  \HH_{F_{\mathbf b},\rho}(0,t)=1+(r-1)t.
\]
For every interval \([x,y]\) with nonbottom lower endpoint,
\(\HH_{[x,y],\rho_{x,y}}(0,t)=1\).
\end{corollary}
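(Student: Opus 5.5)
Both assertions follow by setting \(q=0\) in the word formulas already proved, so no new poset input is needed. In \cref{thm:ferrers-family}, a word \(w\in\mathcal W_{\mathbf b}\) contributes \(q^{\sum_{j\in D(w)}(j-1)}t^{|D(w)|}\). Every exponent of \(q\) is a nonnegative integer, so only the words with \(\sum_{j\in D(w)}(j-1)=0\) survive at \(q=0\). Since each summand \(j-1\) is nonnegative, this forces \(D(w)\subseteq\{1\}\). Thus
\[
  \HH_{F_{\mathbf b},\rho}(0,t)=\#\{w\in\mathcal W_{\mathbf b}:D(w)=\varnothing\}
  +t\,\#\{w\in\mathcal W_{\mathbf b}:D(w)=\{1\}\}.
\]

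Next I count the two word classes. The condition \(D(w)=\varnothing\) means \(w\) contains no \(L\) at all. Such a word is \(R^{M}\), with \(\iota=1\), and it is admissible because the path \((1,1+R_k)\) stays in row \(1\) up to \(b_1\). So this class contributes exactly \(1\).

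The condition \(D(w)=\{1\}\) means \(w=L^{\ell}R^{M-\ell}\) with \(\ell\geq1\). For \(1\leq\ell\leq r-1\) the path starts at \((\ell+1,\ell+1)\), moves left to \((1,\ell+1)\), then moves right to \((1,b_1)\). Every intermediate cell \((\ell+1-L_k,\ell+1)\) satisfies \(\ell+1\leq b_{\ell+1-L_k}\), because \(\mathbf b\) is decreasing and \(b_{\ell+1}\geq\ell+1\). Hence all \(r-1\) such words are admissible. This is the same count \(\beta(\{1\})=r-1\) used in \cref{cor:ferrers-characteristic}. Together the two classes give \(1+(r-1)t\).

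For an interval \([x,y]\) with nonbottom lower endpoint, I would use \cref{prop:ferrers-interval-word}. There the weight is \(q^{\sum_{p\in D_{x,y}(w)}p}\) with every \(p\geq1\). At \(q=0\) only words with \(D_{x,y}(w)=\varnothing\) survive, that is, words with no \(RL\) factor, which must have the form \(L^{i-j}R^{d-c}\). I then check that this word is legal. During the left moves the cell is \((i-L_k,c)\), and \(c\leq b_i\leq b_{i-L_k}\) holds by monotonicity. During the right moves the cell is \((j,c+R_k)\) with \(c+R_k\leq d\leq b_j\). So there is exactly one surviving word, and the value is \(1\). The case \(x=y\) is the empty word, which also gives \(1\).

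There is no real obstacle here. The only point needing care is the legality check for the all-left-then-right words, which reduces to monotonicity of \(\mathbf b\) together with \(b_i\geq i\).
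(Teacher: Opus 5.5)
Your proposal is correct and follows essentially the same route as the paper. You set \(q=0\) in the global word formula of \cref{thm:ferrers-family}, which leaves only \(R^{b_1-1}\) and the \(r-1\) words \(L^aR^{b_1-1-a}\), and in the local formula of \cref{prop:ferrers-interval-word}, which leaves only the legal word \(L^{i-j}R^{d-c}\); your explicit legality checks (via \(b_{\ell+1}\geq\ell+1\), \(c\leq b_i\), \(d\leq b_j\), and monotonicity of \(\mathbf b\)) spell out what the paper asserts in one line.
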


\begin{proof}
In the global word formula, a monomial survives at \(q=0\) only when its
descent set is empty or is \(\{1\}\).  The unique word with empty descent set
is \(R^{b_1-1}\).  For each \(1\leq a\leq r-1\), the word
\(L^aR^{b_1-1-a}\) is admissible and has descent set \(\{1\}\), and these
are all possibilities.  This gives the first identity.  In the local interval
formula the exponent is the sum of positive internal descent positions, so
only the empty descent set remains.  Its unique word is the
all-left-then-right shuffle \(L^{i-j}R^{d-c}\), which is legal because the
boundary is
nonincreasing.  Hence the local numerator is \(1\).
\end{proof}

\begin{corollary}[Shape monotonicity]
\label{cor:ferrers-shape-monotonicity}
Fix $r$ and $b_1$.  Let $\mathbf b=(b_1,\ldots,b_r)$ and
$\mathbf b'=(b_1',\ldots,b_r')$ be two nonincreasing boundary sequences
with $b_1'=b_1$ and $b_i\leq b_i'$ for every $i$.  Then
\[
  \HH_{F_{\mathbf b},\rho}(q,t)
  \preceq
  \HH_{F_{\mathbf b'},\rho}(q,t),
\]
where $A\preceq B$ means that $B-A\in\NN[q,t]$.  More generally, if
$x=(i,c)\leq y=(j,d)$ are cells common to both shapes, then
\[
  \HH_{[x,y]_{\mathbf b},\rho_{x,y}}(q,t)
  \preceq
  \HH_{[x,y]_{\mathbf b'},\rho_{x,y}}(q,t).
\]
In particular, for fixed $r$ and $b_1\geq r$, the boundary sequences
$(b_1,r,\ldots,r)$ and $(b_1,\ldots,b_1)$ give coefficientwise lower and
upper bounds for every boundary with that top width.
\end{corollary}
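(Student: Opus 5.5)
The plan is to compare the word models directly, without touching the lattices themselves. By \cref{thm:ferrers-family}, \(\HH_{F_{\mathbf b},\rho}\) is the sum over \(\mathcal W_{\mathbf b}\) of the monomials \(q^{\sum_{j\in D(w)}(j-1)}t^{|D(w)|}\). The weight of a word depends only on \(w\) itself, through its run-start set \(D(w)\), and not on the boundary. Since \(b_1'=b_1\), both word sets live in the same ambient set \(\{L,R\}^{M}\) with \(M=b_1-1\). So it suffices to prove the inclusion \(\mathcal W_{\mathbf b}\subseteq\mathcal W_{\mathbf b'}\). The difference \(\HH_{F_{\mathbf b'},\rho}-\HH_{F_{\mathbf b},\rho}\) is then the sum of the monomials of the words in \(\mathcal W_{\mathbf b'}\setminus\mathcal W_{\mathbf b}\), and therefore lies in \(\NN[q,t]\).

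For the inclusion, take \(w\in\mathcal W_{\mathbf b}\). The condition \(\ell(w)\leq r-1\) does not involve the boundary, and \(\iota(w)\) depends only on \(w\). For every prefix we have
\[
\iota(w)+R_k\leq b_{\iota(w)-L_k}\leq b'_{\iota(w)-L_k},
\]
so \(w\in\mathcal W_{\mathbf b'}\). The only point to check is that the index \(\iota(w)-L_k\) lies in \([1,r]\). This holds because \(0\leq L_k\leq\ell(w)\leq r-1\), and this range is the same for both shapes. The interval statement is handled in the same way, using \cref{prop:ferrers-interval-word}. For common cells \(x=(i,c)\leq y=(j,d)\), both numerators are sums over words with \(i-j\) letters \(L\) and \(d-c\) letters \(R\), weighted by \(q^{\sum_{p\in D_{x,y}(w)}p}t^{|D_{x,y}(w)|}\). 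This weight again depends only on \(w\). The legality condition \(c+R_k\leq b_{i-L_k}\) is monotone in the boundary, so \(\mathcal W_{\mathbf b}(x,y)\subseteq\mathcal W_{\mathbf b'}(x,y)\). We also need \(x\leq y\) to hold in both shapes. This is automatic, since the order is the coordinatewise order restricted to cells. Finally, the case \(x=y\) gives \(1\preceq1\).

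For the final clause, take any nonincreasing \(\mathbf b\) with first entry \(b_1\) and \(b_i\geq i\). Then \(b_i\leq b_1\) by monotonicity, so \((b_1,\ldots,b_1)\) dominates \(\mathbf b\) entrywise. For the lower bound, the constraint \(b_r\geq r\) together with monotonicity gives \(b_i\geq b_r\geq r\) for all \(i\). Hence \(\mathbf b\) dominates \((b_1,r,\ldots,r)\). Both extreme sequences are admissible boundaries: they are nonincreasing because \(b_1\geq r\), and they satisfy \(b_i\geq i\) because \(r\geq i\). Applying the first part twice then gives both bounds.

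I do not expect a genuine obstacle here. The one point that needs care is confirming that the word weight really is boundary-independent. This rests on two facts: \(D(w)\) is defined purely from the letters of \(w\), and the common length \(M=b_1-1\) fixes the position indexing, which is exactly why the hypothesis \(b_1'=b_1\) is needed. A second point to check is the index range \(\iota(w)-L_k\in[1,r]\), which keeps the comparison \(b\leq b'\) within the given entries.
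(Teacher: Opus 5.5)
Your proposal is correct and follows the paper's proof. Both arguments reduce the comparison to the inclusions $\mathcal W_{\mathbf b}\subseteq\mathcal W_{\mathbf b'}$ and $\mathcal W_{\mathbf b}(x,y)\subseteq\mathcal W_{\mathbf b'}(x,y)$ of prefix-legal words of a common length, whose weights do not depend on the boundary, via \cref{thm:ferrers-family} and \cref{prop:ferrers-interval-word}. Your additional checks, the index range $1\leq\iota(w)-L_k\leq r$ and the entrywise domination $(b_1,r,\ldots,r)\leq\mathbf b\leq(b_1,\ldots,b_1)$ for the final clause, supply details that the paper leaves implicit.
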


\begin{proof}
The two global word sets have the same word length $b_1-1$.  Every
$\mathbf b$-admissible word satisfies the defining prefix inequalities for
$\mathbf b'$, so $\mathcal W_{\mathbf b}\subseteq\mathcal W_{\mathbf b'}$.
The weights attached to a word are identical, and the first inequality
follows term by term from \cref{thm:ferrers-family}.  The local word sets
have the same prescribed endpoints; the same inclusion of prefix-legal
words and the local formula of \cref{prop:ferrers-interval-word} give the
second inequality.
\end{proof}

\begin{corollary}[A universal Gaussian-binomial envelope]
\label{cor:ferrers-envelope}
Put \(M=b_1-1\).  For \(1\leq k\leq r-1\), one has
\[
 [t^k]\HH_{F_{\mathbf b},\rho}(q,t)
 \preceq
 q^{k(k-1)}
 \sum_{h=k}^{\min\{r-1,M-k+1\}}
 \qbinom{h-1}{k-1}\qbinom{M-h+1}{k}.
\]
The right-hand side is the exact \(t^k\)-coefficient for the rectangular
boundary \((b_1,\ldots,b_1)\).  Thus every Ferrers shape with fixed top
width has an explicit coefficientwise \(q\)-binomial upper envelope.
\end{corollary}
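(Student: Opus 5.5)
Combining \cref{cor:ferrers-shape-monotonicity} with an explicit count for the rectangular boundary \(\bar{\mathbf b}=(b_1,\ldots,b_1)\) gives the result.  Because \(b_i\leq b_1\) for every \(i\) and the top entries agree, shape monotonicity yields \(\HH_{F_{\mathbf b},\rho}\preceq\HH_{F_{\bar{\mathbf b}},\rho}\) coefficientwise, and hence the same inequality for each \(t^k\)-slice.  It therefore suffices to show that the right-hand side equals \([t^k]\HH_{F_{\bar{\mathbf b}},\rho}(q,t)\).

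For the rectangular boundary, the prefix condition \(\iota(w)+R_k\leq b_1\) defining \(\mathcal W_{\bar{\mathbf b}}\) is automatic.  Indeed \(R_k\leq M-\ell(w)=b_1-1-\ell(w)\), so \(\iota(w)+R_k\leq b_1\).  Thus \(\mathcal W_{\bar{\mathbf b}}\) consists of all words of length \(M\) with at most \(r-1\) letters \(L\).  By \cref{thm:ferrers-family}, the \(t^k\)-slice is \(\sum_w q^{\sum_{j\in D(w)}(j-1)}\) over such words with exactly \(k\) \(L\)-runs.  I would organize the count by \(h=\ell(w)\), the total number of \(L\)'s, which satisfies \(k\leq h\leq r-1\).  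Since there must also be at least \(k-1\) separating \(R\)'s, we need \(h\leq M-k+1\).  This explains the range of summation.

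The main step is the weighted count for fixed \(h\) and \(k\).  The \(L\)-run lengths form a composition of \(h\) into \(k\) positive parts.  Let \(p_a\) be the number of \(R\)'s preceding the \(a\)-th run.  Then \(0\leq p_1<p_2<\cdots<p_k\leq M-h\), and the run start is \(j_a=p_a+(\text{number of }L\text{'s in runs }1,\ldots,a-1)+1\).  The pair (composition, strict sequence) determines the word.  One has to check that the weight factors into a Gaussian binomial for each part, which is not obvious because the run lengths enter \(j_a\).

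To see the factorization, write \(j_a-1=p_a+(c_1+\cdots+c_{a-1})\), where \(c_i\) are the run lengths.  Set \(c_i=1+e_i\) with \(e_i\geq0\).  The contribution of the \(c\)'s splits into a fixed part \(\sum_a(a-1)=\binom{k}{2}\) and a variable part \(\sum_i (k-i)e_i\).  The variable part is the size of the partition whose conjugate has columns given by the partial sums of \(e\).  Such partitions have at most \(h-k\) in size-bound and at most \(k-1\) parts, so they generate \(\qbinom{h-1}{k-1}\); this is the standard composition-to-box bijection.  The strict sequence \(p\) in \(\{0,\ldots,M-h\}\) generates \(q^{\binom{k}{2}}\qbinom{M-h+1}{k}\).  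Multiplying gives \(q^{k(k-1)}\qbinom{h-1}{k-1}\qbinom{M-h+1}{k}\), and summing over \(h\) gives the stated formula.  The term-by-term bijection needs care, but it is routine.

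As a check, at \(r-1\geq M\) and \(q=1\) Vandermonde's identity returns \(\binom{M+1}{2k}\), which agrees with \cref{cor:type-A}.
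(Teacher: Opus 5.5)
Your proposal is correct and follows the paper's proof. Shape monotonicity reduces the claim to the rectangular boundary, where admissibility is automatic. For a fixed number \(h\) of \(L\)'s, the run decomposition splits the weight into a left-run-length part, which gives \(\qbinom{h-1}{k-1}\), and a right-gap part, which gives \(\qbinom{M-h+1}{k}\); the two \(\binom{k}{2}\) offsets combine to \(q^{k(k-1)}\). The only cosmetic difference is that you encode the right gaps by the cumulative counts \(p_a\), a strict sequence, where the paper uses the gap lengths \(a_i\); this is the same computation.
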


\begin{proof}
By \cref{cor:ferrers-shape-monotonicity}, it suffices to evaluate the
rectangular boundary \(\bar{\mathbf b}=(b_1,\ldots,b_1)\).  Its admissibility
inequalities are automatic: after a prefix, the column coordinate is at most
\(1+\#L+\#R\leq M+1=b_1\).  A word with \(h\) letters \(L\) and \(k\)
\(L\)-runs has the decomposition
\[
  R^{a_0}L^{h_1}R^{a_1}\cdots L^{h_k}R^{a_k},
\]
where \(h_i\geq1\), \(a_0,a_k\geq0\), and \(a_i\geq1\) internally.
The sum of the run-start positions minus one is
\[
  k a_0+\sum_{i=1}^{k-1}(k-i)(h_i+a_i).
\]
Removing the mandatory unit from each \(h_i\) and each internal \(a_i\)
contributes \(k(k-1)\).  The remaining left-run lengths, with total
\(h-k\), give \(\qbinom{h-1}{k-1}\); the remaining right gaps, with total
\(M-h-k+1\), give \(\qbinom{M-h+1}{k}\).  Hence summing the run-start
weight over the compositions gives
\[
 q^{k(k-1)}
 \qbinom{h-1}{k-1}\qbinom{M-h+1}{k}.
\]
Here \(h\geq k\), \(h\leq r-1\), and \(M-h\geq k-1\), which are exactly the
displayed summation bounds.  Summing over \(h\) proves the claim, with an
empty sum interpreted as zero.
\end{proof}

\begin{corollary}[The first $t$-slice for an arbitrary boundary]
\label{cor:ferrers-first-slice}
For the boundary sequence $\mathbf b$ above, write
\([m]_q=1+q+\cdots+q^{m-1}\) for $m\geq1$, and set \([0]_q=0\).  Then
\[
  [t]\HH_{F_{\mathbf b},\rho}(q,t)
  =\sum_{i=2}^{r}[\,b_i-i+1\,]_q.
\]
In particular, for the trapezoidal boundary $b_i=s-i$ this becomes
\[
  [t]\HH_{T_{r,s},\rho}(q,t)
  =\sum_{i=2}^{r}[\,s-2i+1\,]_q.
\]
\end{corollary}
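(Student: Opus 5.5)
The plan is to read off the \(t^1\)-coefficient directly from the word formula of \cref{thm:ferrers-family}. A word \(w\in\mathcal W_{\mathbf b}\) contributes to \(t^1\) exactly when \(|D(w)|=1\), that is, when \(w\) has exactly one \(L\)-run. Such a word has the form
\[
  w=R^{a}L^{h}R^{M-a-h},\qquad h\geq1,\ a\geq0,\ a+h\leq M,
\]
and its unique run start is \(j=a+1\). Its weight is therefore \(q^{a}t\). So I will parametrize these words by the pair \((h,a)\) and sum \(q^a\).

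Next I will translate admissibility into explicit ranges for \(h\) and \(a\). We have \(\ell(w)=h\), so \(\iota(w)=h+1\), and the condition \(\ell(w)\leq r-1\) becomes \(2\leq i:=h+1\leq r\). I then check the prefix inequalities \(\iota+R_k\leq b_{\iota-L_k}\) in three phases.

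\begin{enumerate}
\item During the initial \(R\)-run we have \(L_k=0\), and the inequality reads \(i+R_k\leq b_i\). At its strongest this is \(a\leq b_i-i\).
\item During the \(L\)-run, \(R_k=a\) is fixed and the index \(i-L_k\) decreases. Since \(\mathbf b\) is nonincreasing, \(b_{i-L_k}\geq b_i\), so these inequalities follow from phase 1.
\item During the final \(R\)-run the index is \(1\), and the inequality \(i+R_k\leq b_1\) holds automatically because the total length is \(M=b_1-1\).
\end{enumerate}

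The length constraint \(a+h\leq M\) also follows, since \(a+h\leq b_i-i+i-1=b_i-1\leq b_1-1\). Thus, for each \(i\in\{2,\ldots,r\}\), the admissible values are exactly \(0\leq a\leq b_i-i\). This range is nonempty because \(b_i\geq i\).

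Summing \(q^{a}\) over this range gives \([b_i-i+1]_q\), and summing over \(i\) yields the stated formula. The trapezoidal case follows by substituting \(b_i=s-i\), which turns each summand into \([s-2i+1]_q\).

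The only step needing care is the second one: confirming that the \(L\)-run and final-run inequalities are implied by the initial-run bound. This rests on the monotonicity of \(\mathbf b\) and on \(M=b_1-1\). Once that is settled, the rest is a geometric sum.
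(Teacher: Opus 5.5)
Your proposal is correct and takes essentially the same route as the paper. Both parametrize the one-run words \(R^aL^hR^{M-a-h}\) with starting row \(i=h+1\), reduce admissibility to \(0\leq a\leq b_i-i\) using monotonicity of \(\mathbf b\) and \(M=b_1-1\), and sum \(q^a\) to get \([b_i-i+1]_q\). Your three-phase check of the prefix inequalities, together with the explicit verification that \(a+h\leq M\), is slightly more careful than the paper's wording.
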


\begin{proof}
A word with exactly one left run has the form
\(w=R^aL^hR^{b_1-1-a-h}\), where $1\leq h\leq r-1$ and
$a\geq0$.  Its initial cell is $(h+1,h+1)$.  The prefix inequalities before
the left run are strongest at its end and read
\(h+1+a\leq b_{h+1}\); after the left run, monotonicity of the boundary
makes the inequalities automatic until the final row, where
\(c\leq b_1\) is automatic.  Thus, for $i=h+1$, the admissible values are
\(0\leq a\leq b_i-i\).  The unique run start is at position $a+1$, so its
weight is $q^a$.  Summing over $i=2,\ldots,r$ gives the first identity; the
trapezoidal specialization follows from $b_i=s-i$.
\end{proof}

\begin{corollary}[Rigidity of the rectangular envelope]
\label{cor:ferrers-envelope-rigidity}
Let \(\bar{\mathbf b}=(b_1,\ldots,b_1)\) be the rectangular boundary with
the same top width as \(\mathbf b\).  Then
\[
 [t]\HH_{F_{\bar{\mathbf b}},\rho}(q,t)-[t]\HH_{F_{\mathbf b},\rho}(q,t)
 =\sum_{i=2}^{r}q^{b_i-i+1}[\,b_1-b_i\,]_q.
\]
Consequently, equality in the coefficientwise Gaussian-binomial envelope
for the whole numerator can occur only for the rectangular boundary; in fact,
equality of the first \(t\)-slice already forces \(b_i=b_1\) for every
\(i\geq2\).
\end{corollary}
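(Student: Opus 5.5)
The plan is to apply \cref{cor:ferrers-first-slice} twice, once to \(\mathbf b\) and once to the rectangular boundary \(\bar{\mathbf b}\), and subtract row by row. For \(\bar{\mathbf b}\), every row has \(\bar b_i=b_1\), so the first slice is \(\sum_{i=2}^{r}[b_1-i+1]_q\). For \(\mathbf b\), it is \(\sum_{i=2}^{r}[b_i-i+1]_q\). Both are sums over the same row index \(i\), so the difference is a sum of termwise differences \([b_1-i+1]_q-[b_i-i+1]_q\).

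Each termwise difference is computed by the elementary identity
\[
[A+B]_q-[A]_q=q^{A}[B]_q\qquad (A,B\ge 0),
\]
taken with \(A=b_i-i+1\geq1\) (since \(b_i\geq i\)) and \(B=b_1-b_i\geq0\) (since the boundary is nonincreasing). This identity holds because \([A+B]_q\) is the sum \(1+q+\cdots+q^{A+B-1}\), and removing the first \(A\) terms leaves \(q^A(1+\cdots+q^{B-1})\). The convention \([0]_q=0\) covers \(B=0\). Summing over \(i\) gives the displayed identity.

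For the rigidity consequence, note that each summand \(q^{b_i-i+1}[b_1-b_i]_q\) lies in \(\NN[q]\) and is nonzero exactly when \(b_i<b_1\). A sum of elements of \(\NN[q]\) vanishes only if every term vanishes, so equality of the first \(t\)-slices forces \(b_i=b_1\) for all \(i\geq2\). Now suppose the whole numerator of \(F_{\mathbf b}\) equals the envelope. By \cref{cor:ferrers-envelope}, the envelope is the numerator of \(F_{\bar{\mathbf b}}\), so in particular the \(t^1\)-coefficients agree, and \(\mathbf b\) must be rectangular.

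No step is a real obstacle. The only points to watch are the sign and nonnegativity conventions for \([0]_q\), and the edge case \(r=1\). For \(r=1\) both sums are empty and the boundary is trivially rectangular, so the claim holds vacuously.
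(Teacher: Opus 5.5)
Your proposal is correct and follows essentially the same route as the paper: apply \cref{cor:ferrers-first-slice} to both boundaries, subtract row by row using \([u]_q-[v]_q=q^{v}[u-v]_q\), and deduce rigidity from the nonnegativity of each summand together with \cref{cor:ferrers-envelope}. Your explicit treatment of the \(r=1\) case and of the convention \([0]_q=0\) is a small but welcome addition that the paper leaves implicit.
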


\begin{proof}
The first-slice formula gives the difference as
\(\sum_{i=2}^{r}([b_1-i+1]_q-[b_i-i+1]_q)\).  The elementary identity
\([u]_q-[v]_q=q^v[u-v]_q\) for \(u\geq v\) yields the displayed sum.
Each summand is zero exactly when \(b_i=b_1\), so equality of the first
slice is equivalent to rectangularity.  The final assertion follows from
the envelope corollary.
\end{proof}

\begin{corollary}[The upper-degree $t$-slice for an arbitrary boundary]
\label{cor:ferrers-top-slice}
Put $m=r-1$ and, for $0\leq j\leq m-1$, set
\(B_j=b_{r-j}-r\).  Then
\[
 [t^{r-1}]\HH_{F_{\mathbf b},\rho}(q,t)
 =q^{\binom{m}{2}}
 \sum_{\substack{0\leq A_0<\cdots<A_{m-1}\\ A_j\leq B_j}}
 q^{A_0+\cdots+A_{m-1}}.
\]
For $r=1$ the displayed sum consists of the single empty sequence and
equals $1$.
For \(r\geq2\), the displayed coefficient is nonzero if and only if
\(B_j\geq j\) for all \(0\leq j<m\); otherwise the degree in \(t\)
is strictly less than \(r-1\).
For the trapezoidal boundary $b_i=s-i$, writing
\(\delta=s-2r\geq0\) and
\(\qbinom{n}{k}=\prod_{u=1}^{k}(1-q^{n-k+u})/(1-q^u)\), this specializes to
\[
 [t^{r-1}]\HH_{T_{r,s},\rho}(q,t)
 =q^{(r-1)(r-2)}\qbinom{s-r-1}{r-1}.
\]
\end{corollary}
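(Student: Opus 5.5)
The plan is to mirror the proof of \cref{cor:ferrers-interval-top-slice}, but with the global word formula of \cref{thm:ferrers-family}, where a run start at position \(1\) is also counted. Since \(|D(w)|\leq\ell(w)\leq r-1\), a word contributes to \(t^{r-1}\) exactly when \(\ell(w)=r-1\) and every \(L\) forms its own run. Then \(\iota(w)=r\), so the path starts at the atom \((r,r)\), and \(w\) has the shape
\[
  R^{a_0}LR^{a_1}L\cdots LR^{a_{m}},
\]
with \(a_0\geq0\) (an initial \(L\) is still a run start), \(a_u\geq1\) for \(1\leq u\leq m-1\), \(a_m\geq0\), and \(a_0+\cdots+a_m=b_1-r\). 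The same argument shows that no word has more than \(r-1\) run starts. So if this coefficient vanishes, the \(t\)-degree drops below \(r-1\).

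Next I translate admissibility. Put \(A_u=a_0+\cdots+a_u\) for \(0\leq u\leq m-1\); the conditions on the \(a_u\) say exactly that \(0\leq A_0<A_1<\cdots<A_{m-1}\). While the path sits in row \(r-u\), its column is at most \(r+A_u\), and it reaches this value just before the \((u+1)\)-st left move. The prefix inequality \(\iota+R_k\leq b_{\iota-L_k}\) within that row is therefore strongest there, and it reads \(A_u\leq b_{r-u}-r=B_u\). In the final row the condition is \(c\leq b_1\), which is automatic. The last run \(a_m=b_1-r-A_{m-1}\) is nonnegative because \(A_{m-1}\leq B_{m-1}=b_2-r\leq b_1-r\). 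This gives a bijection between contributing words and the displayed flagged strict sequences.

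For the weight, the \((u+1)\)-st \(L\) sits at position \(A_u+u+1\), so its contribution \(j-1\) is \(A_u+u\). Summing over \(u\) gives \(\binom{m}{2}+\sum_u A_u\), which proves the main formula. For \(r=1\) the only word is \(R^{b_1-1}\), matching the empty sequence. Every admissible sequence has \(A_j\geq j\), and the choice \(A_j=j\) is admissible exactly when \(B_j\geq j\) for all \(j\); this is the nonvanishing criterion.

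For the trapezoidal boundary, \(B_j=s-(r-j)-r=\delta+j\). The substitution \(C_j=A_j-j\) turns the strict flagged sequences into weakly increasing sequences \(0\leq C_0\leq\cdots\leq C_{m-1}\leq\delta\), at a cost of \(q^{\binom m2}\). These sequences are partitions in an \(m\times\delta\) box, so their generating function is \(\qbinom{\delta+m}{m}=\qbinom{s-r-1}{r-1}\). The total prefactor is \(q^{2\binom m2}=q^{(r-1)(r-2)}\).

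The only delicate step is the admissibility translation: I must check that within each row the prefix inequality is tightest at the end of the right run, and that the top-row and final-run constraints are implied by monotonicity of \(\mathbf b\). Both follow directly from \(b_1\geq b_2\geq\cdots\geq b_r\).
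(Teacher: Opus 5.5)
Your proposal is correct and takes essentially the same route as the paper. The common steps are the run decomposition $R^{a_0}LR^{a_1}\cdots LR^{a_m}$ with cumulative sums $A_u$, the observation that each row's prefix inequality is tightest at the end of its right run, the weight $\binom{m}{2}+\sum_u A_u$, and the shift $C_j=A_j-j$ to partitions in an $m\times\delta$ box for the trapezoid. Your explicit checks that the last run is nonnegative via $B_{m-1}=b_2-r\leq b_1-r$, and that an initial $L$ still counts as a run start, fill in details the paper's proof states only briefly.
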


\begin{proof}
The case \(r=1\) is immediate, so assume \(m=r-1\geq1\).
Having $r-1$ left runs forces exactly $r-1$ left letters, each in its own
run.  Write a word as
\(R^{a_0}LR^{a_1}L\cdots LR^{a_{m-1}}LR^{a_m}\), with
$a_0,a_m\geq0$ and $a_j\geq1$ for $1\leq j<m$.  Set
\(A_j=a_0+\cdots+a_j\).  The prefix condition at the end of the $j$-th
right run is precisely $A_j\leq b_{r-j}-r=B_j$, and the intermediate left
moves are then legal by monotonicity.  The $A_j$ are strictly increasing, and
the final right run is determined and nonnegative by the endpoint width.
The run starts contribute
\(\sum_{j=0}^{m-1}(A_j+j)=\binom{m}{2}+\sum_jA_j\), proving the first
identity.  The least strictly increasing nonnegative sequence is
\(A_j=j\), which proves the nonvanishing criterion.
In the trapezoidal case $B_j=\delta+j$.  Writing
\(C_j=A_j-j\) gives weakly increasing integers
\(0\leq C_0\leq\cdots\leq C_{m-1}\leq\delta\); their generating function is
the Gaussian binomial \(\qbinom{\delta+m}{m}\).  The outer factor contributes
the second copy of $\binom{m}{2}$ in the \(q\)-exponent.
\end{proof}

\begin{example}[A nonlinear boundary]
\label{ex:nonlinear-ferrers}
For \(\mathbf b=(5,4,4)\), the boundary is not linear. The admissible words
are
\[
  RRRR,\ LRRR,\ RLRR,\ RRLR,\ LLRR,\ LRLR,\ RLLR,
\]
and the theorem gives
\[
  \HH_{F_{\mathbf b},\rho}(q,t)
  =1+(2+2q+q^2)t+q^2t^2.
\]
This example lies outside the linear trapezoidal subfamily and illustrates
that the boundary inequalities retain genuinely shape-dependent information.
\end{example}

\section{Adjoining a minimum}
\label{sec:augmentation}

The next argument is valid for any finite graded poset with a unique
maximum.

\begin{lemma}
\label{lem:minimum-shift}
Let \(P\) have ranks \(0,\ldots,H\) and a unique maximum.  Adjoin a new
minimum \(\bottom\) and denote the resulting bounded poset by \(\wideP\).
Give it the shifted rank
\[
  \widehat{\rho}(\bottom)=0,
  \qquad
  \widehat{\rho}(x)=\rho(x)+1\quad(x\in P).
\]
Then
\begin{align}
  \FF_{\wideP,\widehat{\rho}}(q,t)
    &=\frac{\FF_{P,\rho}(q,qt)}{1-t},
    \label{eq:series-shift}\\
  \HH_{\wideP,\widehat{\rho}}(q,t)
    &=\HH_{P,\rho}(q,qt).
    \label{eq:numerator-shift}
\end{align}
\end{lemma}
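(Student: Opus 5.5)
The plan is to compute the weak-chain coefficients \(A_m(\wideP,\widehat\rho;q)\) directly by splitting each weak chain according to how many of its entries equal \(\bottom\). The series identity \cref{eq:series-shift} will follow from this decomposition. The numerator identity \cref{eq:numerator-shift} will then come from comparing denominators in \cref{eq:numerator-definition}.

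First, a weak chain \(x_1\leq\cdots\leq x_m\) in \(\wideP\) consists of some number \(j\geq0\) of copies of \(\bottom\) followed by a weak chain \(y_1\leq\cdots\leq y_{m-j}\) in \(P\). This decomposition is bijective because \(\bottom\) lies below everything. Each \(\bottom\) contributes \(q^0\), and each \(y_k\) contributes \(q^{\rho(y_k)+1}\), so
\[
  A_m(\wideP,\widehat\rho;q)=\sum_{j=0}^{m}q^{m-j}A_{m-j}(P,\rho;q).
\]
Multiplying by \(t^m\) and summing over \(m\), I get the Cauchy product of \(\sum_{j\geq0}t^j=(1-t)^{-1}\) with \(\sum_{n\geq0}A_n(P,\rho;q)(qt)^n=\FF_{P,\rho}(q,qt)\). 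This proves \cref{eq:series-shift} as an identity of formal power series in \(t\).

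Second, I record the ranks of \(\wideP\). It has ranks \(0,\ldots,H+1\) and still has a unique maximum, namely the old one. Its denominator in \cref{eq:numerator-definition} is therefore \(\prod_{a=0}^{H+1}(1-q^at)\). On the other side, substituting \(t\mapsto qt\) in the definition for \(P\) gives
\[
  \FF_{P,\rho}(q,qt)=\frac{\HH_{P,\rho}(q,qt)}{\prod_{a=0}^{H}(1-q^{a+1}t)}
  =\frac{\HH_{P,\rho}(q,qt)}{\prod_{a=1}^{H+1}(1-q^{a}t)}.
\]
Dividing by \(1-t\) supplies the missing \(a=0\) factor. Hence \(\FF_{\wideP,\widehat\rho}\) has the form \(\HH_{P,\rho}(q,qt)\big/\prod_{a=0}^{H+1}(1-q^at)\), and the numerator is \(\HH_{P,\rho}(q,qt)\), a polynomial.

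The only point needing care is that the numerator is uniquely determined by \cref{eq:numerator-definition}. This holds because the denominator is a unit in \(\mathbb{Z}[q][[t]]\), so multiplying by it recovers \(\HH\) uniquely. I also need to confirm that the indexing conventions match Chapoton's normalization, including the \(m=0\) term equal to \(1\); both sides have constant term \(1\), so they agree. I do not expect a genuine obstacle. The main thing to check is the bookkeeping of the index shift \(a\mapsto a+1\) in the denominator.
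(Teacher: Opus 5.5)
Your proposal is correct and follows the paper's proof essentially step for step. Both arguments split a weak chain in \(\wideP\) into an initial block of copies of \(\bottom\) followed by a weak chain in \(P\), which gives \(A_m(\wideP,\widehat\rho;q)=\sum_{\ell=0}^{m}q^{\ell}A_\ell(P,\rho;q)\), and both then compare numerators over the fixed denominator \(\prod_{a=0}^{H+1}(1-q^at)\). Your observation that this denominator is a unit in \(\mathbb{Z}[q][[t]]\) is the same point the paper makes when it says no reduced-fraction assumption is needed.
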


\begin{proof}
A weak chain of total length \(m\) in \(\wideP\) has a unique initial block
of \(m-\ell\) copies of \(\bottom\), followed by a weak chain of length
\(\ell\) in \(P\).  Shifting every old rank by one multiplies the weight of
that suffix by \(q^\ell\).  Therefore
\[
  A_m(\wideP,\widehat{\rho};q)
  =
  \sum_{\ell=0}^{m}q^\ell A_\ell(P,\rho;q).
\]
Summing in \(m\) gives \cref{eq:series-shift}.

Using \cref{eq:numerator-definition}, the denominator on the right-hand side
of \cref{eq:series-shift} is
\[
  (1-t)\prod_{a=0}^{H}(1-q^a(qt))
  =
  \prod_{j=0}^{H+1}(1-q^jt),
\]
which is exactly the prescribed denominator for \(\widehat{\rho}\).
Both sides have now been written over the same fixed denominator, so
comparison of numerators proves \cref{eq:numerator-shift}; no reduced-fraction
assumption is required.
\end{proof}

\begin{proposition}
\label{prop:shifted-flag-expansion}
With the hypotheses of \cref{lem:minimum-shift},
\begin{equation}
\label{eq:shifted-flag-expansion}
  \HH_{P,\rho}(q,t)
  =
  \sum_{S\subseteq[H]}
  \beta_{\wideP}(S)
  q^{\qsum{S}-|S|}t^{|S|}.
\end{equation}
In particular, every exponent of \(q\) on the right is nonnegative.
\end{proposition}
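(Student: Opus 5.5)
The plan is to combine \cref{lem:minimum-shift} with Chapoton's flag expansion \cref{thm:flag-expansion}, applied to the bounded poset \(\wideP\), and then undo the substitution \(t\mapsto qt\).

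First, \(\wideP\) is bounded: it has the adjoined minimum \(\bottom\) and the unique maximum of \(P\). It is graded by \(\widehat\rho\) with ranks \(0,\ldots,H+1\), so its internal ranks are exactly \([H]\). Applying \cref{thm:flag-expansion} to \(\wideP\) then gives
\[
  \HH_{\wideP,\widehat\rho}(q,t)=\sum_{S\subseteq[H]}\beta_{\wideP}(S)\,q^{\qsum{S}}t^{|S|}.
\]
By \cref{eq:numerator-shift}, the left-hand side equals \(\HH_{P,\rho}(q,qt)\). We therefore have a polynomial identity
\[
  \HH_{P,\rho}(q,qt)=\sum_{S\subseteq[H]}\beta_{\wideP}(S)\,q^{\qsum{S}}t^{|S|}.
\]

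The only delicate step is inverting the substitution \(t\mapsto qt\). I would do this by working in \(\mathbb{Z}[q,q^{-1},t]\), where the map \(t\mapsto qt\) is a ring automorphism with inverse \(t\mapsto q^{-1}t\). Applying the inverse to both sides gives
\[
  \HH_{P,\rho}(q,t)=\sum_{S}\beta_{\wideP}(S)\,q^{\qsum{S}-|S|}t^{|S|}.
\]
This is an identity of Laurent polynomials in \(q\). Since the left side is a genuine polynomial, the identity holds in \(\mathbb{Z}[q,t]\) once we know every exponent on the right is nonnegative. Coefficient by coefficient, this amounts to comparing the \(t^k\)-coefficients of both sides, which are related by the factor \(q^k\).

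Nonnegativity of the exponents follows because \(S\subseteq[H]\) consists of distinct positive integers. Hence \(\qsum{S}\geq 1+2+\cdots+|S|\geq|S|\), so \(\qsum{S}-|S|\geq\binom{|S|}{2}\geq0\). This proves the final assertion.
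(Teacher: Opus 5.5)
Your proposal is correct and follows the paper's proof. The paper also applies the flag expansion to \(\wideP\), uses \cref{eq:numerator-shift}, replaces \(t\) by \(t/q\), and gets nonnegativity from \(s\geq1\) for every \(s\in S\). Your justification of the inversion through the automorphism \(t\mapsto qt\) of \(\mathbb{Z}[q,q^{-1},t]\), and your sharper bound \(\qsum{S}-|S|\geq\binom{|S|}{2}\), are harmless additions.
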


\begin{proof}
Apply \cref{thm:flag-expansion} to \(\wideP\), then use
\cref{eq:numerator-shift} and replace \(t\) by \(t/q\).  Since every
\(s\in S\) satisfies \(s\geq 1\),
\[
  \qsum{S}-|S|=\sum_{s\in S}(s-1)\geq 0.
\]
\end{proof}

\begin{corollary}[\(R\)-labelling positivity criterion]
\label{cor:r-label-positivity}
If the minimum augmentation \(\widehat P\) of a finite graded poset \(P\) with
unique maximum admits an \(R\)-labelling, then
\[
  \HH_{P,\rho}(q,t)\in\NN[q,t].
\]
\end{corollary}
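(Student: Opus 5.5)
The plan is to combine the shifted flag expansion of \cref{prop:shifted-flag-expansion} with the descent interpretation of flag \(h\)-numbers from \cref{thm:r-labelling}. The hypotheses of \cref{lem:minimum-shift} are exactly those assumed here: \(P\) is finite and graded with ranks \(0,\ldots,H\), and it has a unique maximum. Hence \(\wideP\) is a bounded graded poset of ranks \(0,\ldots,H+1\), and \cref{eq:shifted-flag-expansion} expresses \(\HH_{P,\rho}(q,t)\) as
\[
  \sum_{S\subseteq[H]}\beta_{\wideP}(S)\,q^{\qsum{S}-|S|}t^{|S|}.
\]

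First I would check that the given \(R\)-labelling of \(\wideP\) satisfies the hypothesis of \cref{thm:r-labelling}: it is an \(R\)-labelling of a bounded graded poset, with rank function \(\widehat\rho\) from \cref{lem:minimum-shift}. That theorem then gives, for every \(S\subseteq[H]\), that \(\beta_{\wideP}(S)\) equals the number of maximal chains of \(\wideP\) with descent set \(S\). In particular each \(\beta_{\wideP}(S)\) is a nonnegative integer.

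Second, I would check the exponents. Every \(s\in S\subseteq[H]\) satisfies \(s\geq1\), so the \(q\)-exponent \(\qsum{S}-|S|=\sum_{s\in S}(s-1)\) is a nonnegative integer. This is the last assertion of \cref{prop:shifted-flag-expansion}. Each summand is therefore a nonnegative integer multiple of a genuine monomial \(q^{e}t^{k}\) with \(e,k\in\NN\), and the finite sum lies in \(\NN[q,t]\).

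I expect no real obstacle. The only point needing care is that the substitution \(t\mapsto t/q\) in \cref{prop:shifted-flag-expansion} could in principle produce negative powers of \(q\); the exponent bound above rules this out. The fixed-denominator normalization already built into \cref{lem:minimum-shift} guarantees that no cancellation or reduction of the fraction enters the argument.
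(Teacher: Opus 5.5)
Your proposal is correct and matches the paper's proof: combine the shifted flag expansion of \cref{prop:shifted-flag-expansion} with the descent count of \cref{thm:r-labelling}, so that every coefficient is a sum of nonnegative chain counts, each attached to a monomial \(q^{\sum_{s\in S}(s-1)}t^{|S|}\) with nonnegative exponents. Your remarks on the exponent bound and on the fixed-denominator normalization are accurate and simply spell out what the paper's short argument leaves implicit.
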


\begin{proof}
Combine \cref{prop:shifted-flag-expansion} with
\cref{thm:r-labelling}. The coefficient of a fixed monomial is a sum of
nonnegative flag-descent counts, each multiplied by \(q^{\sum_{s\in S}(s-1)}\).
\end{proof}

\section{The augmented type-\texorpdfstring{\(B\)}{B} lattice}
\label{sec:lattice}

Let
\[
  \wideL_r=C_r\sqcup\{\bottom\},
  \qquad
  \widehat{\rho}(\bottom)=0,
  \qquad
  \widehat{\rho}(i,c)=c-i+1.
\]
The new element \(\bottom\) lies below every cell.

\begin{proposition}
\label{prop:lattice-operations}
The poset \(\wideL_r\) is a bounded graded lattice.  For nonbottom cells
\(x=(i,c)\) and \(y=(j,d)\),
\begin{align}
  x\vee y
    &=(\min(i,j),\max(c,d)),\label{eq:join}\\
  x\wedge y
    &=
    \begin{cases}
      (\max(i,j),\min(c,d)),
        &\max(i,j)\leq\min(c,d),\\
      \bottom,&\max(i,j)>\min(c,d).
    \end{cases}
    \label{eq:meet}
\end{align}
Its rank function is \(\widehat{\rho}\).
\end{proposition}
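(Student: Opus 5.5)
The plan is to verify the lattice structure directly in coordinates, following the pattern already used in the proof of \cref{thm:trapezoid-family}. First I check that the cell order \cref{eq:cell-order} makes \(\wideL_r\) bounded and graded. The bottom is \(\bottom\). The top is \((1,2r-1)\), since every cell \((i,c)\) has \(i\geq1\) and \(c\leq 2r-i\leq 2r-1\). For gradedness I identify the cover relations of \(C_r\) as exactly the unit moves \((i,c)\lessdot(i-1,c)\) and \((i,c)\lessdot(i,c+1)\), whenever the target lies in \(C_r\).

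The one point needing care is that, inside \(C_r\), an arbitrary relation \((i,c)<(j,d)\) can be realized by unit moves that stay in the trapezoid. Do all left moves first: the cells \((k,c)\) with \(j\leq k\leq i\) satisfy \(k\leq i\leq c\) and \(c\leq 2r-i\leq 2r-k\). Then do the right moves: the cells \((j,e)\) with \(c\leq e\leq d\) satisfy \(j\leq c\leq e\) and \(e\leq d\leq 2r-j\). So the covers are unit moves, and each raises \(c-i+1\) by one. The atoms are the diagonal cells \((i,i)\) at \(\widehat\rho=1\), covering \(\bottom\) at rank \(0\). Hence \(\widehat\rho\) is a rank function.

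Next come the joins. Put \(u=(\min(i,j),\max(c,d))\). It is a cell: \(\min(i,j)\leq\max(c,d)\) is clear. For \(\max(c,d)\leq 2r-\min(i,j)\), if say \(\max(c,d)=c\), then \(c\leq 2r-i\leq 2r-\min(i,j)\). By \cref{eq:cell-order}, \(u\) lies above both \(x\) and \(y\). Any cell above both has first coordinate \(\leq\min(i,j)\) and second coordinate \(\geq\max(c,d)\), so it lies above \(u\). Thus \(u=x\vee y\).

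Then the meets. Put \(v=(\max(i,j),\min(c,d))\). If \(\max(i,j)\leq\min(c,d)\), then \(v\) is a cell, because \(\min(c,d)\leq 2r-\max(i,j)\) follows in the same way from the larger index. It is below both \(x\) and \(y\), and it is the greatest such cell by the same coordinatewise argument. If \(\max(i,j)>\min(c,d)\), then any common lower cell \((k,e)\) would satisfy \(k\geq\max(i,j)>\min(c,d)\geq e\), contradicting \(k\leq e\). So \(\bottom\) is the only common lower bound, and the meet is \(\bottom\). Meets and joins involving \(\bottom\) are trivial, so \(\wideL_r\) is a lattice, and \cref{eq:join} and \cref{eq:meet} hold.

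The main obstacle, though a mild one, is the saturated-chain step. It is what guarantees that \(\widehat\rho\) is a genuine rank function, and it is the only place where the shape of the trapezoid matters beyond closure of the join and meet formulas. The left-then-right ordering settles it because the boundary \(2r-i\) is decreasing in \(i\), which is the same monotonicity used for general Ferrers boundaries in \cref{thm:ferrers-family}.
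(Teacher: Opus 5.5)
Your proposal is correct and takes essentially the same route as the paper. It uses the same cell-membership inequalities \(\max(c,d)\leq 2r-\min(i,j)\) and \(\min(c,d)\leq 2r-\max(i,j)\), the same contradiction with \(k\leq e\) for the bottom meet, and the same left-moves-first intermediate-cell argument for \(\widehat{\rho}\); your explicit saturated left-then-right chain just spells out what the paper states in one line.
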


\begin{proof}
Put \(I=\min(i,j)\) and \(C=\max(c,d)\).  We have \(I\leq C\), and
\[
  C\leq\max(2r-i,2r-j)=2r-I,
\]
so the cell in \cref{eq:join} always lies in \(C_r\).  The coordinate order
in \cref{eq:cell-order} shows that it is the least common upper bound.

For the proposed nonbottom meet, put \(J=\max(i,j)\) and
\(D=\min(c,d)\).  When \(J\leq D\),
\[
  D\leq\min(2r-i,2r-j)=2r-J,
\]
so \((J,D)\) is a cell and is the greatest common lower bound.  When
\(J>D\), any nonbottom common lower bound would need first coordinate at
least \(J\) and second coordinate at most \(D\), contradicting the defining
cell inequality that the first coordinate is at most the second.  Its meet
is therefore \(\bottom\).

Among cells, a cover changes exactly one of \(-i\) and \(c\) by one while
remaining in \(C_r\); the new minimum is covered precisely by the diagonal
cells \((i,i)\).  Equivalently, if two comparable cells have rank difference
at least two, changing \(i\) downward by one when possible, or increasing
\(c\) by one otherwise, supplies an intermediate cell.  Thus
\(\widehat{\rho}\) is the lattice rank.
\end{proof}

\begin{proposition}
\label{prop:lower-semimodular}
The lattice \(\wideL_r\) is lower semimodular.  Equivalently, its rank is
supermodular:
\begin{equation}
\label{eq:supermodular}
  \widehat{\rho}(x)+\widehat{\rho}(y)
  \leq
  \widehat{\rho}(x\wedge y)+\widehat{\rho}(x\vee y)
\end{equation}
for all \(x,y\in\wideL_r\).
\end{proposition}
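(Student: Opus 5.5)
The plan is to verify the supermodular inequality \cref{eq:supermodular} directly by cases, using the explicit join and meet formulas of \cref{prop:lattice-operations}. For a finite graded lattice, lower semimodularity is equivalent to supermodularity of the rank function. This is the standard dual of the characterisation of semimodular lattices by submodular rank, so I will prove only the inequality and quote the equivalence.

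First, the trivial cases. If \(x\) and \(y\) are comparable, then \(\{x\wedge y,x\vee y\}=\{x,y\}\) and equality holds. This covers every case with \(x=\bottom\) or \(y=\bottom\). So assume \(x=(i,c)\) and \(y=(j,d)\) are incomparable cells. By symmetry take \(i\geq j\); incomparability then forces \(c>d\), since otherwise \(x\leq y\). The join is \((j,c)\), with \(\widehat\rho(x\vee y)=c-j+1\).

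Second, the case of a nonbottom meet, where \(i\leq d\). Then \(x\wedge y=(i,d)\), and the right side of \cref{eq:supermodular} is \((d-i+1)+(c-j+1)\). The left side is \((c-i+1)+(d-j+1)\). These agree, so equality holds. The rank is therefore modular on pairs whose meet is a cell.

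Third, the case of a bottom meet, where \(i>d\). Now \(\widehat\rho(x\wedge y)=0\), and the required inequality becomes
\[
\widehat\rho(i,c)+\widehat\rho(j,d)-\widehat\rho(j,c)=d-i+1\leq 0,
\]
which holds because \(d<i\). This is exactly the computation already carried out in the proof of \cref{thm:trapezoid-family}.

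The only point needing care is the equivalence between supermodularity and lower semimodularity, rather than the computation itself. If a direct check is preferred, I would verify the covering form instead. Suppose \(x\) and \(y\) both cover \(z\). If \(z\) is a cell, then \(x\) and \(y\) are the left and right moves from \(z\), and \(x\vee y\) is reached by one further move, so it covers both. If \(z=\bottom\), then \(x=(i,i)\) and \(y=(j,j)\) with \(i>j\), and \(x\vee y=(j,i)\) has rank \(i-j+1\), which exceeds \(2\) when \(i-j\geq2\). So the upper covering property fails here, as it should for a lower semimodular lattice.

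The dual lower covering property (if \(x\vee y\) covers both \(x\) and \(y\), then both cover \(x\wedge y\)) must be checked. When \(x\wedge y\) is a cell, this follows from modularity of the rank on such pairs. When \(x\wedge y=\bottom\), the inequality \(d-i+1\leq0\) together with rank \(1\) for each cover gives \(c-i=0\) and \(d-j=0\). Hence \(x\) and \(y\) are atoms and cover \(\bottom\). This settles the claim either way.
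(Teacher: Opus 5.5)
Your proposal is correct and is essentially the paper's proof. Comparable pairs and pairs with a nonbottom meet give equality by substituting the join and meet formulas, the bottom-meet case reduces to \(d-i+1\leq 0\) because \(d<i\leq c\), and the equivalence with lower semimodularity is quoted as standard. Your extra covering-property check is a harmless bonus, though the upper-covering digression is unnecessary, and lower semimodularity does not force its failure (modular lattices satisfy both covering properties).
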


\begin{proof}
Pairs involving \(\bottom\) give equality.  For two cells with nonbottom
meet, direct substitution of \cref{eq:join,eq:meet} also gives equality.

Suppose the meet is \(\bottom\).  Interchanging \(x\) and \(y\) if necessary,
assume \(i\geq j\).  The failed cell condition is
\(i>\min(c,d)\).  Since \(c\geq i\), it follows that \(d<i\leq c\).
Consequently \(x\vee y=(j,c)\), and
\[
  \widehat{\rho}(x)+\widehat{\rho}(y)
  -\widehat{\rho}(x\vee y)
  =d-i+1\leq 0.
\]
Because \(\widehat{\rho}(x\wedge y)=0\), this is
\cref{eq:supermodular}.
\end{proof}

Independently of the preceding structural property, the cell coordinates
give the direct labelling needed for the descent formula.

\begin{proposition}
\label{prop:explicit-el}
Label the covers of \(\wideL_r\) by integers, with their usual order, as
follows:
\begin{align}
  \lambda\bigl(\bottom,(i,i)\bigr)&=i,\nonumber\\
  \lambda\bigl((i,c),(i-1,c)\bigr)&=-(i-1),\label{eq:explicit-labels}\\
  \lambda\bigl((i,c),(i,c+1)\bigr)&=c+1.\nonumber
\end{align}
Whenever the displayed pairs are covers, this is an EL-labelling of
\(\wideL_r\).  In particular the labels along any maximal chain are
pairwise distinct, so the strictly increasing chain is also the unique
weakly increasing chain.
\end{proposition}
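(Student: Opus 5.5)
We first record the shape of maximal chains in an arbitrary interval \([x,y]\) of \(\wideL_r\). By \cref{prop:lattice-operations}, every cover among cells is either a left move \((i,c)\to(i-1,c)\) or a right move \((i,c)\to(i,c+1)\). The covers of \(\bottom\) are exactly the atoms \((i,i)\).

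Along any maximal chain, the left labels \(-(i-1)\) are negative. They are pairwise distinct, since each left move strictly decreases the first coordinate. The right labels \(c+1\) are positive and pairwise distinct, since each right move strictly increases the second coordinate. Only the first cover of a chain can start at \(\bottom\). Its label is \(i\), where \((i,i)\) is the atom, and every later right label is at least \(i+1\). The first left move from \((i,\cdot)\) has label \(-(i-1)<i\). Hence all labels on a chain are distinct, which gives the final assertion.

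For a nonbottom interval \([x,y]\) with \(x=(i,c)\) and \(y=(j,d)\), a chain is a shuffle of \(i-j\) left moves and \(d-c\) right moves. Along such a chain the left labels run through \(-(i-1),\dots,-j\) in this order, and the right labels through \(c+1,\dots,d\) in this order. So the only way to obtain a strictly increasing chain is to perform every left move before every right move, since any \(R\) followed by \(L\) compares a positive label with a negative one.

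We must check that this chain is legal. The path \((i,c)\to(j,c)\to(j,d)\) stays in \(C_r\): the intermediate cells \((k,c)\) with \(j\le k\le i\) satisfy \(k\le c\), because \(j\le i\le c\), and \(c\le 2r-i\le 2r-k\). The cells \((j,e)\) with \(c\le e\le d\) satisfy \(e\le d\le 2r-j\). The chain is lexicographically first: at each step, if a left move is available its label is negative and therefore smaller than any right label. And it is the unique increasing chain, because every other shuffle contains an \(RL\) descent.

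For an interval \([\bottom,y]\) with \(y=(j,d)\), the atoms below \(y\) are the cells \((k,k)\) with \(j\le k\le\min(r,d)\). A chain starting at \((k,k)\) with \(k>j\) must later make a left move with label \(-(k-1)\). The first left move follows either the atom label \(k\) or some right label greater than \(k\), so it creates a descent. Hence an increasing chain must start at \((j,j)\) and then use only right moves \((j,j)\to\cdots\to(j,d)\). This path is legal because \(d\le 2r-j\), and its labels \(j<j+1<\cdots<d\) increase, so it is the unique increasing chain.

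It is also lexicographically first. Its first label \(j\) is the smallest label among all atoms of the interval. After that first step every remaining chain lies in the interval \([(j,j),y]\), which contains no left moves since both endpoints have first coordinate \(j\). So all such chains coincide. The degenerate intervals \([\bottom,\bottom]\) and \([x,x]\) are trivial.

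The main obstacle is bookkeeping rather than an idea: we must confirm that the comparisons in the bottom case use only the atom label and the first left label, so that distinctness holds across the sign change at the atom. We also need the legality of the canonical chains, which uses the trapezoid inequalities \(c\le 2r-i\) exactly as in \cref{prop:lattice-operations}.
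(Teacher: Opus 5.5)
Your proof is correct and follows essentially the same route as the paper. For nonbottom intervals you use the shuffle argument, where negative left labels must precede positive right labels. For intervals from \(\bottom\) you show the increasing chain is forced to start at the atom \((j,j)\) and then use only right moves, while also spelling out two points the paper leaves implicit: the legality of the all-left-then-right chain, and the pairwise distinctness of labels across the atom cover.
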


\begin{proof}
Consider first an interval from \((i,c)\) to \((j,d)\), with neither
endpoint equal to \(\bottom\).  Every maximal chain is a shuffle of the
left-expansion labels
\[
  -(i-1),-(i-2),\ldots,-j
\]
and the right-expansion labels
\[
  c+1,c+2,\ldots,d,
\]
with each displayed list used in its given order.  Both lists are strictly
increasing.  Because every left label is negative and every right label is
positive, the unique increasing shuffle performs all left expansions before
all right expansions.  At every step where both moves are available, it
chooses the smaller label, so it is also lexicographically first.

Now consider an interval from \(\bottom\) to \((j,d)\).  Its unique
increasing maximal chain begins with the atom \((j,j)\), then makes only
right expansions; its labels are
\[
  j,j+1,\ldots,d.
\]
A chain beginning at an atom \((i,i)\) with \(i>j\) must eventually make a
left expansion, producing a positive-to-negative descent.  Moreover, the
initial label \(j\) is the smallest label of an atom below \((j,d)\).
Hence the displayed chain is uniquely increasing and lexicographically
first.  Degenerate intervals are unique chains, so these cases cover every
interval.
\end{proof}

\section{EL descents and the ballot-word formula}
\label{sec:main-proof}

We now identify the maximal chains and descents of the explicit labelling in
\cref{prop:explicit-el}.  A cover after the initial atom is called an
\(L\)-move if it decreases the first cell coordinate and an \(R\)-move if
it increases the second.

\begin{lemma}
\label{lem:chain-words}
Reading the \(L\)- and \(R\)-moves gives a bijection from the maximal chains
of \(\wideL_r\) to \(\mathcal B_r\).  Under this bijection, the descent set
of the label sequence is \(D(w)\).
\end{lemma}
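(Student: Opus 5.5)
The plan is to analyze a maximal chain of \(\wideL_r\) directly in cell coordinates.

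\emph{Shape of a maximal chain.} A maximal chain starts at \(\bottom\) and climbs to the unique maximum \((1,2r-1)\) of \cref{cor:cell-ranks}. By \cref{prop:lattice-operations} its first cover goes to an atom \((i,i)\) with \(1\leq i\leq r\). Every later cover is an \(L\)-move \((k,c)\to(k-1,c)\) or an \(R\)-move \((k,c)\to(k,c+1)\). To reach the top we need exactly \(i-1\) \(L\)-moves and \(2r-1-i\) \(R\)-moves. The move word \(w\) therefore has length \((i-1)+(2r-1-i)=2r-2=m\).

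\emph{Admissibility.} After a prefix with counts \(L_k,R_k\), the current cell is \((i-L_k,i+R_k)\). The lower cell condition \(i-L_k\leq i+R_k\) and the condition \(i-L_k\geq1\) are automatic. The only real constraint is the upper one, \(i+R_k\leq 2r-(i-L_k)\), that is,
\[
R_k-L_k\leq 2r-2i.
\]
This is the slack-form boundary already used in the proof of \cref{thm:trapezoid-family} with \(s=2r\). At the end of the word \(R_m-L_m=2r-2i\), so the constraint says that every suffix has \(\#R-\#L\geq0\). Hence the reversal of \(w\) is a ballot word.

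Conversely, take \(w\in\mathcal B_r\) with final balance \(b=\#R-\#L\). Since \(b\geq0\) and \(b\equiv m\) is even, we can recover \(i=(2r-b)/2\in\{1,\ldots,r\}\). The suffix inequalities then translate back into the prefix constraints, so the path stays inside \(C_r\) and defines a maximal chain. The two maps are mutually inverse, because the atom is determined by \(\#L(w)=i-1\). For \(r=1\), both sides consist of the single empty word.

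\emph{Descents.} The label sequence is \(i\), followed by the labels of the moves in \(w\). An \(L\)-move out of row \(k\) carries the label \(-(k-1)<0\). An \(R\)-move into column \(c+1\) carries the label \(c+1>0\). Consecutive \(L\)-moves give labels \(-(k-1),-(k-2),\ldots\), which increase. Consecutive \(R\)-moves give increasing column labels. An \(L\) followed by an \(R\) goes from negative to positive, which is an ascent. An \(R\) followed by an \(L\) goes from positive to negative, which is a descent.

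The comparison between the atom label \(i\) and the first move label is a descent exactly when \(w_1=L\). If \(w_1=R\), the first move label is \(i+1>i\). The descent at rank \(p\) compares the labels in positions \(p\) and \(p+1\), that is, the label before \(w_p\) with the label of \(w_p\). So the descent set consists of \(\{1\}\) when \(w_1=L\), together with the positions \(p\) such that \(w_{p-1}=R\) and \(w_p=L\). This is exactly \(D(w)\).

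The main obstacle is purely bookkeeping. One must align the position \(j\) in \(D(w)\) with the rank at which the descent occurs: the label sequence has length \(m+1\), and rank \(p\in[m]\) sits between labels \(p\) and \(p+1\). One must also check the parity and range argument that recovers \(i\) in the converse direction.
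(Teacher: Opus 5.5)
Your proposal is correct and follows essentially the same route as the paper. Both track the cell $(i-L_k,i+R_k)$, reduce the upper boundary $R_k-L_k\le 2(r-i)$ to the suffix (reversed-ballot) condition via equality at the end, recover the atom from the final balance, and read off $D(w)$ by comparing the positive atom label with the negative left labels and positive right labels. The bound $b\le 2r-2$, which you need for $i\ge 1$ and leave implicit, follows at once from the word length $2r-2$.
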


\begin{proof}
A maximal chain begins at an atom \((i,i)\) and ends at
\((1,2r-1)\).  After the initial edge, it has \(i-1\) left moves and
\(2r-1-i\) right moves, hence \(m=2r-2\) moves in total.  If \(L_k\) and
\(R_k\) count the two moves in the first \(k\) letters, the intermediate
cell is
\[
  (i-L_k,i+R_k).
\]
Its right boundary condition is
\begin{equation}
\label{eq:path-boundary}
  R_k-L_k\leq 2(r-i).
\end{equation}
The remaining cell inequalities \(1\leq i-L_k\leq r\) and
\(i-L_k\leq i+R_k\) hold automatically: there are only \(i-1\) letters
\(L\) in the whole word, so the first coordinate decreases from \(i\) to
\(1\) without leaving \(\{1,\ldots,r\}\).  At the end equality holds in
\cref{eq:path-boundary}.  Consequently, \cref{eq:path-boundary} for all
prefixes is equivalent to saying that every suffix of the word has
nonnegative \(R\)-minus-\(L\) balance.  This is exactly the ballot-prefix
condition on the reversed word.

Conversely, let \(w\in\mathcal B_r\), and put
\(b=\#R-\#L\).  The ballot condition gives \(0\leq b\leq 2r-2\), and
\(b\) is even.  Thus \(i=r-b/2\) is an integer in \([1,r]\).  Starting at
\((i,i)\) and following \(w\), the reversed ballot inequalities are exactly
\cref{eq:path-boundary}; the path therefore stays in \(C_r\) and terminates
at \((1,2r-1)\).  This constructs the inverse bijection.

It remains to read the descents.  The maximal chain has \(2r-1\) covers, and
descents occur at internal ranks \(1,\ldots,2r-2\).  The initial label is
the positive integer \(i\).  Consecutive left moves have increasing negative
labels, consecutive right moves have increasing positive labels, and a left
move followed by a right move is also increasing.  Comparing the initial
label with the first move therefore produces a descent at rank \(1\) if and
only if that move is left.  For \(j\geq 2\), comparing the \((j-1)\)-st and
\(j\)-th moves produces a descent at rank \(j\) if and only if a right move
is followed by a left move.  These are exactly the elements of \(D(w)\).
\end{proof}

\begin{proof}[Proof of \cref{thm:main}]
By \cref{lem:cell-model,prop:lattice-operations}, adjoining a minimum to
\(P_r\) gives the bounded graded lattice \(\wideL_r\).  The explicit
EL-labelling of \cref{prop:explicit-el} is an \(R\)-labelling, so
\cref{eq:descent-count,prop:shifted-flag-expansion} give
\[
  \HH_{P_r,\rank}(q,t)
  =
  \sum_{\mathfrak c}
  q^{\qsum{\operatorname{Des}(\mathfrak c)}
      -|\operatorname{Des}(\mathfrak c)|}
  t^{|\operatorname{Des}(\mathfrak c)|},
\]
where the sum is over the maximal chains of \(\wideL_r\).  Apply
\cref{lem:chain-words}: the chains become the words \(w\in\mathcal B_r\),
their descent sets become \(D(w)\), and
\[
  \qsum{D(w)}-|D(w)|  =
  \sum_{j\in D(w)}(j-1).
\]
This is the asserted formula.  Every summand is a monomial with coefficient
one and nonnegative exponents, proving coefficientwise nonnegativity as
well.  For \(r=1\), the unique chain corresponds to the empty word and the
formula gives \(\HH_{P_1,\rank}(q,t)=1\).
\end{proof}

\begin{corollary}[Type-$C$ companion formula]
\label{cor:type-C}
For the standard positive-root poset $P_r^{C}=\Phi^+(C_r)$, graded by
root height minus one and with Chapoton's denominator convention, one has
\[
  \HH_{P_r^{C},\rank}(q,t)
  =\sum_{w\in\mathcal B_r}
    q^{\sum_{j\in D(w)}(j-1)}t^{|D(w)|}.
\]
In particular, its \(q\)-Zeta numerator is coefficientwise nonnegative for
every $r\geq1$.
\end{corollary}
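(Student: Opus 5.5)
The plan is to transport \cref{thm:main} across the rank-preserving isomorphism of \cref{lem:cell-model-C}. The \(q\)-Zeta numerator \(\HH_{P,\rho}\) is defined purely from the weak-chain sums \(A_m(P,\rho;q)\) and the fixed denominator \(\prod_{a=0}^{H}(1-q^at)\). Both ingredients depend only on the isomorphism class of the pair \((P,\rho)\). So the first step is to record this invariance. Suppose \(\phi\colon P\to Q\) is an order isomorphism with \(\rho_Q\circ\phi=\rho_P\). Then \(\phi\) bijects weak chains \(x_1\leq\cdots\leq x_m\) with weak chains of \(Q\) and preserves their weights \(q^{\sum\rho(x_k)}\). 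It also preserves the maximal rank \(H\) and the existence of a unique maximum. Hence \(\FF_{P,\rho}=\FF_{Q,\rho_Q}\), and comparing numerators over the common denominator \cref{eq:numerator-definition} gives \(\HH_{P,\rho}=\HH_{Q,\rho_Q}\).

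The second step applies this twice. \Cref{lem:cell-model-C} gives a rank-preserving isomorphism \(P_r^C\cong C_r\), where the rank on \(C_r\) is \(c-i\) and this equals height minus one on the root side. \Cref{lem:cell-model} gives a rank-preserving isomorphism \(P_r\cong C_r\) with the same rank \(c-i\). Composing the two, \(P_r^C\) and \(P_r=\PhiPlus(B_r)\) are isomorphic as graded posets, with both ranks equal to height minus one. Therefore \(\HH_{P_r^C,\rank}=\HH_{P_r,\rank}\), and \cref{thm:main} yields the displayed ballot-word formula. Coefficientwise nonnegativity follows immediately, since every summand is a monomial with coefficient one.

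As an alternative, one could also cite \cref{thm:trapezoid-family} with \(s=2r\), using \(T_{r,2r}=C_r\) and \(\mathcal B_{r,2r}=\mathcal B_r\). This gives the same conclusion directly from the cell model.

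The only point needing care is the normalization. Chapoton's denominator depends on \(H\), so the plan must confirm that \(P_r^C\) has ranks \(0,\ldots,2r-2\) and a unique maximum. Both facts transfer from \cref{cor:cell-ranks} through the isomorphism: the maximum of \(P_r^C\) is \(2e_1\mapsto(1,2r-1)\). The case \(r=1\) is the one-point poset and gives numerator \(1\), matching the empty word. There is no substantive obstacle beyond checking that \cref{lem:cell-model-C} really is rank-preserving, and that lemma already asserts it.
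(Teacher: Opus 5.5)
Your proposal is correct and matches the paper's proof. Both compose \cref{lem:cell-model-C} with \cref{lem:cell-model} to identify \(P_r^C\) with \(P_r\) as ranked posets, note that the \(q\)-Zeta numerator is invariant under rank-preserving isomorphism, and invoke \cref{thm:main}; your explicit check that \(H=2r-2\) and the unique maximum transfer, so that Chapoton's fixed denominator agrees on both sides, is a detail the paper leaves implicit.
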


\begin{proof}
By \cref{lem:cell-model-C}, the ranked posets $P_r^{C}$ and $P_r$ are
isomorphic.  The \(q\)-Zeta numerator is defined from rank-weighted weak-chain
enumeration, hence is invariant under a rank-preserving poset isomorphism.
The assertion follows from \cref{thm:main}.
\end{proof}

The formula gives sharp global information that is not visible from
nonnegativity alone.

\begin{corollary}
\label{cor:sharp-degree}
For every \(r\geq1\),
\[
  \HH_{P_r,\rank}(1,1)=\binom{2r-2}{r-1},
  \qquad
  \deg_t\HH_{P_r,\rank}=r-1,
\]
and the coefficient of the highest power of \(t\) is
\[
  [t^{r-1}]\HH_{P_r,\rank}(q,t)=q^{(r-1)(r-2)}.
\]
Moreover, for \(0\leq k\leq r-1\),
\[
  [t^k]\HH_{P_r,\rank}(1,t)=\binom{r-1}{k}^{2}.
\]
\end{corollary}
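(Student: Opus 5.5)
The plan is to reduce everything to the trapezoidal family with \(s=2r\) and read each assertion off the corollaries already proved there. By \cref{lem:cell-model}, \(P_r\) is rank-isomorphic to \(C_r=T_{r,2r}\). \Cref{thm:trapezoid-family} records that \(\mathcal B_{r,2r}=\mathcal B_r\), and \cref{thm:main} gives the same word sum. Hence \(\HH_{P_r,\rank}=\HH_{T_{r,2r},\rho}\). For \(r=1\) both sides equal \(1\) and every assertion is immediate, so I will assume \(r\geq2\) where the trapezoidal corollaries need it.

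First, \cref{cor:trapezoid-specialization} with \(s=2r\) gives \(\HH(1,1)=\binom{2r-2}{r-1}\) and \(\deg_t=r-1\). For the leading coefficient I will use \cref{cor:ferrers-top-slice} in its trapezoidal form. With \(\delta=s-2r=0\), the Gaussian binomial \(\qbinom{s-r-1}{r-1}=\qbinom{r-1}{r-1}=1\), so the top coefficient is \(q^{(r-1)(r-2)}\). As a cross-check, \cref{cor:trapezoid-slice-degree} says the unique top word is \((RL)^{r-1}\) reversed, namely \((LR)^{r-1}\). Its run starts are at positions \(1,3,\ldots,2r-3\), giving the exponent \(\sum_{a=1}^{r-1}2(a-1)=(r-1)(r-2)\).

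For the slice counts, \cref{cor:trapezoid-qone-profile} at \(s=2r\) gives \([t^k]\HH(1,t)=\binom{r-1}{k}\binom{s-r-1}{k}=\binom{r-1}{k}\binom{r-1}{k}=\binom{r-1}{k}^2\). This holds for all \(0\leq k\leq r-1\).

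There is no real obstacle, since every step is a specialization. The only point needing care is the isomorphism: the rank used in \cref{thm:main} (height minus one) must match \(\rho(i,c)=c-i\) on \(T_{r,2r}\). \Cref{lem:cell-model} supplies exactly this, so the two numerators agree before specialization.
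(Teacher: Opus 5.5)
Your proposal is correct. For the total and the degree it is essentially the paper's argument, since \cref{cor:trapezoid-specialization} uses the same reflection-principle count. The real difference is in the \(q=1\) slices.

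The paper works directly in the balanced case, with \(n=r-1\). It forces the unique top word \(w=(LR)^n\) from \(n\) isolated left letters and the terminal \(R\). It then obtains \(\binom{n}{k}^2\) from \cref{lem:ballot-runs}, a self-contained count of ballot words by number of \(L\)-runs. That count uses the first-return decomposition, Lagrange--B\"urmann inversion, and a telescoping sum.

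You instead use \(P_r\cong T_{r,2r}\) and specialize the trapezoidal results at slack \(\delta=s-2r=0\). In \cref{cor:ferrers-top-slice}, \(\qbinom{r-1}{r-1}=1\) collapses the top slice. In \cref{cor:trapezoid-qone-profile}, \(\binom{r-1}{k}\binom{s-r-1}{k}\) becomes a square. This is legitimate: those results are proved in earlier sections without invoking \cref{cor:sharp-degree} or \cref{lem:ballot-runs}, so there is no circularity.

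Your route shows the Narayana-square specialization as the \(\delta=0\) member of a two-parameter product formula. Its cost is that the \(q=1\) count now rests on the endpoint decomposition \cref{thm:trapezoid-endpoint}. That in turn rests on the cited Krattenthaler--Mohanty one-boundary formula, which at \(q=1\) is the ballot count \(\binom{a-1}{k-1}\binom{b+1}{k}-\binom{a}{k}\binom{b}{k-1}\) per endpoint. It also needs the partial Vandermonde telescoping. The paper's \cref{lem:ballot-runs} avoids that external input entirely.

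Your cross-check via \cref{cor:trapezoid-slice-degree} is also valid. It works because at \(s=2r\), \(k=r-1\), the lower bound \(k(k-1)\) equals the upper degree \(k(s-k-3)\), which forces the slice to be a single monomial.
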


\subsection{Spectral consequences at
  \texorpdfstring{\(q=1\)}{q=1}}
\label{sec:spectral-consequences}

The preceding specialization has useful one-variable and asymptotic
consequences.

\begin{corollary}[Real-rootedness at $q=1$]
\label{cor:real-rooted}
For $r\geq2$, the polynomial
\[
  \HH_{P_r,\rank}(1,t)=\sum_{k=0}^{r-1}\binom{r-1}{k}^{2}t^k
\]
has $r-1$ distinct negative real zeros.  In particular, its coefficient
sequence is strictly log-concave in the interior and unimodal.
\end{corollary}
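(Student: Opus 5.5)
The identity \(\HH_{P_r,\rank}(1,t)=\sum_{k=0}^{r-1}\binom{r-1}{k}^2t^k\) is already available from \cref{cor:sharp-degree}. Alternatively, it is the case \(s=2r\) of \cref{cor:trapezoid-qone-profile}, since \(T_{r,2r}=C_r\cong P_r\) by \cref{lem:cell-model} and the trapezoidal theorem. So the corollary is about zeros and coefficients of \(J_{A,A}(t)\) with \(A=r-1\geq1\), in the notation of \cref{prop:trapezoid-qone-kernel}.

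For the zeros, I would use the Jacobi representation from \cref{cor:trapezoid-qone-profile} with \(B=A\):
\[
J_{A,A}(t)=(1-t)^A P_A^{(0,0)}\!\left(\frac{1+t}{1-t}\right),
\]
which involves the Legendre polynomial. Its \(A\) zeros are simple and lie in \((-1,1)\). The Möbius map \(x\mapsto t=(x-1)/(x+1)\) is a strictly increasing bijection of \((-1,1)\) onto \((-\infty,0)\), so it produces \(A\) distinct negative zeros of \(J_{A,A}\).

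To make sure no zeros are lost or added, I would check degrees. The constant term of \(J_{A,A}\) is \(1\), so \(t=0\) is not a root. The leading coefficient is \(\binom{A}{A}^2=1\), so \(\deg J_{A,A}=A\). The factor \((1-t)^A\) clears the denominators. At \(t=1\), the left side equals \(\binom{2A}{A}\neq0\), so no zero is hidden at the pole. Hence the \(A=r-1\) zeros are exactly the images of the Legendre zeros. This is already stated in \cref{cor:trapezoid-qone-profile}, so this step is mainly a specialization.

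For the coefficient statements, I would apply Newton's inequality. A real-rooted polynomial \(\sum_k a_kt^k\) of degree \(n\) with positive coefficients satisfies
\[
a_k^2\geq a_{k-1}a_{k+1}\,\frac{(1+1/k)(1+1/(n-k))}{1}.
\]
For \(1\leq k\leq n-1\), the factor \((1+1/k)(1+1/(n-k))\) is strictly greater than \(1\), and all coefficients \(\binom{A}{k}^2\) are positive. Therefore \(a_k^2>a_{k-1}a_{k+1}\) at every interior index, which is strict log-concavity. Positive log-concave sequences with no internal zeros are unimodal, which completes the argument.

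The only point needing care, and the step I expect to be the main obstacle, is the degenerate case \(r=2\). There the polynomial is \(1+t\), with the single zero \(-1\), and there is no interior index, so the log-concavity assertion holds vacuously. Otherwise everything follows from earlier results; the only real check is that the Jacobi transform preserves degree, which the leading-coefficient computation above confirms.
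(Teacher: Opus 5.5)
Your proposal is correct and follows essentially the same route as the paper. Both arguments use the Legendre representation \(\sum_k\binom{n}{k}^2t^k=(1-t)^nP_n\bigl((1+t)/(1-t)\bigr)\), which you obtain as the \(B=A\) case of the Jacobi formula in \cref{cor:trapezoid-qone-profile}; transport the simple Legendre zeros from \((-1,1)\) to \((-\infty,0)\) by the increasing M\"obius map; exclude extra zeros by a degree count; and get strict log-concavity and unimodality from Newton's inequalities, while your checks of the leading coefficient and of the case \(r=2\) spell out what the paper leaves implicit.
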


\begin{proof}
Put $n=r-1$.  The classical Legendre identity gives
\[
  \sum_{k=0}^{n}\binom{n}{k}^{2}t^k
  =(1-t)^n P_n\!\left(\frac{1+t}{1-t}\right),
\]
where $P_n$ is the Legendre polynomial.  The $n$ zeros of $P_n$ are simple
and lie in $(-1,1)$.  Under the inverse fractional-linear change
\(t=(x-1)/(x+1)\), this interval maps bijectively to the negative real
axis, and the displayed identity shows that no additional finite zeros
occur.  Strict log-concavity and unimodality follow from Newton's
inequalities.
\end{proof}

\begin{corollary}[\(\gamma\)-positivity at \(q=1\)]
\label{cor:gamma-positive}
Let \(n=r-1\).  Then
\[
  \HH_{P_r,\rank}(1,t)
  =\sum_{j=0}^{\lfloor n/2\rfloor}
    \binom{n}{2j}\binom{2j}{j}\,
    t^j(1+t)^{n-2j}.
\]
In particular, the \(q=1\) specialization is \(\gamma\)-positive, with
\(\gamma_j=\binom{n}{2j}\binom{2j}{j}\).
\end{corollary}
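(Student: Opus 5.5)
The plan is to compare coefficients of \(t^k\) on both sides, and to identify each coefficient by a single double-counting argument. This avoids any appeal to the Legendre representation in \cref{cor:real-rooted}. By \cref{cor:sharp-degree}, the left-hand side is \(\sum_{k=0}^{n}\binom{n}{k}^2t^k\) with \(n=r-1\). Expanding \((1+t)^{n-2j}\) by the binomial theorem, the coefficient of \(t^k\) on the right-hand side is
\[
  \sum_{j\geq 0}\binom{n}{2j}\binom{2j}{j}\binom{n-2j}{k-j}.
\]
The summand vanishes unless \(j\leq\min\{k,n-k\}\). So it suffices to prove the identity
\[
  \binom{n}{k}^2=\sum_{j}\binom{n}{2j}\binom{2j}{j}\binom{n-2j}{k-j}
  \qquad(0\leq k\leq n).
\]

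The key step is to count ordered pairs \((A,B)\) of \(k\)-element subsets of \([n]\) in two ways. Directly, there are \(\binom{n}{k}^2\) such pairs. Alternatively, classify each pair by \(j=|A\setminus B|\). Since \(|A|=|B|\), this also equals \(|B\setminus A|\), so the symmetric difference \(A\triangle B\) has exactly \(2j\) elements. The pair is then determined by three independent choices:
\begin{itemize}
  \item the set \(A\triangle B\), in \(\binom{n}{2j}\) ways;
  \item which \(j\) of its elements lie in \(A\) (the rest lying in \(B\)), in \(\binom{2j}{j}\) ways;
  \item the common part \(A\cap B\), a \((k-j)\)-subset of the remaining \(n-2j\) elements, in \(\binom{n-2j}{k-j}\) ways.
\end{itemize}
This map is clearly a bijection onto the pairs with the given \(j\), and summing over \(j\) gives the identity.

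Consequently the displayed \(\gamma\)-expansion holds. Each coefficient \(\gamma_j=\binom{n}{2j}\binom{2j}{j}\) is a positive integer for \(0\leq j\leq\lfloor n/2\rfloor\), which gives \(\gamma\)-positivity. The palindromicity implicit in a \(\gamma\)-expansion with center \(n/2\) is consistent with \(\binom{n}{k}^2=\binom{n}{n-k}^2\), so no separate check is needed.

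There is no real obstacle here. The only point needing care is to confirm that the summation ranges match, i.e.\ that the terms with \(j>\lfloor n/2\rfloor\) or \(j>k\) vanish under the paper's binomial conventions.
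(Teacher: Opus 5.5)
Your proposal is correct and follows the paper's proof: both use \cref{cor:sharp-degree} to reduce the claim to the coefficient identity \(\binom{n}{k}^2=\sum_j\binom{n}{2j}\binom{2j}{j}\binom{n-2j}{k-j}\). The paper rewrites the summand as \(\binom{n}{k}\binom{k}{j}\binom{n-k}{j}\) and applies Vandermonde; your double count of pairs \((A,B)\) of \(k\)-subsets is the bijective form of that same computation, and your handling of the summation ranges is right.
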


\begin{proof}
By \cref{cor:sharp-degree}, it remains to rewrite
\(\sum_{k=0}^n\binom{n}{k}^2t^k\).  The coefficient of \(t^k\) in the
right-hand side is
\[
 \sum_{j=0}^{\min\{\lfloor n/2\rfloor,k,n-k\}}
 \frac{n!}{j!^2(k-j)!(n-k-j)!}
 =\binom{n}{k}\sum_j\binom{k}{j}\binom{n-k}{j}
 =\binom{n}{k}^2,
\]
where the last equality is Vandermonde's identity.  This proves the
displayed expansion and the asserted \(\gamma\)-positivity.
\end{proof}

\begin{corollary}[Sharp $q$-degree in each $t$-slice]
\label{cor:slice-degree}
Put $n=r-1$.  For $0\leq k\leq n$, the $t^k$-coefficient satisfies
\[
  [t^k]\HH_{P_r,\rank}(q,t)
  \in q^{k(k-1)}\NN[q],
  \qquad
  \deg_q [t^k]\HH_{P_r,\rank}(q,t)=k(2n-k-1).
\]
Moreover, the coefficient of the highest power
\(q^{k(2n-k-1)}\) is equal to~$1$.
\end{corollary}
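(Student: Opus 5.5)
The quickest route is to specialize \cref{cor:trapezoid-slice-degree} to \(s=2r\). By \cref{lem:cell-model}, \(P_r\) is rank-isomorphic to \(C_r=T_{r,2r}\), and \cref{thm:trapezoid-family} gives \(\mathcal B_{r,2r}=\mathcal B_r\), so \(\HH_{P_r,\rank}=\HH_{T_{r,2r},\rho}\). With \(s=2r=2n+2\), the degree \(k(s-k-3)\) becomes \(k(2n-k-1)\). The membership in \(q^{k(k-1)}\NN[q]\) and the unique leading monomial then transfer verbatim.

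For a self-contained argument, I would rerun the trapezoidal proof directly from \cref{thm:main}.

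\textbf{Lower bound.} Fix \(w\in\mathcal B_r\) with \(D(w)=\{j_1<\cdots<j_k\}\). Distinct \(L\)-runs are separated by at least one \(R\), so \(j_a\geq 2a-1\). Hence \(\sum_a(j_a-1)\geq k(k-1)\). Together with nonnegativity of coefficients, this gives membership in \(q^{k(k-1)}\NN[q]\).

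\textbf{Upper bound.} Pass to the reversed word \(v\), which is a ballot word of length \(m=2n\). A run start \(j_a\) of \(w\) corresponds to an \(L\) in \(v\) at position \(p_a=m+1-j_a\). Order the \(p_a\) increasingly. The first \(p_a\) letters of \(v\) contain at least \(a\) letters \(L\), so the ballot condition forces at least \(a\) letters \(R\) there, and therefore \(p_a\geq 2a\). Summing,
\[
\sum_a (j_a-1)=\sum_a (m-p_a)\leq \sum_{a=1}^k(2n-2a)=k(2n-k-1).
\]

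\textbf{Uniqueness of the leading monomial.} This is the step needing the most care. Equality forces \(p_a=2a\) for every \(a\). Combined with the ballot condition, this makes \(v\) begin with \((RL)^k\). I must also check that no further \(L\) occurs beyond position \(2k\): any further \(L\) would either create an additional run start or merge into a run whose start is not at \(p_a\). In either case the equality pattern breaks, so all remaining letters are \(R\).

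The candidate is therefore \(v=(RL)^kR^{2n-2k}\). I then verify that it is ballot, that its reversal \(w=R^{2n-2k}(LR)^k\) has exactly \(k\) run starts, and that the exponent is attained. Admissibility requires \(k\leq n\), which holds by hypothesis. So the top coefficient is exactly \(1\), and the degree is sharp.
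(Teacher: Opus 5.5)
Your proposal is correct and follows the paper's proof essentially step for step: the lower bound comes from \(j_a\geq 2a-1\), the upper bound from \(p_a\geq 2a\) in the reversed ballot word, and equality forces \(v=(RL)^kR^{2n-2k}\). Your extra care in excluding a further \(L\) after position \(2k\) fills in a step the paper states tersely, and your shortcut via \cref{cor:trapezoid-slice-degree} at \(s=2r\) is also valid, since that corollary's proof is the same argument with \(m=s-2\) and \(k(s-k-3)\) becomes \(k(2n-k-1)\).
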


\begin{proof}
Write the elements of $D(w)$ as $j_1<\cdots<j_k$.  Two consecutive
left-run starts are separated by at least one right letter, so
$j_a\geq 2a-1$.  Hence
\[
  \sum_{a=1}^k(j_a-1)\geq \sum_{a=1}^k(2a-2)=k(k-1).
\]
For the upper bound, reverse $w$ and write $m=2n$.  A marked position in
$D(w)$ becomes either a valley, an $L$ followed by an $R$, or a terminal
$L$ in the ballot word.  If these marked positions are
$p_1<\cdots<p_k$, the ballot condition and the initial $R$ imply
$p_a\geq 2a$.  Therefore
\[
  \sum_{a=1}^k(j_a-1)=\sum_{a=1}^k(m-p_a)
  \leq \sum_{a=1}^k(m-2a)=k(2n-k-1).
\]
Equality forces the reversed word to be
\[
  (RL)^kR^{2n-2k},
\]
whose reversal is in $\mathcal B_r$ and has the required run starts.
Thus the upper degree is attained by exactly one word, proving the final
assertion.
\end{proof}

\begin{lemma}\label{lem:ballot-runs}
Let \(a(n,k)\) be the number of words of length \(2n\) whose every prefix
has at least as many \(R\)'s as \(L\)'s and which have exactly \(k\)
\(L\)-runs.  Then
\[
  a(n,k)=\binom{n}{k}^{2}.
\]
\end{lemma}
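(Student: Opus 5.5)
The plan is to deduce the count from results already in the paper. Reversal sends words of length \(2n\) whose every prefix has at least as many \(R\)'s as \(L\)'s (ballot words) bijectively onto \(\mathcal B_{n+1}\), which consists of words of length \(m=2(n+1)-2=2n\) whose reversal is ballot. Reversal also preserves the number of \(L\)-runs. Moreover \(|D(w)|\) is exactly the number of \(L\)-runs of \(w\). Hence
\[
  a(n,k)=\#\{w\in\mathcal B_{n+1}:|D(w)|=k\}
  =[t^k]\HH_{P_{n+1},\rank}(1,t),
\]
where the second equality is \cref{thm:main} specialized at \(q=1\).

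The key step is to identify this coefficient via the trapezoidal machinery rather than the type-\(B\) statement \cref{cor:sharp-degree}. The specialization there presumably relies on this very lemma, so citing it would be circular. Instead I use \cref{thm:trapezoid-family} with \(s=2r\) and \(r=n+1\); it says \(\mathcal B_{r,2r}=\mathcal B_r\) and \(T_{r,2r}=C_r\). Then \cref{cor:trapezoid-qone-profile} gives
\[
[t^k]\HH_{T_{r,2r},\rho}(1,t)=\binom{r-1}{k}\binom{2r-r-1}{k}=\binom{n}{k}^2.
\]
That corollary was proved independently, from the Krattenthaler--Mohanty endpoint decomposition and a partial Vandermonde telescoping, so there is no circularity. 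This proves \(a(n,k)=\binom nk^2\).

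The main obstacle is making sure the dependency chain is sound. \cref{cor:trapezoid-qone-profile} sits in \cref{sec:trapezoid-family}, which uses only \cref{lem:minimum-shift} and the cited one-boundary formula, not \cref{cor:sharp-degree}, so the argument is fine. If a self-contained bijective proof is preferred, I would fall back on a direct argument. Encode a ballot word with \(j\) letters \(L\) and \(k\) \(L\)-runs by its run-start and run-end positions. Count via the cycle lemma or reflection: the unconstrained count of words with \(k\) \(L\)-runs, \(j\) \(L\)'s and \(2n-j\) \(R\)'s is \(\binom{j-1}{k-1}\binom{2n-j+1}{k}\). Subtract the reflected count \(\binom{j}{k}\binom{2n-j}{k-1}\), which comes from reflecting at the first violation; this changes the run structure in a controlled way only when the reflection is taken at the end of an \(L\)-run. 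Summing over \(j\le n\) then telescopes by the partial Vandermonde identity already displayed in the proof of \cref{cor:trapezoid-qone-profile}. The delicate point in that fallback is checking that the reflection preserves the number of runs, which is exactly why I prefer to cite the corollary.
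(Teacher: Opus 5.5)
Your argument is correct, and your circularity check holds. \cref{cor:trapezoid-qone-profile} rests only on \cref{thm:trapezoid-family}, on \cref{thm:trapezoid-endpoint} (hence on the cited one-boundary formula) and on the partial Vandermonde telescoping. None of these uses \cref{cor:sharp-degree} or this lemma.

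Your route differs from the paper's, which proves the lemma as a self-contained enumerative statement:
\begin{itemize}
\item the first-return decomposition gives $C=1+zC(C+u-1)$ for Dyck words, with $u$ marking $L$-runs;
\item the factorization $D_0RD_1R\cdots RD_h$ of a ballot word gives $E=C/(1-zC^2)=\sum_j z^jC^{2j+1}$;
\item Lagrange--B\"urmann then gives the number of ballot words of length $2n$ with $a$ letters $L$ and $k$ $L$-runs as $\frac{2n-2a+1}{a}\binom{a}{k}\binom{2n-a}{k-1}$;
\item a Pascal telescoping over $a$ gives $\binom{n}{k}^{2}$.
\end{itemize}
You obtain the same fixed-$a$ count as $\binom{a-1}{k-1}\binom{2n-a+1}{k}-\binom{a}{k}\binom{2n-a}{k-1}$. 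This comes from the $q=1$ case of the Krattenthaler--Mohanty formula plus the initial-$L$ correction in \cref{thm:trapezoid-endpoint}. By Pascal's rule the two expressions coincide, so both proofs telescope literally the same sum. They differ only in how the fixed-endpoint count is produced.

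What each route buys:
\begin{itemize}
\item Yours is shorter given \cref{sec:trapezoid-family}. It also shows that the lemma, and with it the last claim of \cref{cor:sharp-degree}, is just the $s=2r$ case of \cref{cor:trapezoid-qone-profile}.
\item The paper's avoids the external one-boundary formula entirely and produces the bivariate series $E(z,u)$. It therefore gives an independent check on that case.
\end{itemize}

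Two minor points. First, the detour through \cref{thm:main} and $\HH_{P_{n+1},\rank}$ is unnecessary. \cref{thm:trapezoid-family} with $s=2r$ already identifies your count with $[t^k]\HH_{T_{r,2r},\rho}(1,t)$.

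Second, your fallback is not a proof as sketched. Reflecting at the first violation swaps the roles of $L$-runs and $R$-runs on the reflected segment, and it can split or merge the run at the cut. So the subtracted term $\binom{j}{k}\binom{2n-j}{k-1}$ needs a run-adapted bijection, or the cited formula, rather than plain reflection.
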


\begin{proof}
Let \(C(z,u)\) be the generating function of Dyck words, with \(z\)
marking semilength and \(u\) marking \(L\)-runs, using the same convention
that \(R\) is an up-step.  The first-return
decomposition gives
\[
  C=1+zC(C+u-1).
\]
Indeed, in a first-return factor \(R A L B\), the outer pair creates a new
\(L\)-run exactly when \(A\) is empty.  Here the number of \(L\)-runs is
also the number of peaks.  For further Catalan path statistics with the
Narayana distribution, see
\cite[Section~1 and Propositions~2--3]{sulanke1998}.
Every ballot word has a unique
factorization
\[
  D_0 R D_1 R\cdots R D_h,
\]
where the \(D_i\) are Dyck words and \(h\) is the final height.  Taking the
even-length part (so \(h=2j\)) therefore gives the generating function
\[
  E(z,u)=\sum_{n,k}a(n,k)z^n u^k
       =\frac{C(z,u)}{1-zC(z,u)^2}.
\]
Put \(Y=C-1\), so \(Y=z(1+Y)(u+Y)\).  The coefficients of \(E\) are
extracted from this functional equation by Lagrange--Bürmann inversion.
First expand
\[
  E(z,u)=\sum_{j\geq0}z^j C(z,u)^{2j+1}.
\]
For \(s\geq1\), \(p\geq1\), and \(1\leq k\leq s\), inversion applied to
\(Y=z(1+Y)(u+Y)\) gives
\[
\begin{aligned}
 [z^s u^k]C(z,u)^p
 &=\frac{p}{s}[y^{s-1}u^k]
      (1+y)^{p+s-1}(u+y)^s \\
 &=\frac{p}{s}\binom{s}{k}\binom{p+s-1}{k-1}.
\end{aligned}
\]
Consequently, for \(n\geq1\) and \(1\leq k\leq n\), with \(s=n-j\),
\[
 [z^n u^k]E
 =\sum_{s=k}^{n}
   \frac{2n-2s+1}{s}\binom{s}{k}\binom{2n-s}{k-1}.
\]
To evaluate this finite sum, use
\(\binom{s}{k}/s=\binom{s-1}{k-1}/k\), put \(A=s-1\) and
\(B=2n-1-A\), and apply Pascal's identity:
\[
\begin{aligned}
 &\frac{B-A}{k}\binom{A}{k-1}\binom{B}{k-1} \\
 &\quad=\binom{A}{k-1}\binom{B}{k}
       -\binom{A}{k}\binom{B}{k-1} \\
 &\quad=\binom{A+1}{k}\binom{2n-A-1}{k}
       -\binom{A}{k}\binom{2n-A}{k}.
\end{aligned}
\]
The sum therefore telescopes from \(A=k-1\) to \(A=n-1\), and is
\(\binom{n}{k}^{2}\).  For \(k=0\), setting \(u=0\) gives
\(C(z,0)=1\) and \(E(z,0)=1/(1-z)\), so the coefficient is \(1\); for
\(n=0\) the empty word gives the same conclusion.  Hence
\([z^n u^k]E(z,u)=\binom{n}{k}^{2}\) in all cases.
\end{proof}

\begin{proof}[Proof of \cref{cor:sharp-degree}]
Write \(n=r-1\).  Reversal identifies \(\mathcal B_r\) with the ballot
words of length \(2n\).  By the reflection principle, the number ending at
height \(2k\) is
\[
  \binom{2n}{n-k}-\binom{2n}{n-k-1}
  \qquad(0\leq k\leq n),
\]
where \(\binom{2n}{-1}=0\).  Summing over \(k\) telescopes to
\(\binom{2n}{n}\), proving the first identity.

Every reversed ballot word has at most \(n\) letters \(L\), so
\(|D(w)|\leq n\).  Equality forces exactly \(n\) letters \(L\), each
starting a separate run.  For \(n=0\) the unique word is empty.  For
\(n\geq 1\), a nonempty word in \(\mathcal B_r\) ends in \(R\), and the
unique such word with \(n\) isolated letters \(L\) is
\[
  w=(LR)^n.
\]
Its descent positions are \(1,3,\ldots,2n-1\), and hence its \(q\)-exponent
is
\[
  \sum_{a=0}^{n-1}2a=n(n-1).
\]
This proves both the sharp degree and the stated leading coefficient.

It remains to evaluate the coefficients at \(q=1\).  By
\cref{thm:main}, the coefficient of \(t^k\) is the number of ballot words
of length \(2n\) with exactly \(k\) runs of the letter \(L\); reversal
preserves the number of runs.  \cref{lem:ballot-runs} therefore yields
\[
  [t^k]\HH_{P_r,\rank}(1,t)=\binom{r-1}{k}^{2}.
\]
\end{proof}

\begin{corollary}[Interlacing across Lie rank]
\label{cor:rank-interlacing}
Set
\[
  B_n(t)=\sum_{k=0}^{n}\binom{n}{k}^{2}t^k
  \qquad(n\geq0).
\]
For every \(n\geq1\), the \(n\) zeros of \(B_n\) and the \(n+1\) zeros of
\(B_{n+1}\) are all simple, negative, and strictly interlace on
\((-\infty,0)\).  Equivalently, the \(q=1\) specializations of the
type-\(B/C\) numerators form a Sturm sequence after the rank shift
\(n=r-1\).
\end{corollary}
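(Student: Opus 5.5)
We reduce the statement to the fixed-offset interlacing result already proved, \cref{cor:trapezoid-fixed-offset-interlacing}, taken at offset \(\kappa=0\). The first step is to identify \(B_n\) with the polynomial \(J_{n,n}(t)\) of \cref{prop:trapezoid-qone-kernel}. By definition,
\[
  J_{n,n}(t)=\sum_{k\geq0}\binom{n}{k}\binom{n}{k}t^k=B_n(t).
\]
Consistently with this, \cref{cor:sharp-degree} and \cref{cor:trapezoid-qone-profile} with \(s=2r\) give \(\HH_{P_r,\rank}(1,t)=J_{r-1,r-1}(t)\). The Jacobi representation in \cref{cor:trapezoid-qone-profile}, with \(A=B=n\), then reads
\[
  B_n(t)=(1-t)^nP_n^{(0,0)}\!\left(\frac{1+t}{1-t}\right),
\]
and \(P_n^{(0,0)}=P_n\) is the Legendre polynomial. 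This is the same identity used in \cref{cor:real-rooted}.

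The second step establishes simplicity and negativity. The map \(t\mapsto x=(1+t)/(1-t)\) is a strictly increasing bijection from \((-\infty,0)\) onto \((-1,1)\), with inverse \(t=(x-1)/(x+1)\). The leading coefficient of \(B_n\) is \(\binom{n}{n}^2=1\), so \(\deg B_n=n\). The \(n\) simple zeros of \(P_n\) in \((-1,1)\) therefore pull back to \(n\) distinct negative zeros of \(B_n\). The factor \((1-t)^n\) adds no zeros, since \(B_n(1)=\binom{2n}{n}\neq0\), and none are missing, because the count matches the degree. This reproduces \cref{cor:real-rooted} for both \(B_n\) and \(B_{n+1}\).

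The third step proves the interlacing. Consecutive Legendre polynomials \(P_n\) and \(P_{n+1}\), being orthogonal polynomials for a positive weight on \((-1,1)\), have strictly interlacing zeros \cite[Theorems~3.3.1--3.3.2]{szego1939}. A strictly increasing change of variables preserves the order of the zeros, so the zeros of \(B_n\) and \(B_{n+1}\) strictly interlace on \((-\infty,0)\). This is exactly the \(\kappa=0\) case of \cref{cor:trapezoid-fixed-offset-interlacing}, and I would cite that corollary directly rather than repeat the argument.

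For the Sturm-sequence reformulation, I would add that all the \(B_n\) have positive leading coefficients and strictly interlacing real zeros. By the Hermite--Kakeya--Obreschkoff theorem, every consecutive pair is then a Sturm pair. This also follows from the three-term recurrence obtained from the kernel of \cref{prop:trapezoid-qone-kernel}, namely the recurrence of \cref{cor:rank-generating}.

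The main obstacle is bookkeeping rather than substance. We must check that the indexing \(n=r-1\) and the Jacobi parameters \(\alpha=0\), \(\beta=B-A=0\) are correct, and that the Möbius change of variables sends the zeros into the negative axis without losing any at \(t=1\) or at infinity. Both points are settled by the degree count and the monotonicity of the change of variables described above.
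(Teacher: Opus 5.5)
Your proposal is correct and is essentially the paper's proof: the Legendre identity \(B_n(t)=(1-t)^nP_n\bigl(\tfrac{1+t}{1-t}\bigr)\) (the \(A=B=n\) case of the Jacobi representation), combined with the classical simple, strictly interlacing zeros of consecutive Legendre polynomials, transported through the strictly increasing map \(t\mapsto(1+t)/(1-t)\) from \((-\infty,0)\) onto \((-1,1)\). Your explicit degree count (with \(B_n(1)=\binom{2n}{n}\neq0\)) spells out what the paper takes from \cref{cor:real-rooted}, your identification with the \(\kappa=0\) case of \cref{cor:trapezoid-fixed-offset-interlacing} is accurate, and the Hermite--Kakeya--Obreschkoff and recurrence remarks are optional extras beyond the paper's informal Sturm-sequence rephrasing.
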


\begin{proof}
The Legendre identity used in \cref{cor:real-rooted} gives
\[
  B_n(t)=(1-t)^nP_n\!\left(\frac{1+t}{1-t}\right).
\]
The fractional-linear map \(t\mapsto(1+t)/(1-t)\) is strictly increasing
from \((-\infty,0)\) onto \((-1,1)\).  Consecutive Legendre polynomials have
simple zeros which strictly interlace in \((-1,1)\), by the classical zero
and interlacing theorems for orthogonal polynomials with a common positive
measure \cite[Theorems~3.3.1--3.3.2]{szego1939}.  Transporting these zeros through
the increasing map proves the claim.
\end{proof}

\begin{corollary}[Rank generating function and recurrence]
\label{cor:rank-generating}
With \(B_n(t)\) as in \cref{cor:rank-interlacing}, one has
\[
  \sum_{n\geq0}B_n(t)z^n
  =
  \frac{1}{\sqrt{1-2(1+t)z+(1-t)^2z^2}}.
\]
Equivalently, \(B_0(t)=1\), \(B_1(t)=1+t\), and, for \(n\geq1\),
\[
  (n+1)B_{n+1}(t)
  =(2n+1)(1+t)B_n(t)-n(1-t)^2B_{n-1}(t).
\]
\end{corollary}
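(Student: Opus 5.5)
The plan is to derive the generating function from the Legendre representation already used in \cref{cor:real-rooted}, and then read off the recurrence from the classical three-term recurrence for Legendre polynomials. Recall $B_n(t)=(1-t)^nP_n(x)$ with $x=(1+t)/(1-t)$. The Legendre generating function is
\[
  \sum_{n\geq0}P_n(x)w^n=\frac{1}{\sqrt{1-2xw+w^2}}.
\]
We substitute $w=(1-t)z$, which gives
\[
  \sum_{n\geq0}B_n(t)z^n=\frac{1}{\sqrt{1-2(1+t)z+(1-t)^2z^2}}.
\]
This is an identity of formal power series in $z$ over $\mathbb Q[t]$, since both sides are polynomial in $t$ coefficientwise and agree for $t\neq1$. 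We choose the branch with constant term $1$.

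To avoid any analytic worry at $t=1$, I would alternatively prove the generating function directly. Write $\sum_k\binom nk^2t^k=\sum_k\binom nk\binom n{n-k}t^k$, which is the coefficient of $y^n$ in $(1+y)^n(t+y)^n$ after a harmless rearrangement. Then sum over $n$ and extract the diagonal by residues: the result is the coefficient of $y^0$ in $\sum_n z^n\bigl((1+y)(t+y)/y\bigr)^n$, and the standard diagonal computation yields $1/\sqrt{(1-(1+t)z)^2-4tz^2}$. Expanding gives $(1-(1+t)z)^2-4tz^2=1-2(1+t)z+(1-t)^2z^2$, confirming the formula. Either route is acceptable; I would present the Legendre substitution and mention that the polynomial identity extends to $t=1$ by continuity in $t$ (or by polynomial identity of coefficients).

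For the recurrence, let $G(z)$ be the series and $Q(z)=1-2(1+t)z+(1-t)^2z^2$, so that $G=Q^{-1/2}$. Logarithmic differentiation gives
\[
  Q\,G'=\bigl((1+t)-(1-t)^2z\bigr)G.
\]
Comparing coefficients of $z^n$ yields
\[
  (n+1)B_{n+1}-2(1+t)nB_n+(1-t)^2(n-1)B_{n-1}=(1+t)B_n-(1-t)^2B_{n-1},
\]
which rearranges to the stated three-term recurrence. The initial values $B_0=1$ and $B_1=1+t$ come from the definition. The converse direction of "equivalently" holds because the recurrence and the initial values determine the sequence uniquely, and the series $G$ satisfies the same first-order ODE with $G(0)=1$.

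The only delicate point is justifying the Legendre substitution formally, namely the branch choice and the value $t=1$. I expect this step to be the main obstacle, and I would handle it through the polynomial-coefficient argument above.
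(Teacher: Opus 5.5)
Your proposal is correct and follows the paper's route for the generating function: the Legendre generating function with $x=(1+t)/(1-t)$ and $u=(1-t)z$. For the recurrence, you differentiate $G=Q^{-1/2}$ to get $Q\,G'=\bigl((1+t)-(1-t)^2z\bigr)G$ instead of multiplying the Legendre three-term recurrence by $(1-t)^{n+1}$ as the paper does. That is the same computation one level down, and it makes the converse half of ``equivalently'' explicit through uniqueness of the formal solution with $G(0)=1$. The point you flag as delicate is harmless, and no continuity argument at $t=1$ is needed: the substitution is valid in $\mathbb{Q}(t)[[z]]$, and both sides have coefficients in $\mathbb{Q}[t]$. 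Your constant-term alternative, expanding $1/(a-z(y+t/y))$ with $a=1-(1+t)z$, also gives a complete Legendre-free proof.
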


\begin{proof}
The Legendre generating function
\[
  \sum_{n\geq0}P_n(x)u^n=(1-2xu+u^2)^{-1/2}
\]
and the identity in \cref{cor:rank-interlacing}, with
\(x=(1+t)/(1-t)\) and \(u=(1-t)z\), give the displayed generating
function.  The recurrence follows in the same way from
\[
  (n+1)P_{n+1}(x)=(2n+1)xP_n(x)-nP_{n-1}(x),
\]
after multiplying by \((1-t)^{n+1}\).
\end{proof}

\begin{corollary}[Asymptotic zero distribution]
\label{cor:zero-distribution}
Let \(\tau_{n,1},\ldots,\tau_{n,n}\) be the zeros of \(B_n(t)\), counted
without repetition.  For every continuous compactly supported function
\(f\) on \((-\infty,0)\),
\[
  \lim_{n\to\infty}\frac1n\sum_{j=1}^{n}f(\tau_{n,j})
  =
  \frac1\pi\int_{-\infty}^{0}
      \frac{f(t)}{\sqrt{-t}\,(1-t)}\,dt .
\]
Thus the rank-shifted type-\(B/C\) zero measures have the explicit
push-forward of the arcsine law as their limiting density.
If the zeros are ordered increasingly and \(j_n/n\to u\in(0,1)\), then
\[
  \tau_{n,j_n}\longrightarrow
  -\cot^2\!\left(\frac{\pi u}{2}\right).
\]
\end{corollary}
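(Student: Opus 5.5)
We reduce the statement to the arcsine law for Legendre zeros, following the template already used in \cref{cor:trapezoid-fixed-offset-zero-law}. By the Legendre identity from \cref{cor:real-rooted},
\[
  B_n(t)=(1-t)^nP_n\!\left(\frac{1+t}{1-t}\right),
\]
and no zeros are lost or created at \(t=1\), since \(B_n(1)=\binom{2n}{n}\neq0\). The zeros \(\tau_{n,j}\) are therefore exactly the images \(\tau_{n,j}=(x_{n,j}-1)/(x_{n,j}+1)\) of the \(n\) simple zeros \(x_{n,j}\in(-1,1)\) of \(P_n\). The map \(\phi(x)=(x-1)/(x+1)\) is a strictly increasing homeomorphism from \((-1,1)\) onto \((-\infty,0)\). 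Hence, for \(f\in C_c((-\infty,0))\), the function \(g=f\circ\phi\) lies in \(C_c((-1,1))\), and the left-hand side equals \(\frac1n\sum_j g(x_{n,j})\). It therefore suffices to show that the normalized zero-counting measures of \(P_n\) converge vaguely on \((-1,1)\) to the arcsine law \(dx/(\pi\sqrt{1-x^2})\).

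For this convergence we will write \(x_{n,j}=\cos\theta_{n,j}\) and use the Darboux asymptotic for Jacobi polynomials with \(\alpha=\beta=0\) \cite[Theorem~8.21.8]{szego1939}. Uniformly on \(\theta\in[\epsilon,\pi-\epsilon]\), this asymptotic reads
\[
  P_n(\cos\theta)=\sqrt{\frac{2}{\pi n\sin\theta}}\Bigl(\cos\bigl((n+\tfrac12)\theta-\tfrac\pi4\bigr)+O(n^{-1})\Bigr).
\]
It implies that the number of zero angles in any subinterval \([a,b]\subset(0,\pi)\) is \(n(b-a)/\pi+O(1)\). Alternatively, we can quote the classical bounds \((j-\tfrac12)\pi/(n+\tfrac12)\leq\theta_{n,j}\leq j\pi/(n+\tfrac12)\) for Legendre zeros, which give this count directly and uniformly.

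Because \(g\) has compact support in \((-1,1)\), only angles in a fixed compact subinterval of \((0,\pi)\) contribute. Hence
\[
  \frac1n\sum_j g(\cos\theta_{n,j})\to\frac1\pi\int_0^\pi g(\cos\theta)\,d\theta.
\]
The change of variables \(x=\cos\theta\) turns the right-hand side into the arcsine integral. We then push forward under \(\phi\). With \(x=(1+t)/(1-t)\), we have \(dx=2\,dt/(1-t)^2\) and \(\sqrt{1-x^2}=2\sqrt{-t}/(1-t)\), so the arcsine density becomes \(1/(\pi\sqrt{-t}(1-t))\). This is the displayed density, matching the computation in \cref{cor:trapezoid-fixed-offset-zero-law}.

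For the quantile statement, the limiting distribution function is
\[
  F(t)=\int_{-\infty}^{t}\frac{ds}{\pi\sqrt{-s}(1-s)}=\frac2\pi\arctan\!\left(\frac1{\sqrt{-t}}\right),
\]
which can be checked with the substitution \(s=-v^2\). Since \(F\) is continuous and strictly increasing onto \((0,1)\), vague convergence of probability measures with no escaping mass gives \(j_n/n\to u\Rightarrow\tau_{n,j_n}\to F^{-1}(u)\). Mass could escape only to \(0\) or \(-\infty\); this is ruled out because the limit is a probability measure and all measures are probability measures. Solving \(F(t)=u\) gives \(1/\sqrt{-t}=\tan(\pi u/2)\), that is, \(t=-\cot^2(\pi u/2)\).

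The main obstacle is the uniform angle-count step. It requires either the Darboux asymptotic with its error control, or the explicit Legendre zero bracketing that makes the Riemann-sum argument immediate. Everything else consists of routine change-of-variables bookkeeping.
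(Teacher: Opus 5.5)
Your proposal is correct and follows essentially the same route as the paper. Both arguments transport the zeros of $P_n$ through $t=(x-1)/(x+1)$ via the Legendre identity, take the arcsine law for Legendre zero angles from the Darboux asymptotic (Szeg\H{o}, Theorem~8.21.8), push the density forward, and read off the quantiles from $F(t)=\frac{2}{\pi}\arctan(1/\sqrt{-t})$.

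One small point: the Darboux asymptotic alone gives only the sign-change lower bound $n(b-a)/\pi-O(1)$ for the zero angles in $[a,b]$. The matching upper bound comes either from complementary counting against the total of $n$ zeros, which gives an $o(n)$ error and is all you need (this is what the paper's ``arbitrarily small proportion at the endpoints'' remark does), or from the Bruns bounds you also quote.
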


\begin{proof}
The normalized zero-counting measures of the Legendre polynomials
\(P_n\) converge to the arcsine measure
\[
  \frac{dx}{\pi\sqrt{1-x^2}},\qquad -1<x<1,
\]
see \cite[Theorem~8.21.8]{szego1939} and the bulk-angle argument in the proof
of \cref{cor:trapezoid-fixed-offset-zero-law}.  By the identity in
\cref{cor:rank-interlacing}, the zeros of \(B_n\) are the images of the
zeros of \(P_n\) under
\[
  t=\frac{x-1}{x+1}.
\]
The change of variables \(x=(1+t)/(1-t)\) gives
\[
  \frac{dx}{\pi\sqrt{1-x^2}}
  =
  \frac{dt}{\pi\sqrt{-t}\,(1-t)}
\]
on \((-\infty,0)\).  Applying the zero-distribution convergence to the
pull-back of \(f\) proves the formula.  The limiting distribution function is
\[
  F(t)=\frac{2}{\pi}\arctan\!\frac{1}{\sqrt{-t}},
  \qquad t<0,
\]
which is continuous and strictly increasing.  Quantile convergence for the
ordered zeros therefore gives \(F^{-1}(u)=-\cot^2(\pi u/2)\).
\end{proof}

For completeness, the flag interpretation also yields a three-rank
topological refinement.  Recall that \(H=2r-2\).  For
\(0\leq a<b<c\leq H-1\), let \(L_a\) be rank
\(a\) of \(P_r\), set \(f_a=|L_a|\), let \(e_{ab}\) count comparable pairs
in \(L_a\times L_b\), and let \(n_{abc}\) count chains \(x<y<z\) through
the three ranks.

\begin{corollary}
\label{cor:three-rank}
Define
\[
  \beta_{abc}
  =
  n_{abc}-e_{ab}-e_{ac}-e_{bc}+f_a+f_b+f_c-1.
\]
Then
\[  [t^3]\HH_{P_r,\rank}(q,t)
  =
  \sum_{0\leq a<b<c\leq H-1}
  \beta_{abc}q^{a+b+c},
  \qquad
  \beta_{abc}\geq 0.
\]
Moreover, \(\beta_{abc}\) is the reduced Euler characteristic of the order
complex induced on \(L_a\cup L_b\cup L_c\).
\end{corollary}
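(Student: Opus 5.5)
The plan is to read off the \(t^3\)-coefficient directly from the shifted flag expansion of \cref{prop:shifted-flag-expansion}. Positivity will then come from the EL-labelling of \(\wideL_r\), and the topological statement from the standard identification of a flag \(h\)-number with a reduced Euler characteristic.

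\emph{Step 1 (indexing).} The augmented poset \(\wideP=\wideL_r\) has ranks \(0,\ldots,H+1\). An element of \(P_r\) of rank \(a\) has rank \(a+1\) in \(\wideP\). By \cref{cor:cell-ranks} the top of \(\wideP\) is the unique maximum \((1,2r-1)\), of rank \(H+1\). The internal ranks \([H]\) of \(\wideP\) therefore correspond exactly to the ranks \(0,\ldots,H-1\) of \(P_r\). In \cref{eq:shifted-flag-expansion}, the coefficient of \(t^3\) collects the sets \(S=\{a+1,b+1,c+1\}\) with \(0\le a<b<c\le H-1\). Each such set carries the weight \(q^{\qsum{S}-|S|}=q^{a+b+c}\). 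Hence
\[
[t^3]\HH_{P_r,\rank}=\sum_{0\le a<b<c\le H-1}\beta_{\wideP}(\{a+1,b+1,c+1\})\,q^{a+b+c}.
\]
Distinct triples give distinct sets \(S\), so no regrouping is needed.

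\emph{Step 2 (evaluating \(\beta_{\wideP}\)).} Apply \cref{eq:flag-h-definition} to these sets. A chain of \(\wideP\) containing \(\bottom\) and the top, with internal rank set \(T\subseteq\{a+1,b+1,c+1\}\), is the same thing as a chain of \(P_r\) with one element in each rank of \(T\) shifted back by one. This gives the following values of \(\alpha_{\wideP}\):
\begin{itemize}
\item \(n_{abc}\) for the full triple;
\item \(e_{ab},e_{ac},e_{bc}\) for the pairs;
\item \(f_a,f_b,f_c\) for the singletons;
\item \(1\) for \(T=\varnothing\), the chain \(\bottom<\widehat1\).
\end{itemize}
The alternating sum is exactly the displayed \(\beta_{abc}\).

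\emph{Step 3 (positivity and topology).} By \cref{prop:explicit-el}, \(\wideL_r\) carries an EL-labelling, hence an \(R\)-labelling. \Cref{thm:r-labelling} then gives \(\beta_{abc}\ge 0\); in fact it counts the maximal chains whose descent set is \(\{a+1,b+1,c+1\}\), which by \cref{lem:chain-words} are the words \(w\in\mathcal B_r\) with \(D(w)=\{a+1,b+1,c+1\}\). For the Euler-characteristic claim, the order complex of the subposet induced on \(L_a\cup L_b\cup L_c\) has faces of three kinds:
\begin{itemize}
\item vertices, counted by \(f_a+f_b+f_c\);
\item edges, which are comparable pairs in distinct ranks and are counted by \(e_{ab}+e_{ac}+e_{bc}\), since elements of the same rank are incomparable;
\item triangles, counted by \(n_{abc}\).
\end{itemize}
Its reduced Euler characteristic is therefore \(-1+(f_a+f_b+f_c)-(e_{ab}+e_{ac}+e_{bc})+n_{abc}=\beta_{abc}\).

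Nothing here is deep. The only place needing care is the rank bookkeeping in Step 1: the internal ranks of \(\wideP\) must match the ranks of \(P_r\) shifted by one, and the top of \(\wideP\), being the maximum of \(P_r\) at rank \(H\), must be excluded so that the range is \(c\le H-1\) and not \(c\le H\).
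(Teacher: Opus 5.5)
Your proposal is correct and takes essentially the same approach as the paper. The paper also applies Boolean inversion of \cref{eq:flag-h-definition} at \(\{a+1,b+1,c+1\}\), combines it with the shifted flag expansion \cref{eq:shifted-flag-expansion}, gets nonnegativity from the descent count \cref{eq:descent-count}, and counts faces of the three-rank order complex; your explicit rank bookkeeping (internal ranks of \(\wideP\) matching ranks \(0,\ldots,H-1\) of \(P_r\), with the top excluded) spells out what the paper leaves implicit.
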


\begin{proof}
Apply Boolean inversion in \cref{eq:flag-h-definition} to the set
\(\{a+1,b+1,c+1\}\), then use
\cref{eq:shifted-flag-expansion}.  The induced three-rank order complex has
\(f_a+f_b+f_c\) vertices, \(e_{ab}+e_{ac}+e_{bc}\) edges, and \(n_{abc}\)
triangles, giving the Euler-characteristic formula.  Nonnegativity follows
from the descent interpretation in \cref{eq:descent-count}.  The Euler
interpretation alone would not determine the sign.
\end{proof}

\section{Further remarks}
\label{sec:discussion}

\subsection{Position in the literature}

The order-theoretic structure of positive roots, including their Hasse
diagrams and order ideals, has been studied extensively; a useful
general reference is Panyushev \cite{panyushev2006}.  Generalized Catalan
enumeration for Weyl groups and root-poset antichains is developed in
\cite{athanasiadis2004}.  The statistic considered here is different: it
weights weak chains by the sum of their root-poset ranks.

Chapoton introduced this \(q\)-Zeta invariant, established the bounded flag
expansion, and observed that the type-\(B\) numerator appeared to have
nonnegative coefficients which, as polynomials in \(q\), specialise at
\(q=1\) to squares of binomial coefficients \cite{chapoton2024}.
He also described a directional \(R\)-labelling of the unaugmented
type-\(B\) root poset; the bounded-poset positivity criterion does not
apply to that poset directly \cite[Section~5.2]{chapoton2024}.
Here the signed EL-labelling extends across an adjoined minimum, and the
minimum-adjunction identity returns the required numerator with its original
rank normalization.  Under the
Lie-rank convention of this paper, the proved specialization is
\([t^k]\HH_{P_r,\rank}(1,t)=\binom{r-1}{k}^2\).  The descent formula used
above goes back to Stanley's work on Jordan--H{\"o}lder sets
\cite{stanley1974}.  Bj\"orner's EL-labelling framework packages intervalwise
increasing chains together with lexicographic shellings \cite{bjorner1980}.
In the present setting, the signed endpoint labelling makes the relevant
descent enumerator completely explicit and leads to the ballot-word formula
in \cref{thm:main}.  Under the rank and denominator conventions fixed above,
this gives an all-rank proof of the type-\(B\) positivity statement
highlighted in Chapoton's discussion.

Recent work relates classical zeta polynomials of lattice-point posets to
Ehrhart polynomials of preorder polytopes \cite{chapotonathanasiadis2026}.
That unrefined Ehrhart--zeta duality is complementary to our rank-sensitive
\(q\)-Zeta positivity theorem and its explicit Ferrers-cell EL/ballot model.

The endpoint expressions have a direct counterpart in Chu's
$q$-ballot polynomials.  Write $B^+_{m,n}(q,x)$ for his enumerator of
subdiagonal paths with $m$ horizontal and $n$ vertical steps, where $x$
marks upper corners and $q$ marks the sum of their coordinate sums.
For $0\le a\le b$, the polynomial in \cref{thm:trapezoid-endpoint} satisfies
\[
  \Phi_{a,b}^{\,b-a}(x,q)=B^+_{b,a}(q^{-1},xq^{a+b}).
\]
Indeed, reversing the move word and taking $R$ as a horizontal step and
$L$ as a vertical step sends an $RL$ descent at position $i$ to an upper
corner with coordinate sum $a+b-i$.  The right-hand side is Chu's dual
upper-corner enumerator; its Gaussian-binomial formula is
\cite[Equation~(10a) and Corollary~6, Equation~(17a)]{chu2017qballot}.
The empty and one-direction paths contribute $1$ directly.
Thus this endpoint formula is classical; the cell model identifies its
role in Chapoton's rank-sensitive chain enumerator.
Classical two-boundary path formulas by Krattenthaler and Mohanty also count
descents and major index \cite{krattenthaler1993}; the Ferrers theorem here
uses the same path language but derives the \(q\)-weight from the poset's fixed
denominator normalization.
The resulting refinement is also adjacent to the descent-refined major-index
enumeration of two-row standard Young tableaux, for which a
Gaussian-binomial difference is given in \cite[Theorem~2]{keith2019}.
Classical path enumeration supplies the endpoint expressions;
the contribution here is their realization within the fixed-normalization
\(q\)-Zeta invariant through the Ferrers-cell labelling and interval structure.

\Cref{thm:trapezoid-family} isolates the part of
the argument that is genuinely order-theoretic.  Widening the right boundary
from \(2r-i\) to \(s-i\) preserves the lattice and EL structures, while the
ballot condition records exactly which starting atoms remain available.  Thus
the type-\(B/C\) root-poset result is the balanced member of a larger positive
family, providing a reusable test class for the positivity question in
Chapoton's \(q\)-Zeta theory rather than a one-off root-system computation.
The new slice-degree corollary shows that this extension retains sharp
coefficient-level control: the extremal \(q\)-weight depends explicitly on the
width parameter \(s\), while the leading monomial remains rigid.
The same family admits a closed specialization
\(\HH_{T_{r,s},\rho}(1,1)=\binom{s-2}{r-1}\), which quantifies the total
chain contribution as the boundary widens.
The endpoint decomposition in \cref{thm:trapezoid-endpoint} complements
the two extreme slice formulas: after fixing the numbers of left and right
steps, the reversed-ballot condition becomes a single linear boundary, and
the \(q\)-Zeta weight is exactly the ordinary major index of the move word.
Krattenthaler--Mohanty's one-boundary formula therefore supplies a closed
Gaussian-binomial expression for every intermediate slice, while the
initial-left-run correction accounts for the augmented atom.  This correction
and the sum over starting atoms turn the classical corner enumerators into
the full \(q\)-Zeta numerator.
\Cref{thm:trapezoid-interval-endpoint} shows that this
closed form is genuinely local as well: once the lower endpoint is nonbottom,
all dependence on the ambient trapezoid is compressed into the displacement
and the single boundary slack \(\delta\).  At \(q=1\), the numerator at
\(t=1\) is the reflection-principle difference
\(\binom{\ell+u}{\ell}-\binom{\ell+u}{\ell+\delta+1}\), so the local \(q\)-Zeta
invariant interpolates between a Gaussian major-index polynomial and a
classical ballot count.
Together with \cref{thm:trapezoid-principal-endpoint}, this gives a closed
formula for every interval of the augmented trapezoidal lattice, including
those whose lower endpoint is the adjoined minimum.  The latter are finite
sums over admissible starting atoms, so no additional boundary case is hidden
in the interval statement.

The \(q=1\) profile admits a complementary generating-function description.
The rational kernel in \cref{prop:trapezoid-qone-kernel} places the family
beside the generalized Narayana polynomials whose real-rootedness and
interlacing were studied by Chen, Yang, and Zhang
\cite{chenyangzhang2016}.  The identification is not literal: their
subtracted two-parameter Narayana polynomial is a different statistic, whereas
our \(J_{A,B}\) arises as the \(q=1\) specialization of a rank-sensitive
\(q\)-Zeta numerator and carries a Gaussian-binomial \(q\)-lift before
specialization.  The rational kernel therefore supplies a precise bridge to
that literature without conflating the two enumerators.

The same kernel has a useful fixed-offset consequence that is not visible from
a single trapezoid.  Constant-term extraction gives the algebraic series in
\cref{prop:trapezoid-qone-fixed-offset-generating}; in particular, every fixed
offset \(J_{n,n+\kappa}(t)\) has the common exponential rate
\((1+\sqrt t)^2\) for \(t\geq0\).  Thus the finite-rank Jacobi picture and
the two-parameter recurrence are compatible with an exact algebraic growth law
throughout the fixed-offset regimes, providing a direct entry point for further
asymptotic questions.

The Ferrers-cell theorem \cref{thm:ferrers-family} identifies a
boundary-parametrized family containing these examples. The boundary need only be
nonincreasing; no linearity or root-system input is required. The \(q\)-Zeta
numerator is then a positive descent enumerator over the admissible cell paths,
so the root-poset and trapezoidal formulas are specializations of one
boundary-stable mechanism.  In particular, the standard type-\(A_r\) root
poset is the constant-boundary member \(F_{(r,\ldots,r)}\), so the same
mechanism covers all three classical families \(A\), \(B\), and \(C\) in one
coordinate model.  Moreover, the same labelling restricts to every
interval, giving the interval-stable positivity statement in
\cref{cor:ferrers-interval-positivity}; positivity is therefore a local
property throughout this lattice family, not only a global feature of its top
interval.  Thus the theorem gives an explicit infinite family of bounded
graded, non-distributive lattices (\(r\ge3\)) realizing Chapoton's
\(R\)-labelling positivity criterion intervalwise.
The shape-monotonicity corollary adds a comparison principle within this
family.  At fixed top width, moving the lower boundary outward
does not alter the weight of any existing path; it only admits additional
paths.  The resulting coefficientwise order is therefore stronger than a
comparison after specializing \(q\) or \(t\).
The first-slice identity is a complementary sharpness statement: regardless
of the later-run interactions, all one-run contributions separate by their
starting row and sum to \(q\)-integers determined by the boundary.
At \(q=1\), the type-\(B/C\) Narayana specialization is not only real-rooted
but has the explicit \(\gamma\)-coefficients
\(\binom{r-1}{2j}\binom{2j}{j}\), making its symmetry and unimodality
transparent in the standard \(\gamma\)-basis.
The upper-degree slice formula is the other extreme in the run decomposition: all left
runs are forced to have length one, while the allowed cumulative right-run
positions form a flagged strict sequence.  For the trapezoidal boundary this
flag becomes rectangular after subtracting the index, explaining the Gaussian
binomial specialization.
The interval version shows that this is not an artifact of the top element:
for every nonbottom interval, the coefficient at the displacement-based
upper bound on the power of \(t\) is controlled by the same flagged
strict-sequence mechanism.  This coefficient may be zero; the flags give an
exact nonvanishing criterion.
The envelope corollary complements this local statement globally: for every
fixed descent number, the rectangular boundary gives a finite Gaussian-binomial
convolution that bounds all lower Ferrers shapes coefficientwise.

The \(q=0\) edge also has a classical order-theoretic shadow.  Applying
Chapoton's characteristic-polynomial identity to the augmented lattice and
using the initial-descent part of the ballot model gives
\(X_{\widehat F_{\mathbf b}}(y)=y^{b_1}-r y^{b_1-1}+(r-1)y^{b_1-2}\).
The final term is absent when \(r=1\).  Thus all intermediate rows disappear
from this Möbius invariant, even though
they affect the positive \(q\)-Zeta numerator.  This separates the boundary data
seen by the rank-sensitive refinement from the much coarser characteristic
polynomial and provides a useful consistency check on the EL description.
Characteristic polynomials also occur in the arrangement theory of ideals of
classical root systems \cite{tran2019}; that literature concerns hyperplane and
toric arrangements, whereas \(X_{\widehat F_{\mathbf b}}\) is the Möbius
polynomial of the augmented root-poset lattice itself.  The two
specializations should therefore not be conflated.

\subsection{Finite verification}
\label{sec:computational-checks}

As an independent exact-arithmetic check, we generated the three families
of positive roots of \(B_r\) in simple-root coordinates, matched them
bijectively to \(C_r\), and verified the coordinate order, the join and
meet formulae, and the lower-semimodular rank inequality for
\(1\leq r\leq 10\).  The same check enumerates every interval of
\(\wideL_r\) to verify the EL property, computes the \(q\)-Zeta numerator
directly from the defining weak-chain sums, and matches it against the
ballot-word formula, again for \(1\leq r\leq 10\).  For these ranks the
word counts agree with \(\binom{2r-2}{r-1}\), the sharp top
\(t\)-coefficient agrees with \cref{cor:sharp-degree}, and the coefficients
of \(t^k\) at \(q=1\) agree with \(\binom{r-1}{k}^{2}\).  The small-rank
polynomials displayed in \cref{sec:introduction} were checked separately.

The separate weak-chain calculation stores the polynomial
\(E_m(x)\) for chains of length \(m\) ending at \(x\), using
\[
  E_1(x)=q^{\rho(x)},\qquad
  E_{m+1}(x)=q^{\rho(x)}\sum_{y\leq x}E_m(y).
\]
It then forms \(A_0=1\) and \(A_m=\sum_x E_m(x)\) for
\(1\leq m\leq H+1\), where \(H\) is the maximum rank, and multiplies
the resulting series by the fixed denominator in
\cref{eq:numerator-definition}.  This calculation does not use the path
bijection.  The flag expansion and minimum-adjunction identity bound the
numerator degree by \(H\), so coefficients through that degree determine
the numerator; the coefficient of \(t^{H+1}\) is also checked to be zero.
The proof for arbitrary rank is given in
\cref{sec:augmentation,sec:lattice,sec:main-proof}; these computations
provide separate checks in the stated finite range.

\subsection{Further questions}

The Ferrers-cell formulas suggest two further questions.  Under the same
rank and denominator conventions, are the \(q\)-Zeta numerators of the
type-\(D\) and exceptional positive-root posets coefficientwise nonnegative,
and can they be expressed by a positive path formula?
More generally, which finite graded posets with a unique maximum have
coefficientwise nonnegative \(q\)-Zeta numerators under the same normalization?
The interval labellings above give a concrete sufficient condition, but not
a characterization.

\section{Conclusion}
\label{sec:conclusion}

The \(q\)-Zeta numerators of the type-\(A_r\), type-\(B_r\), and type-\(C_r\)
positive-root posets are instances of one Ferrers-cell descent enumerator.
An exact minimum-adjunction identity and a signed EL-labelling prove
coefficientwise positivity in every Lie rank.  For the balanced type-\(B/C\)
family, the formula also gives the \(q=1\) Narayana-square specialization,
sharp slice degrees, and unique leading monomials.
The two-parameter trapezoidal theorem extends the
mechanism beyond the balanced root-poset boundary and yields the total
specialization \(\binom{s-2}{r-1}\) together with the extremal degree
\(k(s-k-3)\).

The Ferrers theorem isolates the universal order-theoretic content: arbitrary
nonincreasing boundaries, all bounded intervals, and coefficientwise shape
monotonicity are covered by the same admissible path-word model.  The first
slice is the row statistic \(\sum_{i=2}^{r}[b_i-i+1]_q\), the possibly
vanishing \(t^{r-1}\)-slice is a flagged strict-sequence enumerator
(Gaussian and nonzero in the trapezoidal case), and every
intermediate slice obeys the rectangular Gaussian-binomial envelope.  The
two-parameter trapezoidal family also has a finite Gaussian-binomial endpoint
decomposition for every intermediate slice, with an alternating \(q\)-binomial
formula at \(t=1\).  Every nonbottom interval of the trapezoidal family has
the same one-boundary Gaussian formula after replacing the global endpoint
data by its displacement and boundary slack; its value at \((q,t)=(1,1)\) is an explicit
ballot-number difference.  Principal intervals from the adjoined minimum are finite sums of the same
Gaussian contributions, so all intervals in the augmented trapezoidal lattice
are covered by closed formulas.  At
\(q=1\) the type-\(B/C\) specialization is \(\gamma\)-positive, while the
type-\(A\) companion has the parallel negative-real-root consequence stated
in \cref{cor:type-A}; its rank sequence also satisfies a second-order
recurrence and has strictly interlacing negative zeros
(\cref{cor:type-A-spectrum}), and its normalized zero measures converge to the
same explicit density.  In addition, the Narayana-square specializations
strictly interlace from one Lie rank to the next, providing a rank-to-rank
Sturm structure that is invisible in coefficientwise positivity alone.
The associated rank generating function is algebraic and yields an explicit
three-term recurrence, making the \(q=1\) specialization directly accessible
to further asymptotic and spectral analysis.
The zero-counting measures also have a closed rank-asymptotic limit, obtained
by pushing the Legendre arcsine law through the same Cayley transform.
As an additional classical specialization, the augmented Ferrers lattice has
characteristic polynomial
\(y^{b_1}-r y^{b_1-1}+(r-1)y^{b_1-2}\), with the last term absent for
\(r=1\); its independence from the lower
boundary contrasts with the boundary-sensitive positive \(q\)-Zeta expansion.

\enlargethispage{0in}
\small
\section*{Acknowledgements}
We acknowledge Gewu Intelligence Lab for providing the
collaborative research environment in which this project was developed.

\scriptsize
\section*{Disclosure of automated assistance}
OpenAI GPT-5.6 Sol, Anthropic Claude Fable 5, and Grok 4.6 participated extensively in the mathematical development and preparation of this manuscript: formulating the Ferrers-cell model for nonincreasing boundary sequences; generating conjectures and candidate proof strategies; deriving intermediate steps for the signed EL-labellings and the reversed-ballot-word formula for Chapoton's \(q\)-Zeta numerators; working out Gaussian-binomial endpoint and interval decompositions for the trapezoidal family and flagged upper-slice formulas for general Ferrers boundaries; developing the transfer-matrix recursion and the \(q=1\) Jacobi--Legendre, zero-interlacing, generating-function, and asymptotic consequences; constructing low-rank examples; organizing the theorem and proof dependencies; and drafting and revising the exposition, LaTeX source, and bibliography. OpenAI GPT-6 Astra subsequently reviewed the derivations and citation support, checked rank and descent conventions and degenerate cases, clarified potentially vanishing upper-degree slices and the relationship with classical path formulas, and revised the manuscript and its disclosure. The workflow used iterative prompting, decomposition into smaller subproblems, comparison of candidate arguments, and repeated self-critique; the later review also included source-to-PDF checks and inspection of exact finite-verification records. The results await further verification by domain experts.

\bibliographystyle{amsplain}
\bibliography{references}

\end{document}